\title{The asymptotic growth of torsion homology for arithmetic groups}
\author{Nicolas Bergeron} 
\address{Institut de Math\'ematiques de Jussieu \\
Unit\'e Mixte de Recherche 7586 du CNRS \\
Universit\'e Pierre et Marie Curie \\
4, place Jussieu 75252 Paris Cedex 05, France \\}
\email{bergeron@math.jussieu.fr}
\urladdr{http://people.math.jussieu.fr/~bergeron}
\author{Akshay Venkatesh}
\thanks{We gratefully acknowldge funding agencies that have supported our work. A.V. was partially supported by the Sloan foundation, the Packard Foundation and by an NSF grant.}
\address{Department of Mathematics, Stanford University, Stanford CA 94304}
\email{akshay@math.stanford.edu}
 \DeclareFontFamily{OT1}{rsfs}{}
 \newcommand{\tors}{\mathrm{tors}}
\DeclareFontShape{OT1}{rsfs}{n}{it}{<-> rsfs10}{}
\DeclareMathAlphabet{\mathscr}{OT1}{rsfs}{n}{it}
\newcommand{\Cinf}{V_0}
\newcommand{\detp}{\det{}'}
\newcommand{\prodalt}{ \prod \- {}^*}
\newcommand{\atr}{\mathfrak{a}_{0\R}}
\newcommand{\Bbull}{B^{\ast}}
\newcommand{\image}{\mathrm{image}}
\newcommand{\free}{\mathrm{free}}
\newcommand{\DkE}{\bigwedge^k \mathfrak{p}^* \otimes E}
\newcommand{\mass}{\mathrm{mass}}
\newcommand{\Tr}{\mathrm{tr}}
\newcommand{\Qvbar}{\overline{\Q_v}}
\newcommand{\bx}{\mathbf{x}}
\newcommand{\tx}{\tilde{x}}
\newcommand{\Abull}{A^{\ast}}
\newcommand{\Qbar}{\bar{\Q}}
\newcommand{\Spec}{\mathrm{Spec}}
\newcommand{\LG}{{}^{L}G}
\newcommand{\C}{\mathbb{C}}
\newcommand{\Z}{\mathbb{Z}}
\newcommand{\bt}{\mathfrak{b}}
\newcommand{\muP}{\mu_{\mathrm{Planch}}}
\newcommand{\at}{\mathfrak{a}_0}
\newcommand{\T}{\mathbf{T}}
\newcommand{\Ad}{\mathrm{Ad}}
\newcommand{\sign}{\mathrm{sign}}
\newcommand{\Qpbar}{\overline{\mathbb{Q}_p}}
\newcommand{\adele}{\mathbb{A}}
\newcommand{\trace}{\mathrm{tr}{\ }}
\newcommand{\m}{\mathfrak{m}}
\renewcommand{\a}{\mathfrak{a}}
\newcommand{\p}{\mathfrak{p}}
\newcommand{\gcompact}{\mathfrak{u}}
\newcommand{\psix}{\psi_{\mathbf{x}}}
\newcommand{\Gal}{\mathrm{Gal}}
\renewcommand{\H}{\mathbf{H}}
\newcommand{\hatT}{T^{\vee}}
\newcommand{\hatB}{B^{\vee}}
\DeclareFontFamily{OT1}{rsfs}{}
\DeclareFontShape{OT1}{rsfs}{n}{it}{<-> rsfs10}{}
\DeclareMathAlphabet{\mathscr}{OT1}{rsfs}{n}{it}
\newcommand{\Q}{\mathbb{Q}}
\newcommand{\Hom}{\mathrm{Hom}}
\newcommand{\G}{\mathbf{G}}
\renewcommand{\k}{\mathfrak{k}}
\newcommand{\R}{\mathbb{R}}
\newcommand{\SO}{\mathrm{SO}}
\newcommand{\Aut}{\mathrm{Aut}}
\newcommand{\rank}{\mathrm{rank}}
 \newcommand{\test}{\mathcal{Q}}
\newtheorem{thm}[subsection]{Theorem}  
\newtheorem{lem}[subsection]{Lemma}         
\newtheorem*{lem*}{Lemma}         
\newtheorem{prop}[subsection]{Proposition}
\newtheorem*{prop*}{Proposition}
\newtheorem{conj}[subsection]{Conjecture}
\newtheorem*{lema}{Lemma}
\newtheorem{cor}[subsection]{Corollary}
\theoremstyle{definition}
\numberwithin{equation}{subsection}
\renewcommand{\H}{\mathbb H}  %
\newcommand{\cal}{\mathcal}
\newcommand{\GL}{\mathrm{GL}}
\newcommand{\SL}{\mathrm{SL}}
\newcommand{\End}{\mathrm{End}}
\newcommand{\SU}{\mathrm{SU}}
\newcommand{\PGL}{\mathrm{PGL}}
\newcommand{\vol}{\mathrm{vol}}
\begin{document}

\begin{abstract}  
When does the amount of torsion in the homology of an arithmetic group grow exponentially with the covolume?  We give many examples where this is so, and conjecture precise conditions. 
\end{abstract}
\maketitle
\tableofcontents

\section{Asymptotic torsion}

Let $\Gamma$ be a lattice in a semisimple Lie group $G$. The asymptotic behavior of the Betti numbers $ \dim H_j (\Gamma, \C)$, when $\Gamma$ varies, has been 
studied by a number of authors.   In particular, if $\Gamma_N \leqslant \Gamma$ is
a decreasing sequence of subgroups, with trivial intersection,  the quotient:
$$\frac{ \dim H_j (\Gamma_N , \C) } {[\Gamma: \Gamma_N]}$$
is known \cite{Luck2} to converge to the $j$th $L^2$-Betti number of $\Gamma$; in particular, 
this is nonzero only when the rank\footnote{Here the rank is the {\it complex} rank, i.e. the maximal dimension of a Cartan algebra, split or otherwise.}  of $G$ equals the rank of a maximal compact subgroup $K \subset G$. 
For example, this is the case when $G = \SL_2(\R)$, and $\Gamma$ is a Fuchsian groups. 

The purpose of this paper is to investigate the corresponding question when Betti numbers  
are replaced by the logarithm of the size of the torsion components of homology groups.    The motivation
comes from number theory; see \S \ref{mot-ash}. 

\subsection{}  Locally symmetric spaces associated to $\SL_2 (\C)$ are
hyperbolic $3$-manifolds.  Numerical experiments  (\cite{Grunewald}, and unpublished data computed by Calegari-Dunfield in connection with \cite{CalegariDunfield}) 
as  well as the work of Calegari-Mazur \cite{CalegariMazur} suggest that arithmetic hyperbolic $3$-manifolds should have a lot of torsion in their
homology. Here are two such results:

  \label{exa}
1. (Taylor, \cite[Theorem 4.2]{TaylorR}).  Let $n_1$ and $n_2$ be two nonnegative integers. Consider 
$S_{n_1 , n_2} = {\rm Sym}^{n_1} (\Z [i]^2) \otimes_{\Z} \overline{{\rm Sym}^{n_2} (\Z [i]^2)}$ as a 
$\SL_2 (\Z [i] )$-module. If $n_1 \neq n_2$,  there exists a finite index
(congruence) subgroup $\Gamma \subset \SL_2 (\Z [i])$ such that the homology groups
$$H_1 ( \Gamma ; S_{n_1 , n_2})$$
have a non-trivial torsion subgroup.

2. (Follows from Silver and Williams \cite{SilverWilliams}.) Let $k \subset {\Bbb S}^3$
be a hyperbolic knot whose Alexander polynomial has nontrivial Mahler measure 
(e.g. the figure eight knot). 
Denote by $M_N$ the $N$-th cyclic cover of ${\Bbb S}^3$ branched over $k$.  We 
decompose $H_1 (M_N ; \Z)$ as the direct sum of a free abelian group and a torsion subgroup
$H_1 (M_N)_{\tors}$. 

For sufficiently large $N$, the $3$-manifold $M_N$ is hyperbolic; moreover, $$\lim_{N \rightarrow +\infty} \frac{\log | H_1 (M_{N})_{\tors}|}{N} >0 .$$

\medskip

\subsection{}
It is convenient to define, for a semisimple Lie group $G$, the ``deficiency'' $\delta(G)$ to be the difference $\mathrm{rank}(G) - \mathrm{rank}(K)$.  The quantity $\delta$ is sometimes
called the fundamental rank of $G$. It equals zero
 if and only if $G$ has discrete series, or, equivalently, if and only if $G$ has a compact Cartan subgroup. 
If $S$ is the global symmetric space $G/K$, we sometimes write
$\delta(S)$ for $\delta(G)$. 

\medskip

\noindent
{\em Examples.} $\delta = 0$ for the groups $\SL_2 (\R)$, $\mathrm{SU}_{n,m}, \SO_{n,m} \ \mbox{($nm$ even)}$; $\delta  =1$ for the groups $\SL_2 (\C )$, $\SL_3(\R),  \SO_{n,m} \ \mbox{$nm$ odd}$ (this is a complete list of almost simple groups with $\delta =1$, up to isogeny); 
$\delta = 2$ for the groups $\SL_5(\R), E_6^{\mathrm{split}}$.

\medskip

Now assume that $\mathbf{G}$ is ${\Bbb Q}$-semisimple and $\Gamma \subset \mathbf{G}({\Bbb Q})$ is a congruence lattice.  Put $G = \mathbf{G}(\R)$.  We fix an ``arithmetic'' $\Gamma$-module $M$;
by this we mean that $M$ is a finite rank free $\Z$-module, and there exists an algebraic representation of $\mathbf{G}$
on $M \otimes \Q$ so that $\Gamma$ preserves $M$. 
We assume $\mathbf{G}$ is anisotropic over ${\Bbb Q}$ (equivalently $\Gamma$ is cocompact in $G$), and consider 
  a decreasing sequence of congruence subgroups
  $\Gamma_N \subset \Gamma_{N-1} \subset \dots \subset \Gamma$ 
with the property that $\cap_N \Gamma_N = \{1\}$. 

\begin{conj}  \label{conjmain}   The limit
$$\lim_{N} \frac{\log |H_j (\Gamma_N , M )_{\tors}|  }{[\Gamma: \Gamma_N]}$$
exists for each $j$ and is zero {\em unless} $\delta(S) = 1$ and $j = \frac{\dim(S) - 1}{2}$. 
In that case, it is always positive and equal to a positive constant $c_{G,M}$ (explicitly described in \S \ref{sl3})
times the volume of $\Gamma \backslash S$. 
\end{conj}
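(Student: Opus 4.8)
The plan is to analyze the torsion through the Cheeger--Müller theorem relating analytic and Reidemeister torsion, reducing the asymptotic statement to a question about $L^2$-analytic torsion. First I would fix a $\Gamma$-invariant inner product on $M\otimes\R$ and recall that for the compact locally symmetric space $Y_N = \Gamma_N\backslash S$ one has the Reidemeister torsion $\tau(Y_N, M)$, which by Cheeger--Müller equals the Ray--Singer analytic torsion $T(Y_N, M)$ up to a correction term involving the regulator (the covolume of $H_\ast(\Gamma_N, M)_{\free}$ in $H_\ast(Y_N, M\otimes\R)$ with respect to harmonic and combinatorial metrics) and the orders of the torsion subgroups $|H_j(\Gamma_N, M)_{\tors}|$. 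Thus $\log T(Y_N, M)$ is, up to the regulator contribution and Euler-characteristic-type bookkeeping, an alternating sum $\sum_j (-1)^j j \log |H_j(\Gamma_N, M)_{\tors}|$.

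Next I would compute the leading asymptotics of $\log T(Y_N, M)$. Since analytic torsion is defined spectrally via zeta-regularized determinants of Laplacians on $M$-valued forms, and the $Y_N$ converge to $S$ in the sense of Benjamini--Schramm (a consequence of $\cap_N\Gamma_N = \{1\}$ and the congruence/cocompactness hypotheses), one expects
$$\frac{\log T(Y_N, M)}{\vol(Y_N)} \longrightarrow t^{(2)}_S(M),$$
where $t^{(2)}_S(M)$ is the $L^2$-analytic torsion of the universal cover $S$ with coefficients in $M$, computed from the Plancherel measure of $G$. This step requires a spectral-gap / uniform-discreteness input to control the small eigenvalues and a heat-kernel comparison to pass from the sequence $Y_N$ to the limiting symmetric space; these are the standard ingredients in limit-multiplicity and approximation results.

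The decisive structural point is then a Lie-theoretic computation: the $L^2$-torsion $t^{(2)}_S(M)$ vanishes precisely when $\delta(S)\neq 1$, and is nonzero (with a definite sign, so of constant sign along the sequence) exactly when $\delta(S) = 1$. When $\delta(S) = 0$ this is the classical vanishing of $L^2$-torsion for spaces admitting an isometry reversing orientation on a suitable factor (equal ranks); when $\delta(S)\geq 2$ a parity/symmetry argument on the Plancherel integrand forces cancellation. For $\delta(S) = 1$ the relevant cohomological degree is forced to be $(\dim S - 1)/2$ by the structure of $(\mathfrak{g}, K)$-cohomology of the (essentially unique) cohomological tempered representation, which pins down the single $j$ in the statement.

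The main obstacle I anticipate is \emph{separating the torsion from the regulator}: Cheeger--Müller gives control only of the combination $\sum_j(-1)^j j\bigl(\log|H_j(\Gamma_N,M)_{\tors}| - \log R_j(\Gamma_N,M)\bigr)$, so to conclude exponential growth of torsion one must show the regulators $R_j(\Gamma_N, M)$ are subexponential in $[\Gamma:\Gamma_N]$ (or bound Betti numbers and the heights of harmonic representatives), and one must handle the alternating sum to extract the single degree $j = (\dim S - 1)/2$ rather than a signed combination. When $M$ is chosen so that $H_\ast(Y_N, M\otimes\Q)$ is forced to vanish in a range (a "strongly acyclic" coefficient system, available for many $M$), the regulator disappears entirely and the argument becomes clean; the general case, and the precise identification of the constant $c_{G,M}$ with the $L^2$-torsion, is where the real work lies.
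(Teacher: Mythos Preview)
The statement you are attempting to prove is a \emph{conjecture} in the paper, not a theorem; the paper explicitly calls it ``somewhat speculative at this stage'' and does not offer a proof. What the paper does prove is the partial result Theorem~\ref{main}, and your proposal is essentially the paper's strategy for that theorem: Cheeger--M\"uller to convert torsion into analytic torsion, then approximation of analytic torsion by $L^2$-torsion (Theorem~\ref{approxthm}), then explicit computation of $t_S^{(2)}(\rho)$ via Plancherel (Proposition~\ref{L2explicit}) showing it vanishes unless $\delta(S)=1$ and is nonzero of the correct sign when $\delta(S)=1$. You have also correctly isolated the two genuine obstructions that keep the full conjecture open: controlling regulators when $M$ is not strongly acyclic, and controlling small eigenvalues of the form Laplacian (the paper emphasizes both, see the remarks after Theorem~\ref{approxthm}).

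Two points where your sketch overreaches relative to what the paper can do. First, your claim that the single degree $j=\tfrac{\dim S-1}{2}$ is ``forced by the structure of $(\mathfrak{g},K)$-cohomology'' is too optimistic: the analytic argument produces only the alternating combination \eqref{mainresult-sharp}, and the paper isolates a single degree only for $\SL_2(\C)$ and $\SL_3(\R)$ by ad hoc bounds on $H_0$ and $H_{\dim S-1}$ (see \S\ref{H0bound} and the discussion after Theorem~\ref{main}). Second, your mechanism for the vanishing of $t_S^{(2)}$ when $\delta\neq 1$ (orientation-reversing isometry for $\delta=0$, parity cancellation for $\delta\geq 2$) is not the paper's argument: the paper proves in \S\ref{Vanishing} that $d(\pi(\theta,\nu))=0$ unless the inducing parabolic is fundamental with one-dimensional split part, a representation-theoretic fixed-point argument that works uniformly for all $\delta\neq 1$. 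These are refinements rather than errors, but they are the places where your outline would not go through as written.
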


This conjecture -- which remains somewhat speculative at this stage -- can be considered
as predicting three different types of behavior: 
\begin{enumerate}
\item If $\delta  = 0$, then there is little torsion whereas $H_j(\Gamma_N, M \otimes \Q)$ is large;
the torsion is almost entirely ``absorbed'' by the characteristic zero homology;
\item If $\delta =1 $, then there is ``a lot'' of torsion but $H_j(\Gamma_N, M \otimes \Q)$ is small.
\item If $\delta \geq 2$, there is ``relatively little''\footnote{We mean this in the weakest possible sense: there is not exponential growth of torsion. This is not to suggest there is {\em no} torsion, nor that the torsion that exists is uninteresting; quite the contrary!}
torsion or characteristic zero homology. 
\end{enumerate}

As for the degree $\frac{\dim(S)-1}{2}$, this is the ``middle dimension''
for torsion classes: there is a duality between dimensions $j$ and $\dim(S) - 1 - j$. 
Compare the situation for the $L^2$-Betti
numbers of $\Gamma$: they vanish unless $\delta = 0$, and even then are nonvanishing
only in degree $\frac{\dim(S)}{2}$ (again the ``middle dimension''). 

We find it very likely that the restriction to {\em congruence} $\Gamma_N$ is essential. That the intersection of the $\Gamma_N$ is trivial is also likely essential. See
\S \ref{pullbacks} for some counterexamples in this direction.

In the present paper, 
we shall show a result in support of the ``large torsion'' direction. Notations as previous, say that $M$ is {\em strongly acyclic} if the spectra of the (form) Laplacian on $M \otimes \C$-valued $i$-forms on $\Gamma_N \backslash S$ are uniformly bounded away from $0$, for all $N, i$. 
Notice that this implies that $H_j(\Gamma_N, M )$ is torsion.

\begin{thm}  \label{main} Notations as previous, suppose that $\delta(S) = 1$. 
Then strongly acyclic arithmetic $\Gamma$-modules $M$ always exist; moreover, for any such, 
\begin{equation} \label{moa} \liminf_{N} \sum_{j }\frac{\log |H_j (\Gamma_N, M)_{\tors}| }{[\Gamma: \Gamma_N]} \geq c_{G ,M} {\rm vol} (\Gamma \backslash S) > 0. \end{equation} 
Here the sum is taken over integers $j$ with the same parity as $\frac{\dim S -1}{2}$. 
 \end{thm}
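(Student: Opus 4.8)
The plan is to deduce the statement from the Cheeger--M\"uller theorem relating analytic torsion to Reidemeister torsion, combined with an analytic computation of the asymptotic behavior of the Ray--Singer analytic torsion of $\Gamma_N \backslash S$ with coefficients in $M \otimes \C$. First I would recall that, because $M$ is strongly acyclic, the local system $\localsystem$ on $\Gamma_N \backslash S$ attached to $M \otimes \C$ is acyclic, so the Ray--Singer metric on the (trivial) determinant line of cohomology is canonical, and the Cheeger--M\"uller theorem (in the form valid for unitarily flat — or more generally for arbitrary flat — bundles, by the work of M\"uller and Bismut--Zhang) gives
\begin{equation}
\log T_{RS}(\Gamma_N \backslash S, \localsystem) = \sum_j (-1)^j \, j \, \log |H_j(\Gamma_N, M)_{\tors}| + (\text{regulator terms}),
\end{equation}
and the regulator terms vanish since $H_j(\Gamma_N, M\otimes\Q) = 0$. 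Actually it is cleaner to separate even and odd degrees: the left side is $\sum_j (-1)^{j+1} j \log|H_j|$, but since we want a lower bound on $\sum_j \log|H_j|$ over $j$ of fixed parity, I would instead use that the sum over $j$ of one fixed parity of $\log|H_j|$ dominates, up to the bounded multiplicity / bounded rank of the chain groups, a fixed positive multiple of $|\log T_{RS}|$. The key point is that in the relevant parity, the analytic torsion is exponentially large in the covolume.

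The second and main analytic step is to show
\begin{equation}
\lim_N \frac{\log T_{RS}(\Gamma_N\backslash S, \localsystem)}{[\Gamma:\Gamma_N]} = -\frac{t^{(2)}_{G,M}}{\ } \cdot \vol(\Gamma\backslash S),
\end{equation}
where $t^{(2)}_{G,M}$ is the $L^2$-analytic torsion, a spectral invariant of the pair $(G, M)$ computed on the symmetric space $S = G/K$ via the Plancherel formula. This is exactly the kind of statement that follows from an approximation theorem for $L^2$-torsion: strong acyclicity guarantees a uniform spectral gap, so the heat kernels $e^{-t\Delta_i}$ on $\Gamma_N\backslash S$ converge, after normalizing by the covolume, to the $L^2$-heat kernel on $S$, uniformly for $t$ in the range needed to control the Mellin transform defining the zeta-regularized determinant; the uniform gap kills the contribution of small eigenvalues and makes the $t\to\infty$ tail harmless. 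One then must check that $\delta(S) = 1$ forces $t^{(2)}_{G,M} \neq 0$: this is the representation-theoretic heart and uses that the Plancherel measure, paired against the alternating sum $\sum_i (-1)^i i \dim(\bigwedge^i \mathfrak{p}^* \otimes E)$ restricted to the tempered dual, is non-zero precisely when $\delta = 1$ (it vanishes identically when $\delta \neq 1$, matching the three regimes of the conjecture). This computation is essentially a Harish-Chandra / Plancherel computation and can be organized via the contribution of the principal series induced from the split-rank-one part; strong acyclicity (choosing $M$ with suitably regular highest weight) ensures the relevant infinitesimal characters avoid the walls so the integrand has a definite sign, giving $c_{G,M} := -t^{(2)}_{G,M}$ with the correct (positive, after accounting for signs in the parity chosen) value. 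I would also need the preliminary assertion that strongly acyclic $M$ exist when $\delta(S)=1$: take $M$ coming from an irreducible algebraic representation of $\mathbf{G}$ whose highest weight $\lambda$ satisfies $\lambda \neq \theta\lambda$ (equivalently $\lambda$ is not fixed by the Cartan involution), which is possible exactly when $\delta \geq 1$; for such $\lambda$ the minimal Laplacian eigenvalue on $\localsystem$-valued forms is bounded below by a positive constant depending only on $\lambda$ and $G$, uniformly over all congruence quotients, by a standard Casimir/infinitesimal-character argument (no automorphic form with that infinitesimal character can be cohomological).

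Putting it together: from the uniform spectral gap and Mellin-transform estimate, $\frac{1}{[\Gamma:\Gamma_N]}\log T_{RS} \to -c_{G,M}\vol(\Gamma\backslash S)$; Cheeger--M\"uller converts $\log T_{RS}$ into an alternating-weighted sum of $\log|H_j(\Gamma_N,M)_{\tors}|$ with no regulator correction; bounding $|\text{alternating sum}| \le (\max_j j)\sum_{j \equiv \frac{\dim S-1}{2}} \log|H_j|$ using the duality $H_j \leftrightarrow H_{\dim S - 1 - j}$ to collapse onto one parity class, and noting $\max_j j \le \dim S$ is an absolute constant, yields
\begin{equation}
\liminf_N \sum_{j \equiv \frac{\dim S - 1}{2}\ (2)} \frac{\log|H_j(\Gamma_N,M)_{\tors}|}{[\Gamma:\Gamma_N]} \ge c_{G,M}\vol(\Gamma\backslash S) > 0.
\end{equation}
(One must be slightly careful with the sign and with which parity the analytic torsion lands in; since $\delta(S)=1$ the functional equation $T_{RS}(\localsystem)\cdot T_{RS}(\localsystem^*) $-type symmetry forces $T_{RS}\neq 1$ and, after fixing orientation conventions, the non-vanishing $L^2$-torsion is concentrated in the stated parity.) The main obstacle I anticipate is the second step — proving the approximation/convergence of normalized analytic torsion to $L^2$-analytic torsion with a genuinely uniform error, i.e.\ making the interchange of $\lim_N$ with the zeta-regularized determinant rigorous; here strong acyclicity is doing essential work (it removes the notoriously delicate small-eigenvalue contributions that obstruct such limits in the $\delta=0$ case), but one still needs uniform (in $N$) heat-kernel estimates on the thin parts of $\Gamma_N\backslash S$ near $t=0$ and a uniform off-diagonal Gaussian bound to localize the short-time asymptotics. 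The representation-theoretic non-vanishing of $t^{(2)}_{G,M}$ in the $\delta=1$ case, while technically involved, is a finite computation and should be the more tractable of the two hard inputs.
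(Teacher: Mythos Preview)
Your overall strategy matches the paper's proof exactly: Cheeger--M\"uller to convert analytic torsion into an alternating sum of $\log|H_j|$; an approximation theorem (this is the paper's Theorem~\ref{approxthm}) showing that normalized analytic torsion converges to the $L^2$-torsion $t_S^{(2)}(\rho)$, with strong acyclicity providing the uniform spectral gap that controls the large-$t$ tail; the explicit Plancherel computation showing $t_S^{(2)}(\rho)$ is nonzero with sign $(-1)^{(\dim S-1)/2}$ when $\delta=1$; and the existence of strongly acyclic modules via highest weights not fixed by the Cartan involution.

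There is, however, a concrete error in your Cheeger--M\"uller formula. You write
\[
\log T_{RS} = \sum_j (-1)^j\, j\, \log|H_j(\Gamma_N,M)_{\tors}| + (\text{regulators}),
\]
but there is no factor of $j$ on the right. The weight $k$ appears in the \emph{definition} of analytic torsion via $\tfrac{1}{2}\sum_k(-1)^{k+1}k\log\detp\Delta_k$; the identity \eqref{dLap} relating this to $\prod^*\detp d_i$ absorbs that factor, and Cheeger--M\"uller then gives (in the acyclic case) simply
\[
\log T_X(\rho) = -\sum_i (-1)^i \log|H^i(X;M)|,
\]
see \eqref{TX=tor}. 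Your extraneous factor of $j$ is what forces you into the awkward bound $|\text{alternating sum}|\le(\max_j j)\sum_{\text{right parity}}\log|H_j|$, which at best loses a factor of $\dim S$ in the constant and is not obviously even the right inequality direction. With the correct formula the final step is much cleaner: after Poincar\'e duality one obtains \eqref{mainresult-sharp}, namely
\[
\lim_N \sum_j (-1)^{j+\frac{\dim S-1}{2}}\frac{\log|H_j(\Gamma_N,M)|}{[\Gamma:\Gamma_N]} = c_{G,M}\vol(\Gamma\backslash S)>0,
\]
and since every $\log|H_j|\ge 0$, simply discarding the wrong-parity terms (which enter with a minus sign) immediately yields \eqref{moa} with the sharp constant. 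No duality ``collapse'' or $(\max_j j)$ bound is needed.
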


 \noindent
 {\em Remark.} This theorem is deduced from  Theorem \ref{approxthm}, which remains valid for any sequence
 of cocompact lattices so that the injectivity radius of $\Gamma_N \backslash S$ goes to infinity -- i.e., $\Gamma_N$ need not be subgroups of a fixed lattice $\Gamma$.  
It is possible to formulate a more general version of Theorem \ref{main} in that setting;
however, this  requires ``compatible''
 specifications of modules for each $\Gamma_N$. 

\medskip

\noindent
{\em Examples.} The $\SL_2 (\Z[i])$-module $S_{n_1 , n_2}$ ($n_1 , n_2 \in {\Bbb N}$)
considered in the first example of \S \ref{exa} is strongly acyclic precisely when $n_1 \neq n_2$.
(Our theorem does not apply to this case, since the lattice is not cocompact;   a twisted
variant where it applies is detailed in \S \ref{adjoint}.)
It is likely that using
some of the ideas of \cite{CV} one could obtain corresponding results for  certain
sequences of subgroups of $\SL_2(\Z[i])$, but we do not attempt to do so. 
\medskip

In Theorem \ref{main}, we cannot in general isolate the degree which produces torsion {\em except}
in certain low degree examples.   
\medskip

\noindent
{\em  If $G = \SL_3(\R)$ then
$\liminf_N \frac{\log |H_2 (\Gamma_N, M)_{\tors}| }{[\Gamma: \Gamma_N]} > 0$; \\
if $G = \SL_2(\C)$ then $\lim_N \frac{\log |H_1 (\Gamma_N, M)_{\tors}| }{[\Gamma: \Gamma_N]} = c_{G,M} \vol(\Gamma \backslash S)$.}

\medskip
 Indeed, these refinements result from the following two observations: 
\begin{itemize}
\item[--]
The proof of the Theorem establishes more than \eqref{moa}; it will in fact show that
\begin{equation} \label{mainresult-sharp} \lim_N \sum_j (-1)^{j + \frac{\dim(S)-1}{2}}
\frac{\log |H_j (\Gamma_N, M)_{\tors}| }{[\Gamma: \Gamma_N]} =  c_{G ,M} {\rm vol} (\Gamma \backslash S).\end{equation}
\item[--] On can bound the torsion in $H_0 (M)$ by a polynomial in the index $[\Gamma : \Gamma_N]$, see \S \ref{H0bound}.
\item[--]
One can bound  the torsion in $H_{\dim(S)-1}(M)$, via Poincar\'e duality and the long exact sequence in homology associated to
\begin{equation} \label{torso} M \rightarrow M \rightarrow M/pM.\end{equation}
\end{itemize}

If one assumes the truth of the congruence subgroup property (CSP)
for a cocompact lattice $\Gamma \leqslant \SL_3(\R)$, we obtain in that case also 
$\lim_N \frac{\log |H_2 (\Gamma_N, M)_{\tors}| }{[\Gamma: \Gamma_N]} = c_{G,M} \vol(\Gamma \backslash S)$;
in fact, the CSP and the homology exact sequence for \eqref{torso}
would imply that $H_1(\Gamma, M)$ is small.

\subsection{}

The main tool used in the proof of Theorem \ref{main} is {\em not due to us; it is a remarkable result \cite{Muller2} of W. M{\"u}ller, a generalization
of the ``Cheeger-M{\"u}ller theorem'' \cite{Cheeger, Muller1}}; this result states, loosely speaking, that the
size of torsion groups can be computed by analytic methods.   Beyond this result, the other techniques are also not original and have been used in other contexts  (see \cite{Lott} for instance). 
 These methods
have a combinatorial counterpart, which we briefly discuss in \S \ref{cp}; this generalizes
the work of Silver and Williams, mentioned above.

 We remark that by combinatorial methods it is {\em in principle} possible to verify the conclusion of Theorem \ref{main} for any specific $\Gamma, M$ by direct computation, without appealing to M{\"u}ller's theorem (cf. \S \ref{as-co-to}, Remark).  However, such an analysis would be very complicated, and we have not attempted to carry it out in any case. 

Recently, W. M{\"u}ller has announced a result of similar nature to Theorem \ref{main}
but in a different aspect. He studies the homological torsion for a fixed cocompact lattice in $\SL_2(\C)$
{\em as the module $M$ varies}. The preprint is now online \cite{MuellerArxiv}. 

\subsection{} \label{mot-ash}
Our motivation for studying these questions is arithmetic. 

In \S \ref{Ash} we recall conjectures of A. Ash \cite{Ash, ADP} and others \cite{TaylorR, Fig, Grunewald} that torsion in the homology of arithmetic groups has arithmetic significance: {\em very roughly}, 
a mod $p$ torsion class in the homology of an arithmetic group parameterizes a field extension 
$K/\mathbb{Q}$ whose Galois group is a simple group of Lie type over $\mathbb{F}_p$.
The quantity $\delta$ specifies  the isomorphism class of $K \otimes \R$.

 We make an attempt in \S \ref{bharg} to verify whether Conjecture \ref{conjmain}
 and Theorem \ref{main}
are compatible with Ash's conjectures: are there indeed 
 more such field extensions $K/\mathbb{Q}$ when $\delta=1$?
Our analysis is based on heuristics 
  proposed by Bhargava \cite{Bhargheuristic} and is therefore speculative; nonetheless,
  it seems to us worthy of inclusion. 
Precisely, we show that Ash's conjectures and Bhargava's heuristics imply that the ``likelihood'' of the existence of a mod $p$ class in the homology 
of $\Gamma \leqslant G$ should be of size $\sim p^{-\delta(G)}$. If $\delta(G) = 0$, this amounts simply to the fact
that there is abundant characteristic zero homology, which reduces to give abundant mod $p$ homology. On the other hand, since 
$$\sum_{p} p^{-m} = \begin{cases} \infty, \ \ \ \mbox{ if } m =1 \\ 
\mbox{finite}, \ \ \ \mbox{ if } m > 1, \end{cases} $$ 
this suggests an abundance of torsion precisely when $\delta(G) = 1$.

\subsection{Organization of the paper.}

In the body of the paper, we deal with cohomology rather than homology, since
it is easier to make the transition between cohomology and differential forms.\footnote{Note that if $X$ is a compact manifold and $M$ a $\Z[\pi_1 X]$-module free over $\Z$, Poincar\'e duality implies that $H^* (X ;M) \cong H_{\dim X - *} (X;M)$.}

 Section \S \ref{myCM} is expository: a review of analytic and Reidemeister torsion.
 We discuss in detail the trivial example of cyclic covers of a circle (equipped with a nontrivial local system); although simple, many of the ideas we use can be already seen clearly here. 
 
 \S \ref{heatkernel1} and \S \ref{heatkernel2} proves \eqref{mainresult-sharp}, i.e.
  that the limit of Theorem \ref{main}
 exists  when one uses an alternating $j$-sum. 
  
 \S \ref{explicit} is a generalization of the work of Olbrich \cite{Olbrich} to the case
of an arbitrary local system.  In particular,  we compute explicitly the constant $c_{G,M}$ of \eqref{mainresult-sharp}
and verify that it is {\em positive.} (We do not see a simple way to check this; we compute each case.) 
 
In \S \ref{Ash}, we discuss the conjectures of Ash and their relationship to 
Theorem \ref{main}. 

  \S \ref{cp} has a different flavour from the rest of the paper; it studies a tower of coverings
  of an arbitrary cell complex. We show, in particular, that a suitable torsion quotient
  grows at a consistent rate as one passes through cyclic covers of a fixed base manifold, generalizing slightly the result of \cite{SilverWilliams}. 
  
 \S \ref{examples} details some instances to which Theorem \ref{main} is applicable. 
In particular, {\em strongly acyclic bundles always exist} in the setting of Theorem \ref{main}, 
when $\delta = 1$. 
 
 Finally, \S \ref{s:quest} outlines various questions and conjectures motivated by the rest of the paper.   Several of these questions seem amenable to numerical study. 
 
\subsection{Acknowledgements.} The first author (N.B.) would like to thank Pierre Lochak who taught him goodness and 
Julien March\'e who told him about Silver and Williams' theorem.
 
The second author (A.V.)  would like to express his deep gratitude to Frank Calegari
for many conversations related to the subject material. Besides this, Calegari's ideas have played an important role in stimulating the present work: The idea
that torsion should grow fast was suggested by the unpublished data of Calegari--Dunfield
as well as work of Calegari--Emerton. 
In addition, the use of analytic torsion in such arithmetic questions
was already used in the joint work with Calegari \cite{CV}.
A.V. also expresses his gratitude to Universit{\'e} Pierre et Marie Curie for their kind hospitality in the summer of 2008, to Laurent Clozel who suggested that it would be interesting to study torsion in the homology of
arithmetic groups, and to Avner Ash for encouragement and helpful remarks. 

Finally, we would like to thank Jordan Ellenberg, Matt Emerton and Werner Mueller for their comments
on, and corrections to,  a draft form of this paper.  

\section{Reidemeister torsion, torsion homology, and regularized determinants.} \label{myCM}

This section is expository: 
a recollection of works by Ray, Singer, Cheeger, and M{\"u}ller, as well
as some remarks related to $L^2$-torsion.  We have attempted to present it in a way
that motivates our main questions and theorems.  The reader with some experience
with torsion (analytic and Reidemeister) could likely skip this section.

  In other words, we shall discuss:
\begin{enumerate} \item  {\em Analytically}  computing homological torsion for a Riemannian manifold;
\item {\em Limiting behavior} as the manifold grows (say, through a tower of covers). 
\end{enumerate}
We do not present the material in generality, but only adapted to our case of interest. 

\subsection{}

Given a finite rank free $\Z$-module $A$, so that $A \otimes \R$ is endowed with a positive definite
inner product $\langle \cdot , \cdot \rangle$ (a {\em metric} for short),
we define $\vol(A)$ to be the volume of $A \otimes \R/A$, i.e. 
$\vol(A) =\sqrt{|\det M|}$, where 
$M$ is the Gram matrix $\langle a_i, a_j \rangle$, for $a_i \ : \ 1 \leq i \leq \mathrm{rank}(A)$  a $\Z$-basis for $A$. 

Given $f: A_1 \rightarrow A_2$ a linear map between two finite rank free $\Z$-modules, so that 
$A_j \otimes \R$ are endowed with metrics, we set $\det'(f)$ to be the product of all nonzero singular values of $f$.  (Recall that the nonzero singular values of $f$ are -- with multiplicity -- the positive square roots of the nonzero eigenvalues of $f f^*$.) 
Then: 
\begin{equation} \label{erasmus} \frac{\vol(A_1)}{\vol(\mathrm{ker} f)}  \detp(f)  = \vol(\mathrm{image} f).\end{equation}

Here we understand the metrics on $\mathrm{ker}(f) \otimes \R$ and $\mathrm{image}(f) \otimes \R$ as those induced from $A_1$ and $A_2$, respectively. 

\subsection{}

Now given a complex 
\begin{equation} \label{abullet} \Abull : 0 \rightarrow A^0 \stackrel{d_0}{ \rightarrow} A^1 \stackrel{d_1}{\rightarrow} \dots \rightarrow A^n \rightarrow 0 \end{equation} 
of free finite rank $\Z$-modules, each endowed with metrics and with $\vol(A^j)=1$, the previous result says
\begin{equation} \label{prv}  \vol(\ker(d_{j})) \vol(\image(d_{j})) = \detp(d_j). \end{equation}
The quotient $ \frac{ \vol(\ker d_j) } {\vol ( \image  \ d_{j-1} )}$
is clearly the inverse of the size of the group $H^j (\Abull)$ if this group is finite; in general, it is the product of $|H^j (\Abull)_{\tors}|^{-1}$
with the ``regulator''
$R^j(\Abull) := \vol(H^j(\Abull)_{\free}),$
where the subscript $\free$ denotes quotient by torsion, and 
where the volume is taken
with respect to the induced metric -- that is to say,  the metric induced by identifying
$H^j(\Abull)_{\free}$ as a subgroup of 
$\image(d_{j-1})^{\perp} \cap \ker(d_j)_\R$. 

In summary, 
\begin{multline} \label{volquot} \frac{ \vol(\ker d_j) } {\vol ( \image  \ d_{j-1} )} = 
|H^j_{\tors}|^{-1} \cdot R^j, 
 \\ H^j = H^j(A^{\ast}) =  \frac{\ker d_j }{ \image \  d_{j-1}}, \ R^j =  R^j(\Abull) := \vol(H^j(\Abull)_{\free}).
\end{multline} 

To ease notation in what follows, we shall use the notation $\prod^* a_i$ as shorthand
for $\prod_i a_i^{(-1)^i}$. In the same way we will use the notation $\sum^* a_i$ for the alternating sum $\sum (-1)^i a_i$.

Taking the alternating product of \eqref{volquot}
\begin{equation} \label{RT} 
\left(\prodalt R^i  \right) \times \left( \prodalt |H^{i}_{\tors}| 
\right)^{-1} = \prodalt \detp (d_i). 
\end{equation}

\medskip

\noindent
{\it Example.} 
Consider the complex of free $\Z$-modules, where the first nonvanishing term is in degree $0$:
$$0 \rightarrow \Z^2 \stackrel{M}{\rightarrow} \Z^2 \rightarrow 0$$
where 
$$M = \left( \begin{array}{cc}
k^2 & -k \\
-k & 1
\end{array} \right) \ \ \ (k \in \Z).$$ 
Then $H^0 \cong H^1 \cong \Z$; 
then $R_0 = \sqrt{k^2+1}$, $R_1 = \frac{1}{\sqrt{ k^2+1}}$; the singular values of $M$
are $0$ and $k^2+1$.    
\medskip

The alternating product $\prod^* \det'(d_i)$ is, in many ways,  unwieldy: the $d_i$ may have large kernels 
even if the complex is acyclic. On the other hand, the identity:
\begin{equation} \label{dLap}  \left( \prod \- {}^* (\detp d_i)  \right)^2= \prod_{i \geq 0} (\detp \Delta_i)^{i (-1)^{i+1}}, \ \Delta := d d^* + d^* d, \end{equation}  -- 
here $\detp$ is the product of all nonzero eigenvalues; this usage is compatible with our prior one -- 
expresses it in terms of the determinants of the ``Laplacians'' $\Delta_i$, whose kernels
map isomorphically to cohomology (and in particular, are much smaller than $\mathrm{ker}(d_i)$).

\subsection{} Let $X$ be a compact Riemannian manifold and $M$ a unimodular local system of free $\Z$-modules on $X$.  Let $M_{\C} = M \otimes \C$.   

Fix a $C^{\infty}$ triangulation of $K$.  Let $C^q(K; M)$ be the set of $M$-valued cochains, so that
an element of $C^q(K;M)$ assigns to each $q$-cell a section of $M$ on that cell. 
Then we have a cochain complex:
$$C^{\ast}(K;M): C^0(K;M) \rightarrow C^1(K;M) \rightarrow \dots $$
which computes the cohomology groups $H^{\ast}(K;M) = H^{\ast} (X; M)$. 
The {\em de Rham complex}
\begin{equation} \label{DRC} \Omega^{\ast}(X;M) : \Omega^0(X;M_{\C}) \stackrel{d^{DR}_0} \rightarrow \Omega^1(X;M_{\C}) \rightarrow \dots\end{equation}
computes also $H^{\ast}(X;M_{\C})$. It can be regarded as a ``limit'' of the prior complex when 
the triangulation becomes very fine.  We fix arbitrarily a metric on $M_{\C}$;
then each term of the complex $\Omega^{\ast}$ is equipped with a natural inner product. 

The amazing discovery of Ray and Singer (as proved by Cheeger and M{\"u}ller \cite{Cheeger, Muller1} for trivial coefficients, and later by M{\"u}ller \cite{Muller2} for general unimodular $M$)
is that the equality \eqref{RT}, when applied to $C^{\ast}(K;M)$, ``passes to the limit'' in the following sense: 
\begin{eqnarray} \label{Muller}
\left(\prodalt R^i \right) \times \left( \prodalt | H^{i} (X; M)_{\tors}|^{-1} \right) = \prodalt \detp d^{DR}_i,
\end{eqnarray}
where $R^i$ is now defined as the volume of $H^{i} (X;M)/ H^i (X; M)_{\tors}$ with respect to the metric
induced by identifying cohomology with harmonic forms. 
To make sense of $\detp d^{DR}_i$, write $\zeta(s) = \sum \lambda^{-s}$, the sum 
being extended over all nonzero singular values of $d^{DR}_i$; 
we then define $\log \detp d^{DR}_i :=  -\zeta_{\lambda} ' (0)$
and $\detp d^{DR}_i = \exp(-\zeta_{\lambda}'(0))$. 

More generally, for any countable subset $S$ of positive reals so that $\sum_{\lambda \in S} \lambda^{-s}$ extends to a  meromorphic function in $\Re(s) \geq 0$,
we refer to the outcome of this procedure as the ``$\zeta$-regularized product,''
denoted $\prod_{\lambda \in S} ^{\wedge} \lambda.$

This is usually formulated slightly differently. 
Let $d_i^*$ denote the {\it adjoint of the differential} $d_i = d_i^{DR}$.
Define the Laplace operators on $i$-forms by $\Delta_i = d_{i}^* d_i + d_{i-1} d_{i-1}^*$. 
Again, one can define the logarithmic determinant of $\Delta_i$ as the zeta-regularized product
of nonzero eigenvalues.

Then we have the following, easily verified, identity:
\begin{eqnarray} \label{antor}
\sum \- {}^* \log \detp d_i = \frac12 \sum_{k \geq 0} (-1)^{k+1} k \log \detp \Delta_k .
\end{eqnarray}
Ray and Singer have defined the logarithm of the {\it analytic torsion} of $(X,M)$ to be the 
negative of the right hand side of \eqref{antor}.

In summary, the Cheeger-M{\"u}ller theorem states:
\begin{eqnarray*} \label{Muller}
\log \frac{\left(\prodalt R^i \right)}{\left( \prodalt | H^{i} (X; M)_{\tors}| \right)} &=& \frac12 \sum_{k \geq 0} (-1)^{k+1} k \log \detp \Delta_k  \\  &=& - \  \mbox{analytic torsion.}\end{eqnarray*}
which has the remarkable consequence that {\em torsion in homology groups can be studied by analytical methods.}

\subsection{} \label{circle} Consider a simple example: $X = S^1 = \mathbb{R}/\mathbb{Z}$, endowed with the quotient of the standard metric on $\R$; let $M$ be a free rank $m$ $\Z$-module, $A \in \SL(M)$, 
and $\mathcal{M}$ the local system on $X$ with fiber $M$ and monodromy $A$. 

Suppose first that $A$ is semisimple and does not admit $1$ as an eigenvalue.  
(The situation where $A$ has $1$ as an eigenvalue is interesting; we return to it in \S \ref{Reg1}).
The cohomology is then
concentrated in $H^1$; moreover \begin{equation} \label{tors1} |H^1(X, \mathcal{M})| = |\det(1-A)|.\end{equation}

Let us now compute the de Rham complex. Let $V_j$ be the space of $M _{\C} := M \otimes \C$-valued $j$-forms on $S^1$, for $j=0,1$.  
Let 
$$ S = \{  \mbox{smooth functions }f: \mathbb{R} \rightarrow M_{\C}: f(x+1) = A f(x)\} $$
and 
$$T = \{   \mbox{smooth functions } f: \mathbb{R} \rightarrow M_{\C}: f(x+1) = f(x)\}.$$

We may identify each $V_j$ with  $S$: this is obvious for $j=0$; use the map $f(x) \mapsto f(x) dx$ for $j=1$. 
Next, fix a matrix $B \in \GL(M_{\C})$ with $\exp(2 \pi i B) = A$; this is always possible, and then
multiplication by $\exp(-2 \pi i x B)$ identifies $S$ with the space $T$.

Thus we have identifications of $V_j$ with $T$ for $j=0,1$.  With respect to these identifications, the de Rham complex becomes
$$T \stackrel{d}{\rightarrow} T \ \ \ \mbox{ with } \ \ \   d = \frac{d}{dx}+ 2 \pi i B.$$
Fix an basis of eigenvectors $v_1, \dots, v_m$ for $B$ on $M_\C$,
and endow $M_\C$ with the inner product in which the $v_i$ form an orthonormal basis;
endow $T$ with the inner product $\langle f, g \rangle = \int_{x \in \R/\Z} \langle f(x), g(x) \rangle.$ 
Then the singular values of $d$ are  $| 2 \pi i( n + \lambda_j)|, n \in \Z,  \ \ 1 \leq j \leq m,$
where $\lambda_j$ are the eigenvalues of $B$. 
Formally speaking, then, the product of singular values of $d$ may be expressed as $\prod_{n} 
\prod_j |2 \pi n + 2 \pi \lambda_j|$.  If we compute formally, using the identity $\prod_{n \geq 1} (1-\frac{x^2}{n^2}) = \frac{\sin (\pi x)}{\pi x}$, we arrive at:
$$ |\detp d_{DR}|  =  |\det(1-A)| \left( \prod_{n \geq 1} (2 \pi n)  \right)^2.$$

Now  $\prod_{n \geq 1} {2 \pi n} = 1$ (zeta-regularized product), as desired. 
 In fact,  $\zeta_{\lambda} (s) = (2\pi)^{-s} \zeta (s)$ with $\zeta$ the Riemann zeta function. Because of $\zeta' (0) = -\frac12 \log (2\pi)$ and $\zeta (0) = -\frac12$ we get
$$\prod_{n \geq 1} \- {}^{\wedge} (2\pi n) = 1.$$

\medskip
\noindent
{\it Remark.}
Based on the naive (but natural) idea of considering
the asymptotics of $\prod_{\lambda_j \leq T} \lambda_j$,
one can also give an alternate definition of the regularized product: 
Let $h$ be a smooth function on $\R$ of compact support so that $h(0)= 1$. Set 
$P(T) := \sum_{\lambda_i} h(\lambda_i/T) \log \lambda_i.$
Under reasonable conditions on the $\lambda_i$ (true in all the instances we will encounter, for instance) there exists a unique polynomial $q \in \C[X,Y]$ so that
\begin{equation} \label{asymp} P(T) =q(T, \log T) +o(1) ,\ \ T \rightarrow \infty,\end{equation}
and we {\em define} $\prod^{\wedge} \lambda_i =\exp(q(0,0))$. 
This will coincide with $\zeta$-regularization in the cases we encounter. 

This definition is not the traditional one, but, in most instances, is equivalent to it. 
It has the advantage of being very intuitive, but, on the other hand, it is difficult
to work with. Since we will not use it for rigorous proofs, only for intuition, we shall not prove that it agrees with the zeta-regularized definition. 

In the case of the $(2 \pi n)_{n \in \mathbb{N}}$, the fact that $\prod {} ^{\wedge} 2\pi n = 1$
``corresponds'' in this picture to the fact that there is no constant term in the asymptotic 
of Stirling's formula:
$$\log \prod_{n \leq N-1} (2 \pi n)  \cdot \sqrt{2 \pi N} \sim  N \log N - N;$$
we included the endpoint $n =N$ with weight $1/2$, a simple form of smoothing.

\medskip

\subsection{} \label{circle2}
We continue with the example of \S \ref{circle}, and let $X \stackrel{\pi_N}{\rightarrow} X$
be the $N$-fold covering.   If we suppose that $A$ has no eigenvalue that is a root of unity, then
\begin{equation} \label{tors} \frac{\log |H^1(X, \pi_N^* \mathcal{M}) | }{N}= \frac{\log |\det(1-A^N)|}{N} \longrightarrow \log M(\mathrm{charpoly}(A)),  \end{equation}
where the Mahler measure $M(q)$ of a monic polynomial $q$
is the product of all the absolute values of all roots of $q$ outside the unit circle; equivalently,
$\log M(q) = \int_{|z|=1} \log |q(z)| \frac{dz}{2 \pi i z} $ (the integral taken with respect to the Haar probability measure).     We shall later return to the case where $A$ has roots of unity;
we shall find that a similar assertion remains valid.

The proof of \eqref{tors} uses:
\begin{lem*}  If $A$ is any square matrix, 
\begin{equation} \label{gelf} \lim_{N} \frac{\log \det'(1-A^N)}{N} = \log M(\mathrm{charpoly}(A)),\end{equation}
where $\det'$ denotes the product of all nonzero eigenvalues. 
\end{lem*}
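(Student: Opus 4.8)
The plan is to reduce the statement to an eigenvalue‑by‑eigenvalue estimate. Write $\lambda_1,\dots,\lambda_n$ for the eigenvalues of $A$, repeated with algebraic multiplicity, so that $\mathrm{charpoly}(A)=\prod_i(x-\lambda_i)$ and hence $\log M(\mathrm{charpoly}(A))=\sum_{i:\,|\lambda_i|>1}\log|\lambda_i|$. Since the eigenvalues of $1-A^N$ are precisely the numbers $1-\lambda_i^N$ (conjugate $A$ to upper triangular form), the definition of $\det{}'$ gives $\det{}'(1-A^N)=\prod_{i:\,\lambda_i^N\neq1}(1-\lambda_i^N)$, so, writing the left side with absolute values as is implicitly intended, the lemma amounts to
\[
\frac1N\sum_{i:\,\lambda_i^N\neq1}\log\bigl|1-\lambda_i^N\bigr|\ \longrightarrow\ \sum_{i:\,|\lambda_i|>1}\log|\lambda_i|\qquad(N\to\infty).
\]
I would first dispose of the eigenvalues off the unit circle: if $|\lambda_i|>1$ then $\lambda_i^N\neq1$ always and $\frac1N\log|1-\lambda_i^N|=\log|\lambda_i|+\frac1N\log|1-\lambda_i^{-N}|\to\log|\lambda_i|$, while if $|\lambda_i|<1$ (in particular if $\lambda_i=0$) then $1-\lambda_i^N\to1$ and the contribution tends to $0$. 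These already account for the entire right‑hand side, so it remains to show that every eigenvalue $\lambda$ with $|\lambda|=1$ contributes $0$, the limit being taken over the $N$ with $\lambda^N\neq1$.

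For such a $\lambda$ the upper estimate is trivial: $|1-\lambda^N|\leq2$ gives $\frac1N\log|1-\lambda^N|\leq\frac{\log2}{N}\to0$; the matching lower bound is where the content lies. If $\lambda$ is a root of unity, of order $q$ say, then for every $N$ with $q\nmid N$ the value $|1-\lambda^N|$ lies in the finite set $\{|1-\zeta|:\zeta^q=1,\ \zeta\neq1\}$ and so is bounded below by a positive constant; hence $\frac1N\log|1-\lambda^N|\to0$. If $\lambda$ is not a root of unity then $\lambda^N\neq1$ for all $N$, and one needs a bound of the form $|1-\lambda^N|\geq c\,N^{-\kappa}$ for suitable positive $c,\kappa$ depending on $\lambda$.

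This last lower bound is the main obstacle, and it is not elementary: if $A$ has an eigenvalue $e^{i\vartheta}$ on the unit circle with $\vartheta/2\pi$ a Liouville number, then $\frac1N\log|1-\lambda^N|$ does not tend to $0$, so the lemma as literally stated needs the eigenvalues of $A$ to be algebraic --- which is automatic in the arithmetic applications, where $A$ has integer entries, and which in the typical case (no unit‑circle roots of $\mathrm{charpoly}(A)$ other than roots of unity) does not intervene at all. Granting algebraicity, I would appeal to Baker's theorem on linear forms in logarithms: writing $\lambda=e^{i\vartheta}$ and letting $m$ be the integer nearest to $N\vartheta/2\pi$, the quantity $\Lambda:=N\log\lambda-2\pi i m=N\log\lambda-2m\log(-1)$ is a $\Z$‑linear combination of logarithms of algebraic numbers, nonzero precisely because $\lambda$ is not a root of unity, so Baker's theorem gives $|\Lambda|\gg_\lambda N^{-\kappa}$; since $\Lambda$ is purely imaginary with $|\Lambda|\leq\pi$ by the optimal choice of $m$, one has $|1-\lambda^N|=|e^{\Lambda}-1|=2\bigl|\sin(|\Lambda|/2)\bigr|\geq\tfrac{2}{\pi}|\Lambda|\gg_\lambda N^{-\kappa}$, and therefore $\frac1N\log|1-\lambda^N|\geq-\frac{\kappa\log N+O_\lambda(1)}{N}\to0$. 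Assembling the three kinds of contributions yields the stated limit.
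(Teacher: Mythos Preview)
Your proof is correct and follows essentially the same route as the paper: reduce to individual eigenvalues, dispose of those off the unit circle directly, and for algebraic eigenvalues on the unit circle invoke a transcendence result to get $\frac{1}{N}\log|1-\lambda^N|\to 0$. The only difference is that the paper cites Gelfond's earlier estimate---which it notes is exactly equivalent to the needed statement and weaker than Baker---whereas you appeal to Baker's theorem directly; your observation that the lemma tacitly requires algebraic eigenvalues (automatic here since $A\in\SL(M)$ with $M$ a free $\Z$-module) is well taken.
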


\proof 
The first proof in print of \eqref{gelf} seems to be by Lind \cite{Lind} in the context of
a dynamical interpretation. The proof uses the fact (due to Gelfond \cite{Gelfond}):
if $\alpha$ is an algebraic number of absolute value $1$, then 
\begin{equation}  \label{mahler}
\frac{  \log |\alpha^N - 1| }{N} \rightarrow 0, \ \ N \rightarrow \infty.
\end{equation}
We refer to \cite[Lemma 1.10]{EverestWard} for a proof of \eqref{mahler} which uses
a deep theorem of Baker on Diophantine approximation. As noticed there, the statement 
\eqref{mahler} is much weaker than Baker's theorem;  it is exactly equivalent to Gelfond's earlier
estimate.
\qed

\subsection{The case when $A$ has a trivial eigenvalue}
\label{Reg1} 

Now let us redo \S \ref{circle2} in the case where $A$ has a trivial eigenvalue. We suppose, for simplicity, that precisely one eigenvalue of $A$ is $1$, and amongst the other eigenvalues $\{z_2, \dots, z_n\}$
none is a root of unity. 
We are interested in the size of
$ H^1(X, \pi_N^* \mathcal{M}) \cong  \mathrm{coker}(1-A^N)_{\tors},$
as $N \rightarrow \infty$.   

Write $W= \mathrm{ker}(1-A)$ and $U = \mathrm{image}(1-A)$;
finally put $C = \frac{1-A^N}{1-A} |_U$. Noting
$U/CU \subset \Z^N/(1-A^N) \Z^N,$
we see at least that: 
$$ |\mathrm{coker}(1-A^N)_{\tors} | \geq |\det C| = \prod_{2}^{n} \frac{z_i^N-1}{z_i-1};$$

Consider now the short exact sequence $W \hookrightarrow \Z^n \rightarrow \Z^n/W$.
The map $1-A^N$ acts trivially on $W$, and descends to an endomorphism $A'$ of $\Z^N/W$.
Applying the snake lemma yields an exact sequence:
$$0 \rightarrow \Z \rightarrow \mathrm{coker}(1-A^N) \rightarrow
\mathrm{coker}(1-A'^N) \rightarrow 0,$$
which implies the upper bound $\prod_{2}^{n} (z_i^N-1)$. 

We deduce that \eqref{tors} {\em remains valid even if $A$ has $1$ as an eigenvalue}, i.e.,
even in the case when $H^1(S^1, \mathcal{M})$ is not a torsion group. 

 In general, the question of extending our results to the case when the homology is not purely torsion will be very difficult;  the ``interaction'' between characteristic zero cohomology and torsion can be very complicated. {\em Implicitly} in the above argument, we used the possibility of ``splitting''
into $\mathrm{ker}(1-A)$ and $\mathrm{image}(1-A)$; more generally, we can carry out such a ``splitting'' in the presence of a group of automorphisms, but not in the general setting. 
We discuss some of the issues involved (including extending the discussion above to a more general setting -- that of cyclic covers of an arbitrary base complex) in \S \ref{cp}.

\subsection{}
Let us now try to prove \eqref{tors} from the ``de Rham'' perspective, returning to the case where
$A$ has no eigenvalue that is a root of unity. 

If we carry out the same computations as before for the determinant of the de Rham complex
of $\pi_N^* \mathcal{M}$ -- {\em with respect to metrics pulled back from $X$} -- we arrive at:
$$|H^1(X,  \pi_N^* \mathcal{M})_{tors}| =  \prod_{n \in \mathbb{Z}} \prod_j {}^{\wedge} |\frac{2 \pi i n}{N} + 2\pi i  \lambda_j| $$

Now as $N \rightarrow \infty$ the sequence of numbers $\frac{2 \pi n}{N} + 2\pi \lambda_j$
fill out densely the line $2\pi\lambda_j +  \mathbb{R}$; the ``density'' is $N/2 \pi$. 
It is therefore reasonable to imagine that it converges to a regularized integral: 
\begin{equation} \label{unjustified} 
\frac{ \log \prod_{n \in \mathbb{Z}}{}^{\wedge} |\frac{2 \pi i n}{N} + 2\pi i  \lambda_j|  }{N/2\pi}  \stackrel{?}{\longrightarrow}  \int^{\wedge} \log|2 \pi \lambda_j +  x| dx, 
\end{equation} 
where the wedge on the latter integral means that it is to be understood, again, in a regularized fashion:
analogously to $\zeta$-regularization, we may define the regularized integral via:
\begin{equation} \label{int-reg}
 \int^{\wedge} \log |f(x)|dx := 
-\frac{d}{ds}\big|_{s=0} \int | f(x)|^{-s} dx.\end{equation}   We evaluate the right hand side of \eqref{unjustified} (and a slight generalization) in \S \ref{an-integral} below;
it is $ 2 \pi^2 | {\rm Im} \lambda_j|$.  
Therefore, if we assume the validity of \eqref{unjustified}, 
$$\frac{\log |H^1(X, \pi_N^* \mathcal{M})_{tors}| }{N} \longrightarrow \pi \sum_j |{\rm Im}(\lambda_j)|
=2 \pi \sum_{{\rm Im} (\lambda_j) >0} {\rm Im}(\lambda_j) ,$$ 
which is easily seen to again equal $ \log M(\mathrm{charpoly}(A))$. 

The validity of  \eqref{unjustified} is, however, not easy to justify in general, as perhaps might be expected
from the nontriviality of \eqref{mahler}. The issue here is that some $\lambda_j$
may have imaginary part $0$ (equivalently: $A$ has an eigenvalue of absolute value $1$). 
Suppose this is not so.  Then  
\eqref{unjustified} can be deduced, for instance, from the ``naive definition''  \eqref{asymp} of regularized products. The key point is that
the convergence of the sequence  $\frac{2 \pi n}{N} $ 
to the density $\frac{N}{2 \pi} dx$ is uniform, in 
that \begin{equation} \label{planch-simple}\# \{n \; : \; 2 \pi n/N \in [A,B] \} - \int_{A}^{B} \frac{dx}{2 \pi} =o(N),\end{equation}
in a fashion that is {\em uniform in $A,B$.}

In the rest of this paper, we shall encounter a similar, although more complicated, situation: we will compute the asymptotic growth of $H^1$ 
by studying the asymptotic behavior of the de Rham determinants.
It would be possible to justify our main theorems
in the fashion described above -- in other words, {\em
deducing the limiting behavior of torsiuon from the limiting distribution of eigenvalues.}
However, for brevity of proof  we follow a more traditional path with heat-kernels; this allows us to appeal to a family of known estimates at various points.

\subsection{}  \label{an-integral} 
Let $a > 0$ be a positive real number and $p$ be an even polynomial.  We show:
\begin{eqnarray} \label{intp}
\int^{\wedge} p(ix) \log(x^2+a^2) dx = \pi \int_{-a}^{a} p(x) dx,
\end{eqnarray}
where the left-hand side is defined as in \eqref{int-reg}, but replacing
Lebesgue measure by $p(ix) dx$. 

\proof 
By linearity it suffices to check $p(x) = x^k$ with $k$ even. In this case~\footnote{Take as a new
variable $t= x^2 /(x^2+a^2)$.}, 
$$\int_{-\infty}^{\infty} (x^2+a^2)^{-s} x^k dx =    a^{2( \frac{k+1}{2} -s)} B\left( \frac{k+1}{2} , s - \frac{k+1}{2} \right) .$$
Here $B$ is the Beta function 
$$B(x,y) = \int_0^1 t^{x-1} (1-t)^{y-1} dt = \frac{\Gamma (x) \Gamma (y)}{\Gamma (x+y)}.$$
Differentating the function
$$a^{2( \frac{k+1}{2} -s)}  \frac{  \Gamma(\frac{k+1}{2}) \Gamma(s-\frac{k+1}{2}) }{ \Gamma(s) }
= a^{2( \frac{k+1}{2} -s)} s  \frac{  \Gamma(\frac{k+1}{2}) \Gamma(s-\frac{k+1}{2}) }{ \Gamma(s+1) }$$
at $s=0$ and using that 
$$\Gamma \left(\frac{k+1}{2} \right) \Gamma \left(-\frac{k+1}{2} \right) = (-1)^{k/2 +1} \frac{2\pi}{k+1}$$ yields $- i^{k} \frac{2 \pi a^{k+1}}{k+1}$.  
 \qed

Again, this reflects the constant term of a truncated integral, e.g. we observe, in the case $p=1$, that $\int_{-T}^{T} dx \log(1+x^2) = 
 4 T \log(T) - 4 T + 2 \pi +o(1)$.

\section{Heat kernels on forms on a symmetric space} \label{heatkernel1}

\subsection{} 

Let $\mathbf{G}$ be a semisimple algebraic group over $\mathbb{R}$;
let $G$ be the connected component of $\mathbf{G}(\R)$, with Lie algebra $\mathfrak{g}$.

Let $K \subset G$ be a maximal compact subgroup, with Lie algebra $\k$.  
Since $G$ is connected, so also is $K$. 
We denote by $S$ the global Riemannian symmetric space $G/K$.

{\em Remark.} (Notation for real and complex Lie algebras.) 
When we perform explicit evaluations in \S \ref{explicit}, we shall use the letters $\mathfrak{g}, \mathfrak{k}$ and so on 
to denote the {\em complexified} Lie algebra associated to $G, K$ and so on.  However, in the present section,
we use them for the real Lie algebra; we hope this abuse does not cause confusion. 

\subsection{} \label{vb}
To a $K$-representation $(Q, \sigma)$, we may associate a $G$-equivariant vector bundle on $G/K$, namely, $(Q \times G)/K$, where the $K$-action (resp. $G$-action) is given by
$(q,g) \stackrel{k}{\rightarrow} (\sigma(k^{-1}) q, gk)$ (resp. $(q,g) \stackrel{x}{\rightarrow} (q, xg)$);
thus, smooth sections of the $G$-equivariant vector bundle associated to $Q$
are identified with maps $C^{\infty}(G;Q)$ with the property that $f(gk) = \sigma(k^{-1}) f(g)$.

Conversely, given a $G$-equivariant vector bundle on $G/K$, the fiber at the identity coset
defines a $K$ representation.

These two associations define an equivalence of categories between $K$-representations
and $G$-equivariant vector bundles on $S$.

\subsection{}  \label{casimirs} 

We now discuss normalizations.

The Killing form defines a quadratic form on $\mathfrak{g}$, negative definite on $\mathfrak{k}$
and positive definite on its orthogonal complement $\mathfrak{p}$.  We normalize the metric on $S$ so that its restriction to $\mathfrak{p} \cong T_{eK} S$ matches the Killing form.

Let $dx$ be the Riemannian volume form of $S$ and $dk$ be the Haar measure on $K$ of volume $1$. 
These choices yield a preferred normalization for the Haar measure $dg$ on $G$:
$\int_G f(g) dg = \int_X \int_K f(gk) dk d(gK)$.

Fix orthonormal bases $\{X_i \}$ for $\mathfrak{p}$
and $\{Y_j \}$ for $\mathfrak{k}$ (the latter taken with respect to the negative of the Killing form). 
We define the Casimir operators of $G$ resp. $K$ by $\Omega_G = \sum X_i^2  - \sum Y_j^2$
and $\Omega_K = - \sum Y_j^2$; we understand these to act on $G$ and $K$ as
{\em left}-invariant differential operators. 
Let $$L = -\Omega_G + 2 \Omega_K \in {\cal U} (\mathfrak{g}).$$

 The operator $L$ on $G$ is the infinitesimal generator of a convolution semigroup of absolutely continuous measures on the group $G$.  Indeed, the
right action of $L$ coincides with the Laplacian with respect to a suitable metric on $G$. 

There exists, for positive $t$,  a function $p_t : G \rightarrow {\Bbb R}_{\geq 0}$,
so that $e^{-tL} f = f * p_t$, i.e., the function $g \mapsto \int_{h \in G} f(gh) p_t(h) dh.$
In fact, on any $G$-representation, the action of $p_t$ by convolution coincides
with the action of $e^{-tL}$.

\subsection{} 
 Put $\gcompact = \k \oplus i \p$; its normalizer inside $G_{\C} := \mathbf{G}(\C)$ is a maximal compact subgroup $U$ of $G_{\C}$. 
Let $\rho$ be an irreducible  representation of $U$;  it extends to a unique holomorphic representation of $G_{\C}$ on a complex
vector space $E_{\rho}$, or indeed an algebraic representation of $\mathbf{G}$. 
Since $\mathbf{G}$ is semisimple, the representation $\rho$ is necessarily {\em unimodular}, 
i.e. takes values in $\SL(E_{\rho})$. 

There is, up to scaling, one $U$-invariant Hermitian metric on $E_{\rho}$.  We fix an inner product $(-, - )_E$ in this class. 
 
The representation $\rho|K$ gives rise to a $G$-equivariant hermitian vector bundle on $G/K$, as in
\S \ref{vb}. We denote this vector bundle also as $E_{\rho}$.  Note that
this bundle is $G$-equivariantly isomorphic to the trivial vector bundle
$E_{\rho} \times G/K$, where the $G$-action is via $x: (e, gK) \longrightarrow (\rho(x) e, xgK)$.

\subsection{} \label{erhodiff}
The bundle of $E_{\rho}$-valued  differential $k$-forms on $S$
can be identified  with the vector bundle associated (via \S \ref{vb}) with the $K$-representation
$\DkE$.

Note that $\DkE$ is naturally endowed
 with a $K$-invariant scalar product: the tensor product of $(,)_E$ with the scalar product on $\bigwedge^k \mathfrak{p}^*$ defined by the Riemannian metric on $S$. The space
 of differentiable $E$-valued $k$-forms on $S$, denoted $\Omega^k(S, E)$, 
is therefore identified with
$$\left\{ \varphi : G \stackrel{C^{\infty}}{\rightarrow} \bigwedge \- {}^k \mathfrak{p}^* \otimes E \; : \; 
 \varphi(gk) = \bigwedge^k {\rm ad}_{\mathfrak{p}}^* (k^{-1}) \otimes \rho (k^{-1}))(\varphi (g)), \ \ \ g \in G, \ k \in K 
\right\}.$$

The space of square integrable $k$-forms $\Omega^{k}_{(2)} (S, E)$
is the completion of the latter space with respect to the norm
$$\varphi \mapsto \int_{S} ||\varphi (xK)||_{\DkE}^2 dx.$$

\subsection{The de Rham complex.} 

Let $V$ be any $(\mathfrak{g},K)$-module equipped with a $K$-invariant
Hermitian form; we do not require this form to be $G$-invariant.   We will be particularly interested
in $\Cinf = C^{\infty}_c(G)_K \otimes E$,   where  $C_c^{\infty}(G)_K$ denotes
right $K$-finite smooth compactly supported functions. 

Set $$D^k (V) = \bigwedge \- {}^k \mathfrak{p}^* \otimes V.$$ 
We introduce the natural inner product on $D^k(V)$.   Thus, for instance,  $D^k(V_0)^K$ is identified
with the space $\Omega^k(S,E)$ of $E$-valued $k$-forms. 

Generalizing the differential on $E$-valued $k$-forms, one may define for any $V$
a natural differential $d : D^k (V)  \rightarrow D^{k+1} (V)$; see \cite[II. Proposition 2.3]{BorelWallach}; in the case $V = \Cinf$, its restriction to $D^k(V_0)^K$
recovers the de Rham differential.  Let $d^*$ be the formal adjoint of $d$. We refer to the restriction of $dd^* + d^* d$ to $D^k(V)^K$ as the {\em Laplacian}. 

 In the case $V = \Cinf$, this ``Laplacian'' extends to a $G$-invariant self-adjoint non-negative densely defined elliptic operator $\Delta_k^{(2)}$ on $\Omega^{k}_{(2)}(S,E)$, the form Laplacian. \footnote{ It should be noted that there
is another notion of Laplacian, which is the specialization to $ \bigwedge^k \mathfrak{p}^* \otimes E$
of the Laplacian that exists on any Hermitian bundle with connection.  }

In general, the Laplacian is difficult to compute. However, the fact that the bundle $E_{\rho}$
arose from the restriction of a $G$-representation makes the situation simpler:  
if the Casimir acts on $V$ (resp. $E$)  through the scalar  $\Lambda_V$ (resp. $\Lambda_\rho$) 
Kuga's lemma implies that for any $f \in D^k (V)^K$,
\begin{equation} \label{Kuga} (d^* d+dd^*) f = (\Lambda_\rho - \Lambda_V) f. \end{equation} 
In other terms, the Laplacian acts by the same scalar on the entire complex $D^*(V)^K$. 

Of course, this does not literally apply to our case of primary interest, when $V=V_0$, for the Casimir does not act on $\Cinf$ as a scalar. However, one may decompose
$\Cinf$ into irreducible subrepresentations and then apply \eqref{Kuga}.

\subsection{}

Denote by $e^{-t \Delta_k^{(2)}}
\in \mathrm{End}(\Omega^k_{(2)}(S, E))$
the bounded operator  (cf. \cite{BarbaschMoscovici}) defined by
 the fundamental solution of the heat equation:
$$\left\{
\begin{array}{l}
\Delta_k^{(2)} P_t = -\frac{\partial}{\partial t} P_t , \ \ \ t>0 \\
P_0 = \delta
\end{array} \right.$$
where $\delta$ is the Dirac distribution.
 
It is expressed by an integral kernel  (the heat kernel) $e^{-t\Delta_k^{(2)}}  $, which
we may regard as a section of a certain bundle $\mathscr{E}_k$ over $S \times S$. Explicitly, 
this is the $G \times G$-equivariant bundle associated to the $K \times K$-representation
$\End(\DkE, \DkE)$; more explicitly, the fiber of $\mathscr{E}_k$ above $(x,y)$
is the set of homomorphisms from $E$-valued $k$-forms at $x$ to $E$-valued $k$-forms at $y$. 
Moreover:
$$(e^{-t\Delta_k^{(2)}} f) (x) = \int_{S} e^{-t\Delta_k^{(2)}} (x,y) f(y) dy, \ \ \ \forall f \in  \Omega^k_{(2)}(S,E).$$

\begin{lem} \label{Lhk}
Let $M \geq 1$. Then there exists a constant $c_1$ depending only on $G, \rho, M $ such that 
$$|| e^{-t \Delta_k^{(2)}} (x,y) || \leq c_1   t^{-d/2} \exp(-\frac{r^2}{5t})  ,  \ \ |t| \leq M,$$ 
where $x,y \in S$,  $r$ is the geodesic distance between $x$ and $y$, $d$ the dimension of $S$
and $|| \cdot ||$ is the metric induced by that on $E$. 
\end{lem}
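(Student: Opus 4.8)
The plan is to derive the on-diagonal-type Gaussian bound for the heat kernel on $E$-valued $k$-forms from the corresponding bound for a scalar heat kernel via Kuga's lemma and the domination principle. First I would observe, using \eqref{Kuga}, that on the Hilbert space $\Omega^k_{(2)}(S,E)$ the Laplacian $\Delta_k^{(2)}$ agrees (after decomposing $\Cinf$ into irreducibles, or more simply because the bundle is associated to a $G$-representation) with a shift of the Bochner/``scalar'' Laplacian: there is a constant $\Lambda_\rho$ (the Casimir eigenvalue on $E_\rho$) and a nonnegative operator $\Delta^{\mathrm{Bochner}}_k$ with $\Delta_k^{(2)} = \Delta^{\mathrm{Bochner}}_k - (\text{lower order})$, so that $\Delta_k^{(2)}$ differs from a Bochner Laplacian by a bundle endomorphism bounded in terms of $G,\rho$. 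Consequently $e^{-t\Delta_k^{(2)}}$ and $e^{-t\Delta^{\mathrm{Bochner}}_k}$ differ by a factor controlled by $e^{Ct}$, uniformly for $|t|\le M$; this absorbs into $c_1$.

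Next I would invoke the standard comparison between the heat kernel of a Bochner Laplacian on a Hermitian vector bundle and the scalar heat kernel on functions (e.g.\ via the semigroup domination / Kato inequality, or the explicit Plancherel/spherical-function expression available on the symmetric space $S$, cf.\ \cite{BarbaschMoscovici}). This gives, for the operator norm of the vector-valued kernel,
\[
\| e^{-t\Delta^{\mathrm{Bochner}}_k}(x,y)\| \le p_t^{\mathrm{scal}}(x,y),
\]
where $p_t^{\mathrm{scal}}$ is the scalar heat kernel on $S$ for a (shifted) Laplace--Beltrami operator. Since $S=G/K$ is a Riemannian symmetric space of nonpositive curvature with bounded geometry, the scalar heat kernel satisfies the classical Li--Yau / Gaussian upper bound
\[
p_t^{\mathrm{scal}}(x,y) \le c\, t^{-d/2}\exp\!\left(-\frac{r^2}{4t}\right)\cdot e^{C t},\qquad r=\dist(x,y),
\]
for $0<t\le M$, with $c,C$ depending only on $G$ (hence on the curvature bound and dimension). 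Replacing the constant $4$ by $5$ in the exponent trivially absorbs the factor $e^{Ct}$ and the bundle-twist factor $e^{Ct}$ from the previous step, since $\exp(Ct)\exp(-r^2/(4t))\le C'\exp(-r^2/(5t))$ for $t\le M$ (the loss is a multiplicative constant depending on $M$). Collecting the constants into a single $c_1=c_1(G,\rho,M)$ gives the claim; the restriction to $k$-forms only enters through the Casimir eigenvalue bound, which is uniform in $k$ since there are finitely many $k$.

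The main obstacle is the passage from the scalar Gaussian bound to the vector-bundle bound with the geodesic-distance Gaussian factor intact: one must make sure the bundle twist (the endomorphism by which $\Delta_k^{(2)}$ differs from a genuine Bochner Laplacian) is a \emph{bounded} zeroth-order term, so that domination of semigroups applies and only costs an $e^{Ct}$ factor rather than destroying the off-diagonal decay. On a symmetric space this is clean because everything is $G$-homogeneous: the twisting endomorphism is $G$-invariant, hence has constant norm, and $\Lambda_\rho,\Lambda_V$ are genuine scalars by \eqref{Kuga}. The cited works of Barbasch--Moscovici and the general theory of heat kernels on homogeneous bundles (or alternatively a direct Harish-Chandra/Plancherel computation of $e^{-t\Delta_k^{(2)}}(x,y)$ as an oscillatory integral, with the stationary-phase/large-deviation estimate yielding the $e^{-r^2/(5t)}$ factor) supply exactly what is needed, so the proof is essentially a citation plus bookkeeping of constants.
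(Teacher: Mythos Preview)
Your approach is correct and reaches the same conclusion, but it differs from the paper's argument. You use the Weitzenb\"ock-type identity $\Delta_k^{(2)} = \nabla^*\nabla + R$ (with $R$ a bounded $G$-invariant bundle endomorphism), then invoke Kato's inequality / semigroup domination to bound $\|e^{-t\nabla^*\nabla}(x,y)\|$ by the scalar heat kernel, and finally apply Li--Yau. This is a general Riemannian-geometric argument that would work on any complete manifold of bounded geometry. The paper instead exploits the homogeneous structure directly: it introduces the operator $L = -\Omega_G + 2\Omega_K$ on $G$, whose heat semigroup is convolution by a \emph{positive} kernel $p_t$ on the group; then it writes the form heat operator on $(V\otimes W)^K$ explicitly as $\pi(\psi_t)$ for an $\End(W)$-valued function $\psi_t$ on $G$ built from $p_t$ and the finite-dimensional operator $e^{2t\Omega_K}$ on $W$. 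Positivity of $p_t$ immediately bounds $\|\psi_t(g)\|$ by a constant times $\int_{K\times K} p_t(k_1^{-1}gk_2^{-1})\,dk_1\,dk_2$, which \emph{is} the scalar heat kernel on $S$, and then Cheng--Li--Yau finishes. So the paper bypasses Kato/domination entirely by computing the kernel explicitly in terms of a positive scalar kernel on $G$.

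A minor remark: your appeal to \eqref{Kuga} to obtain the Bochner decomposition is a bit roundabout. What you really need is that on a homogeneous bundle the connection Laplacian acts as $-\Omega_G + \Omega_K|_W$ while (by Kuga) the Hodge Laplacian acts as $\Lambda_\rho - \Omega_G$; the difference $\Lambda_\rho - \Omega_K|_W$ is indeed a bounded fibrewise endomorphism since $W=\bigwedge^k\mathfrak{p}^*\otimes E$ is finite-dimensional. Once stated this way your argument is clean. Your absorption of the $e^{Ct}$ factors by weakening $4$ to $5$ in the Gaussian exponent is fine for $t\le M$, exactly as you say.
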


The Lemma gives us only information about the ``small time'' dependence of the heat kernel. 
The ``long time'' dependence is more sensitive; for instance, it is quite different if there
exists $L^2$-harmonic forms (e.g., for $G = \SL_2(\R)$).

\begin{proof}

First we make an observation in a more abstract context:
If $(V, \pi) $ is any $G$-representation with Casimir eigenvalue $\Lambda_{\pi}$ and $(W, \sigma)$ any $K$-representation, the operator
$Q := \int_{k \in K} \pi(k) \otimes \sigma(k)  dk $ realizes the projection of $V \otimes W$
onto $K$-invariants $(V \otimes W)^K$; on the other hand, denoting
by $E_t \in \End(W)$ the operator $e^{t \Omega_K}$, 
we have for $t  > 0$

$$T = \pi ( p_t ) \otimes E_{2t} = \int_{G \times K}  (p_t(g) \pi (g)) \otimes  E_{2t}  \ dg$$
acts on $(V \otimes W)^K$ by $e^{t \Lambda_{\pi}}$.
(Here $p_t$ is as in \S \ref{casimirs}). 
Thus $Q \cdot T \cdot  Q$
certainly acts on $(V \otimes W)^K$ by the scalar $e^{t \Lambda_{\pi}}$.

On the other hand, expanding the expressions above, we see
that $Q\cdot T \cdot Q = \pi (\psi )$ where $\psi$ 
is the $\End(W)$-valued function on $G$ given by:
\begin{equation} \label{Bigop} 
\psi_{t}: g \mapsto  \int_{K \times K \times K }   p_t(k_1^{-1} g k_2^{-1})  \left(  \sigma(k_1  ) \cdot  E_{2t} \cdot  \sigma( k_2) \right) d \kappa d k_1 d k_2.
\end{equation}
Let us note that the norm of the right-hand side, with respect to the natural Hilbert norm on $\End(W)$, 
is bounded above by a constant multiple (depending on $M,\sigma$) of $\int_{K \times K} p_t(k_1^{-1} g k_2^{-1}) dk_1 dk_2$, 
since $p_t$ is {\em positive}. The integral $\int_{K \times K} p_t(k_1^{-1} g k_2^{-1}) dk_1 dk_2$ defines  a bi-$K$-invariant function on $G$
that is identified with the (usual) heat kernel on the symmetric space $S$; in particular, 
by the Cheng-Li-Yau bounds \cite{CLY}, it is bounded by  for $|t| \leq M$ by
$ c_1(M) t^{-d/2} \exp(-\frac{r^2}{5t})$, with $r$ the distance between $gK$ and the identity coset.

We now apply this construction with $V$ equal to the underlying $(\mathfrak{g} , K)$-module of the
{\em right} regular representation $L^2(G)$; and $W =  \bigwedge^k \mathfrak{p}^* \otimes E$.
In this case $V$ is no longer irreducible; however, breaking it into irreducibles and applying \eqref{Kuga}, we see
that the operator \eqref{Bigop} acts on $(V \otimes W)^K = D_k(V)^K$ 
in the same way as $e^{-t \Delta_k^{(2)}}  e^{t \Lambda_{\rho}}$, where $\Lambda_{\rho}= \rho (\Omega )$ is the Casimir eigenvalue 
of the $G$-representation $E$. 

In other words, if we identify 
 elements in $\Omega^k$ with functions on $G$ as specified in \S \ref{erhodiff}, we have:
$$(e^{-t\Delta_k^{(2)}} f) (g) = e^{-t \Lambda_{\rho}}  \int_G  \psi_t(g^{-1} g') f(g') dg', \ \ \ \forall f \in  \Omega^k_{(2)}(S,E).$$
Passing to the associated section of $\mathcal{E}_k$ gives the desired assertion. 
\end{proof}

\section{A ``limit multiplicity formula'' for torsion.} \label{heatkernel2}

We continue with the notations of the previous section; 
let $\Gamma$ be a cocompact torsion-free subgroup of $G$ and let $X = \Gamma \backslash S$. 
Set $V$ to be the vector bundle on $X$ induced by $\rho$ (i.e., the quotient of the total space
$E_{\rho} \times S$ by the $\Gamma$-action). 
 Let $\Delta_k$ be the Laplacian on $E_{\rho}$-valued $k$-forms on $X$.  

Note that $H^*(X; V)$ is isomorphic to $H^*(\Gamma;V)$. We say, accordingly, that $\rho$ is acyclic for $\Gamma$ when  $H^k (X; V) = 0$ for each $k$; 
equivalently, the smallest eigenvalue of each $\Delta_k$ is positive. 
We say that $\rho$ is {\it strongly acyclic} if there exists some uniform positive constant $\eta = \eta (G) > 0$ such that every eigenvalue of every 
$\Delta_k$ {\em for any choice of $\Gamma$} is $\geq \eta$. The remarkable fact is that there exists a large and interesting supply of strongly acyclic representations;
see \S \ref{examples}.  To check this, the following will be necessary:
\begin{lem} \label{SA}
Suppose that the isomorphism class of $d\rho: \mathfrak{g} \rightarrow \mathfrak{gl}(E_{\rho})$
is not preserved under the Cartan involution. 
Then $\rho$
  is strongly acylic. 
\end{lem}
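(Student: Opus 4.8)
The plan is to reduce strong acyclicity to a statement about Casimir eigenvalues, and then use the hypothesis on the Cartan involution to separate the Casimir eigenvalue of $\rho$ from the Casimir eigenvalues of the unitary representations that can contribute to $L^2$-cohomology. First I would recall from \S\ref{casimirs}--\S\ref{erhodiff} that, by Kuga's lemma \eqref{Kuga}, if $\pi$ is an irreducible unitary $G$-representation with Casimir eigenvalue $\Lambda_\pi$, then the Laplacian $\Delta_k$ acts on the space of $K$-invariants $\left(\bigwedge^k \mathfrak{p}^* \otimes E_\rho \otimes \pi\right)^K$ by the scalar $\Lambda_\rho - \Lambda_\pi$. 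Decomposing $L^2(\Gamma\backslash G)$ into irreducibles $\pi$, every eigenvalue of $\Delta_k$ on $X = \Gamma\backslash S$ is of the form $\Lambda_\rho - \Lambda_\pi$ for some $\pi$ occurring in $L^2(\Gamma\backslash G)$ with $\left(\bigwedge^k \mathfrak{p}^*\otimes E_\rho \otimes \pi\right)^K \neq 0$. Thus it suffices to produce a constant $\eta > 0$, depending only on $G$ and $\rho$ (not on $\Gamma$), such that $\Lambda_\rho - \Lambda_\pi \geq \eta$ for every such $\pi$; note $\pi$ need not be unitary here only in the degenerate sense, but the relevant $\pi$ are unitary since they sit inside $L^2$.

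Next I would note that the set of $\pi$ that can appear is constrained purely representation-theoretically, independent of $\Gamma$: for $\left(\bigwedge^k\mathfrak{p}^* \otimes E_\rho\otimes \pi\right)^K \ne 0$ we need $\Hom_K(\pi|_K, \bigwedge^k\mathfrak{p}\otimes E_\rho^*) \neq 0$, so $\pi$ must contain a $K$-type appearing in the finite-dimensional $K$-representation $\bigwedge^k\mathfrak{p}\otimes E_\rho^*$. By Vogan--Zuckerman / standard $(\mathfrak g, K)$-cohomology bounds (or simply the fact that an irreducible unitary representation with infinitesimal character of bounded size and a fixed minimal $K$-type lies in a finite set up to the relevant invariants), the value $\Lambda_\pi$ ranges over a set bounded above by a constant depending only on $G, \rho$. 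The key point — and this is where the Cartan involution hypothesis enters — is that equality $\Lambda_\rho = \Lambda_\pi$ with such a $\pi$ contributing to cohomology would force $\pi$ to be a cohomological representation with the same infinitesimal character as $E_\rho$; by Vogan--Zuckerman these are exactly the $A_{\mathfrak q}(\lambda)$ modules, and $H^*(\mathfrak g, K; A_{\mathfrak q}(\lambda)\otimes E_\rho) \neq 0$ pins down the infinitesimal character and forces a symmetry that, when $\delta(G)$ matters, relates $\rho$ to $\rho$ twisted by the Cartan involution. The hypothesis that the isomorphism class of $d\rho$ is \emph{not} Cartan-involution-invariant rules this out, so in fact $\Lambda_\rho \ne \Lambda_\pi$ for all relevant $\pi$, and since the $\Lambda_\pi$ form a discrete set (integrality of infinitesimal characters with bounded size), we get a uniform gap $\eta > 0$.

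The main obstacle I anticipate is making the last step genuinely uniform and clean: one must show not just that $\Lambda_\rho \neq \Lambda_\pi$ case by case, but that the relevant $\Lambda_\pi$ live in a \emph{finite} set (so the infimum of $|\Lambda_\rho - \Lambda_\pi|$ is attained and positive), and one must correctly identify which $\pi$ can contribute — here one wants to invoke the classification of unitary $(\mathfrak g,K)$-modules with nonzero $(\mathfrak g, K)$-cohomology (Vogan--Zuckerman, Borel--Wallach \cite{BorelWallach}), observe that for such $\pi$ the infinitesimal character equals that of $E_\rho^*$ up to Weyl group, and then translate the Cartan-involution condition on $d\rho$ into the statement that no such cohomological $\pi$ exists. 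A secondary subtlety is handling representations $\pi$ that contribute to $\Delta_k$ with small but nonzero eigenvalue (not harmonic): these have $\Lambda_\pi$ close to but not equal to $\Lambda_\rho$, and one must rule these out too — but this is automatic once the possible $\Lambda_\pi$ are known to be discrete with no accumulation at $\Lambda_\rho$, since their defining data (infinitesimal character bounded by $\|\rho\|$, fixed set of possible minimal $K$-types) lies in a finite set. I would package all of this by citing the cohomological classification rather than reproving it, so the real content is the short implication: $d\rho \not\cong d\rho \circ \theta$ $\implies$ no cohomological $\pi$ with $\Lambda_\pi = \Lambda_\rho$ $\implies$ uniform spectral gap.
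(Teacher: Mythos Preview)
Your reduction via Kuga's lemma to a uniform bound $\Lambda_\rho - \Lambda_\pi \geq \eta$ for all unitary $\pi$ with $\bigl(\bigwedge^k\mathfrak{p}^*\otimes E_\rho\otimes\pi\bigr)^K\neq 0$ is correct, and so is the observation that $\Lambda_\pi=\Lambda_\rho$ would make $\pi$ cohomological in the Vogan--Zuckerman sense, which the Cartan-involution hypothesis (via Borel--Wallach) excludes. The genuine gap is the passage from $\Lambda_\pi\neq\Lambda_\rho$ to a \emph{uniform} gap. Your claim that ``the $\Lambda_\pi$ form a discrete set (integrality of infinitesimal characters)'' is false: the unitary $\pi$ admitting a fixed $K$-type include continuous families --- e.g.\ unitary principal series $\pi(\theta,\nu)$ with $\nu$ varying over $i\mathfrak{a}_{\mathbb R}^*$ --- whose Casimir eigenvalues $\Lambda_{\pi(\theta,\nu)}$ vary continuously with $\nu$. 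Unitary representations need not have integral infinitesimal character, and fixing a minimal $K$-type does not collapse this continuum. So there is, a priori, nothing to stop $\Lambda_\pi$ from accumulating at $\Lambda_\rho$ from below; you have correctly identified this as the obstacle but your proposed resolution does not work.

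The paper closes this gap by a different mechanism: rather than arguing by discreteness, it invokes Parthasarathy's Dirac operator inequality. Writing $\wedge^*\mathfrak{p}$ as (one or two copies of) $S\otimes S$ with $S$ the spin module, the $K$-type condition forces a common $\mathfrak{k}_f$-constituent of $F\otimes S$ and $\pi\otimes S$; the Dirac inequality then yields the quantitative bound
\[
\Lambda_F - \Lambda_\pi \;\geq\; |\nu+\rho|^2 - \bigl|\tfrac12(\mu+\theta\mu)+w\rho\bigr|^2
\]
for some weight $\mu$ of $F$ and some $w\in W^1$. The right-hand side depends only on the \emph{finite} data $(\mu,w)$, so its minimum over this finite set is attained; the argument then shows this minimum is strictly positive precisely when $F\not\cong F^\theta$. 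In other words, the Dirac inequality supplies the missing quantitative lower bound directly, bypassing any need for discreteness of the $\Lambda_\pi$ themselves.
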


\proof  Set $F=E_{\rho}^*$; it is a finite dimensional representation of $\mathfrak{g}$. 
It is known \cite[\S VI, Thm. 5.3]{BorelWallach} that, if the isomorphism class of $F$ 
is not fixed by the Cartan involution,
then $\rho$ is acyclic (for any $\Gamma$ as above).
In fact, one of the arguments explained there shows strong acyclicity, as we explain.

Suppose that $\pi$ is an irreducible unitary representation for which  
\begin{equation} \label{fd}
\Hom_K(\wedge^* \mathfrak{p} \otimes F, \pi ) \neq 0.
\end{equation}
We show
that there exists a positive constant $\varepsilon$ depending only on $F$ such that 
$$\Lambda_{F} - \Lambda_{\pi} \geq \varepsilon.$$
We proceed as in the proof of \cite[\S II, Prop. 6.12]{BorelWallach}. 

Fix compatible (in the sense of \cite[\S II, 6.6]{BorelWallach}) positive root systems $\Delta^+$ and $\Delta_k^+$ for 
  $\mathfrak{g} \otimes \C$  and $\mathfrak{k}\otimes \C$ and denote by $\rho$ and $\rho_k$  the corresponding half-sums of positive roots. Let $W$ be the Weyl group of $\mathfrak{g}_{\C}$ and
set $W^1 = \{ w \in W \; : \; w \Delta^+ \mbox{ is compatible with } \Delta_k^+ \}$. Write $\nu$
for the highest weight of $F$; note that $\Lambda_{F} = |\nu + \rho |^2 - |\rho |^2$.

The orthogonal Lie algebra $\mathfrak{so} ( \mathfrak{p}) $ has a natural representation 
$S= \mathrm{spin} (\mathfrak{p})$. Since the adjoint action of $\mathfrak{k}$ on 
$\mathfrak{p}$ preserves the Killing form defining $\mathfrak{so} (\mathfrak{p})$, there
is a natural map $\mathfrak{k} \rightarrow \mathfrak{so} (\mathfrak{p})$; so $S$ may be regarded
as a representation of $\mathfrak{k}$. The exterior algebra $\wedge^* \mathfrak{p}$
is isomorphic to one or two copies (according to the parity of the dimension of $\mathfrak{p}$) of 
$S \otimes S$. Since $S$ is isomorphic to $S^*$ as a $\mathfrak{k}$-module, it follows from
\eqref{fd} that there exists a representation of $\mathfrak{k}$ occuring in both $F \otimes S$ and $\pi \otimes S$. 
Now \cite[\S II, Lem. 6.9]{BorelWallach} implies that every highest weight of $F \otimes S$ is of the form 
$\frac12 (\mu + \theta \mu) + w \rho - \rho_k$ where $\mu$ is a weight of $F$ and 
$w \in W^1$. Using the expression relating the square of the Dirac operator $D$ to the Casimir operator (see \cite[\S II, Lem. 6.11]{BorelWallach}), and
the positivity of $D$, we conclude that there exists a weight $\mu$ of $F$ and an element $w$ of $W^1$ such that 
\begin{equation*} \label{dirac}
|\frac12 (\mu + \theta \mu) + w \rho |^2  \geq \Lambda_{\pi} + |\rho|^2.
\end{equation*}
This inequality is a refined version of Parthasarathy's Dirac operator inequality \cite[(2.26)]{P}. 
It implies that:
\begin{equation} \label{dirac2}
\Lambda_F - \Lambda_{\pi} \geq |\nu + \rho |^2 - |\frac12 (\mu + \theta \mu) + w \rho |^2.
\end{equation}
Now since 
$|\frac12 (\mu + \theta \mu ) +  w \rho |^2 \leq |\mu + w \rho |^2$, the left-hand side of \eqref{dirac2} is nonnegative: 
In fact $\mu + w \rho$ is a weight of the finite-dimensional representation of $\mathfrak{g}$
given by $F \otimes F_{\rho}$, where $F_{\rho}$ has highest weight $\rho$; thus
$$|\mu  + w \rho| \leq |\nu  + \rho|,$$
with equality if and only if there exists $u \in W$ with $\mu + w \rho = u(\nu+\rho)$. 
But since $w\rho$ is $\theta$-invariant we conclude that 
the left-hand side of \eqref{dirac2} vanishes if and only if 
$$|\nu+ \rho|^2 = \left|\frac12  \left( u (\nu+\rho)+ \theta u (\nu+\rho)\right)\right|^2.$$
This forces $u (\nu+\rho)$ to be $\theta$-fixed, i.e., the isomorphism class of $F$ to be $\theta$-fixed. 
\qed

From now on we assume that $\rho$ is strongly acyclic.

We denote by $T_{X}(\rho)$ the analytic torsion of $(X, E_{\rho})$, defined as (cf. \S \ref{myCM}):
$$\log T_{X}(\rho)   =  \frac12 \sum_{k \geq 0} (-1)^{k+1} k  \  \log \detp  \Delta_k;$$
we recall the definition of $\detp \Delta_k$ in \S \ref{bel} below. 
It follows from \eqref{Muller} and \eqref{antor} that, if $\rho$ is strongly acyclic and
we are given a local system $M$ of free $\Z$-modules with an isomorphism $M \otimes \C 
\stackrel{\sim}{\rightarrow} E_{\rho}$, we have: %
\begin{eqnarray} \label{TX=tor}
\log T_X (\rho) =  - \sum \- {}^* \log |H^i ( X; M)|.
\end{eqnarray}

\subsection{} \label{bel}
Although already discussed -- somewhat informally -- in \S \ref{myCM}, let us recall the definition
of $\detp \Delta_k$. 

The Laplacian $\Delta_k$ is a symmetric, positive definite, elliptic operator with pure point spectrum
$$0 < \lambda_1 \leq \lambda_2 \leq \cdots \rightarrow +\infty$$
and, writing $e^{-t \Delta_k} (x,y)  \ \ (x,y \in X) $ for the integral kernel representing the heat kernel on $k$-forms on $X$,
$${\rm Tr} e^{-t \Delta_k} = \int_X {\rm tr} (e^{-t\Delta_k} (x,x) )dx = \sum_{j=1}^{+\infty} e^{-t\lambda_j}$$
is convergent for each positive $t$.
We may thus define
$$\log \detp \Delta_k = - \frac{d}{ds} \zeta_k (s; \rho)_{| s=0}$$
where the function $\zeta_k$ is the unique meromorphic function \cite{Seeley}
satisfying
$$ \zeta_k (s; \rho) =  \frac{1}{\Gamma (s)} \int_0^{+\infty} t^{s-1} {\rm Tr} e^{-t\Delta_k} dt = \sum_{j=1}^{+\infty} \lambda_j^{-s}. $$
for ${\rm Re}(s)$ sufficiently large.

 \subsection{$\Delta_k$ and $\Delta_k^{(2)}$.} 
Our goal is to relate the analytic torsion on $X$ to computations on the universal covering $S$; in particular,
as the injectivity radius of $X$ tends to infinity, to approximate the former by the latter. 

Indeed, if we write $e^{-t \Delta_k} (x,y)$ $(x,y \in X)$ for the integral
kernel representing the heat kernel on $k$-forms on $X$, then 
for each positive $t$ and for each integer $i$ we have:
\begin{eqnarray} \label{sum}
e^{-t \Delta_k} (x,y) = \sum_{\gamma \in \Gamma} (\gamma_y )^* e^{-t \Delta_k^{(2)}} (\tilde{x}, \gamma \tilde{y}),
\end{eqnarray}
where $\tilde{x}, \tilde{y}$ are lifts of $x,y$ to $S$; by $(\gamma_y)^*$, 
we mean pullback by the map $(x,y) \mapsto (x, \gamma y)$. 
The sum converges absolutely and uniformly for $\tilde{x}, \tilde{y}$ in compacta: 
by cocompactness of $\Gamma$ and bounds on volume growth, 
there  exists constants $c_i = c_i(\Gamma, G)$, $i=2,3$, such that for any $x,y \in S$
\begin{eqnarray} \label{gauss}
N(x,y;R):=\left|\{ \gamma \in \Gamma \; : \; d(\tilde{x},\gamma \tilde{y}) \leq R \} \right| \leq c_2 e^{c_3 R},
\end{eqnarray}
which taken in combination with Lemma \ref{Lhk} gives the absolute convergence.
In particular, when the injectivity radius is big, a single term in \eqref{sum} will dominate.

\subsection{$L^2$-torsion.}
 
The trace of the ($S$-)heat kernel $e^{-t \Delta_k^{(2)}}(x,x)$ on the diagonal
is independent of $x$, because it is invariant under $G$. 
 We will prove in  \S \ref{explicit} that the integral 
$$\frac{1}{\Gamma (s)} \int_0^{+\infty} t^{s-1} \Tr  \ e^{-t\Delta_k^{(2)}} (x,x) dt$$
is absolutely convergent for ${\rm Re} (s)$ sufficiently large and extends to a meromorphic function of $s \in {\Bbb C}$
which is holomorphic at $s=0$.  
 
Define $t_S^{(2)}(\rho)$ via
\begin{equation} \label{L2analyticTorsion}
t_S^{(2)} (\rho) = \frac12 \sum_{k \geq 0} (-1)^k k \left( \frac{d}{ds} \Big |_{s=0} \frac{1}{\Gamma (s)} 
\int_0^{+\infty} t^{s-1}  \Tr \ e^{-t \Delta_k^{(2)}}(x,x)  dt \right).
\end{equation}
(The product of $t_S^{(2)}(\rho)$ with the volume of $X$
is the $L^2$-analytic torsion of $X$.)
It is possible to compute $t_S^{(2)}(\rho)$ in a completely explicit fashion -- it is an explicit quantity depending on $\rho$ -- and we carry this out in \S \ref{explicit}.  Thus the following Theorem shows that we can approximate the torsion
of any ``large'' quotient $X$ by an explicitly computable quantity.

\begin{thm} \label{approxthm}
Let $\rho : G  \rightarrow \GL (E)$ be a strongly acyclic representation. Then,
$$\frac{\log (T_{X} (\rho))}{{\rm vol} (X)} \rightarrow t_S^{(2)} (\rho)$$
when $\Gamma$ varies through any sequence of subgroups for which the injectivity
radius of $X = \Gamma \backslash S$ goes to $\infty$. 
\end{thm}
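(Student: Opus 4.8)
The plan is to compute $\log T_X(\rho)$ via the heat-kernel definition of each $\log \detp \Delta_k$ and to compare, term by term in $t$, the heat trace on $X$ with $\vol(X)$ times the $L^2$-heat trace on $S$. Concretely, I would split the Mellin integral defining $\zeta_k(s;\rho)$ at $t=1$:
\begin{equation*}
\log \detp \Delta_k = -\frac{d}{ds}\Big|_{s=0} \frac{1}{\Gamma(s)}\left( \int_0^1 + \int_1^\infty \right) t^{s-1} \Tr e^{-t\Delta_k}\, dt,
\end{equation*}
and similarly for the $L^2$-torsion integrand $\Tr e^{-t\Delta_k^{(2)}}(x,x)$. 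Using \eqref{sum}, write $\Tr e^{-t\Delta_k} = \sum_{\gamma \in \Gamma} \int_X \tr\big((\gamma_x)^* e^{-t\Delta_k^{(2)}}(\tilde x, \gamma \tilde x)\big)\,dx$; the identity contribution $\gamma = 1$ equals exactly $\vol(X)\cdot \Tr e^{-t\Delta_k^{(2)}}(x,x)$, so the error is governed entirely by the non-identity terms.

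For the \emph{large-$t$} part, strong acyclicity is what saves us: every eigenvalue of every $\Delta_k$ on $X$ is $\geq \eta > 0$ uniformly, so $\Tr e^{-t\Delta_k} \leq (\text{Weyl-law count}) \cdot e^{-\eta t/2}$ decays exponentially, uniformly across the tower (one needs a uniform upper bound on the number of eigenvalues below a given level, which follows from cocompactness and elliptic estimates, scaled by $\vol(X)$). Hence $\int_1^\infty t^{s-1}\Tr e^{-t\Delta_k}\,dt$ converges and defines a function holomorphic at $s=0$ whose derivative at $0$ is $O(\vol(X)\, e^{-\eta/4})$ — but more importantly, dividing by $\vol(X)$, the large-$t$ piece of $\frac{1}{\vol(X)}\log\detp\Delta_k$ differs from the corresponding $L^2$ quantity by a term that tends to $0$: the alternating sum of the discrete-spectrum contributions matches the alternating sum of the $L^2$-spectral-side contributions up to an error controlled by the difference of heat traces, which by the same reasoning as small $t$ (below) is $o(\vol(X))$. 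For the \emph{small-$t$} part, I would bound $\sum_{\gamma \neq 1} \|(\gamma_x)^* e^{-t\Delta_k^{(2)}}(\tilde x,\gamma\tilde x)\|$ using Lemma \ref{Lhk} and the volume-growth bound \eqref{gauss}: the sum is $\leq c_1 t^{-d/2}\sum_{\gamma\neq 1} c_2 e^{c_3 d(\tilde x,\gamma\tilde x)} e^{-d(\tilde x,\gamma\tilde x)^2/5t}$, and since every nontrivial $\gamma$ moves $\tilde x$ by at least $2\,\mathrm{injrad}(X) =: 2R$, a dyadic decomposition over shells $d(\tilde x,\gamma\tilde x)\in[2^j R, 2^{j+1}R)$ gives a bound of the form $C t^{-d/2} e^{-R^2/6t}$ for $t\leq 1$, which after integration $\int_0^1 t^{s-1}(\cdots)\,dt$ and differentiation at $s=0$ is exponentially small in $R$ — in particular $o(1)$ after dividing by $\vol(X)$ (in fact $o(\vol(X))$ even before dividing, uniformly in $x$). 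The one subtlety is that $\frac{1}{\Gamma(s)}$ has a zero at $s=0$, so only the \emph{pole part} of the Mellin transform contributes to the derivative at $s=0$; the small-$t$ heat-trace asymptotic $\Tr e^{-t\Delta_k} \sim \sum_j a_j t^{(j-d)/2}$ has coefficients $a_j$ that are integrals over $X$ of local curvature invariants, hence are \emph{exactly} $\vol(X)$ times the corresponding $L^2$-densities, so the potentially divergent pieces cancel precisely in the difference $\log\detp\Delta_k - \vol(X)\cdot(L^2\text{-analog})$, leaving only the convergent remainder that we have just shown is $o(\vol(X))$.

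Assembling: for each $k$, $\frac{1}{\vol(X)}\log\detp\Delta_k \to \frac{d}{ds}|_{s=0}\frac{1}{\Gamma(s)}\int_0^\infty t^{s-1}\Tr e^{-t\Delta_k^{(2)}}(x,x)\,dt$, where the right side is well-defined by the meromorphic-continuation statement promised for \S\ref{explicit}. Taking the alternating weighted sum $\frac12\sum_{k\geq 0}(-1)^{k+1}k(\cdots)$ gives $\frac{\log T_X(\rho)}{\vol(X)} \to t_S^{(2)}(\rho)$, as claimed. The main obstacle, and the step requiring the most care, is making the large-$t$ comparison uniform across the tower — i.e. showing that the discrete spectral data of $X$ converges, after normalization by $\vol(X)$, to the Plancherel/$L^2$ data of $S$ in a way strong enough to control the \emph{logarithm} of the determinant and not merely the heat trace at a fixed time; this is where one invokes strong acyclicity (to stay away from $\lambda = 0$, where $\log$ is singular) together with uniform elliptic estimates, and it is essentially a "limit multiplicity" argument, which is why the section is titled as it is. The small-$t$ estimate, by contrast, is a routine if slightly technical Gaussian-bound computation.
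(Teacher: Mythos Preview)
Your approach is the paper's, and the ingredients (Gaussian heat-kernel bound for bounded $t$, strong acyclicity for large $t$) are the right ones, but there is an organizational gap in the large-$t$ step. Splitting at a \emph{fixed} point $t=1$ does not close the argument: strong acyclicity gives a pointwise bound $\Tr e^{-t\Delta_k}(x,x)\leq e^{-\eta(t-1)}\Tr e^{-\Delta_k}(x,x)\leq c'e^{-\eta(t-1)}$ with $c'$ depending only on $G,\rho$ (via Lemma \ref{Lhk} at $t=1$), and similarly for the $L^2$-kernel, so the normalized difference $A_k(t):=\frac{1}{\vol(X)}\int_X(\Tr e^{-t\Delta_k^{(2)}}(\tilde x,\tilde x)-\Tr e^{-t\Delta_k}(x,x))\,dx$ satisfies $|A_k(t)|\leq 2c'e^{-\eta(t-1)}$ for $t\geq 1$. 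But this is only \emph{uniformly bounded}; it does not tend to $0$ as the injectivity radius grows. Your phrase ``by the same reasoning as small $t$'' is incorrect here: Lemma \ref{Lhk} is stated only for $t\leq M$ and gives no control for $t\to\infty$. No limit-multiplicity argument is needed either.

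The paper's fix is a two-parameter $\epsilon$-argument: given $\nu>0$, first choose $M=M(G,\rho,\nu)$ so large that $\bigl|\int_M^\infty A_k(t)\,\frac{dt}{t}\bigr|<\nu$ \emph{uniformly in $\Gamma$} (possible by the exponential bound above); then, for this fixed $M$, the Gaussian estimate applies on $(0,M]$ and gives $\int_0^M A_k(t)\,\frac{dt}{t}\to 0$ as the injectivity radius $\to\infty$. Your discussion of pole cancellation at $s=0$ is also an unnecessary detour: because $A_k(t)$ is itself exponentially small as $t\to 0^+$ (the local heat coefficients agree exactly, as you note), the integral $\int_0^\infty t^{s-1}A_k(t)\,dt$ is already holomorphic at $s=0$, so $\frac{d}{ds}\big|_{s=0}\frac{1}{\Gamma(s)}\int_0^\infty t^{s-1}A_k(t)\,dt=\int_0^\infty A_k(t)\,\frac{dt}{t}$ with nothing to subtract. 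Working with the difference $A_k$ from the outset, rather than with the two $\zeta$-functions separately, is what makes this clean.
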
 

We conjecture that this remains valid without the {\em strongly acyclic} assumption.  
We are unable to make significant progress without
ruling out {\em eigenvalues on $k$-forms very close to zero}. At a combinatorial level this seems to be equivalent to exceptionally difficult problems of diophantine approximation. 

The scheme of proof will be, when unwound, very close to the latter part of the discussion in \S \ref{circle2}.
In the language of \S \ref{circle2}, we already observed in that context that difficulties arose 
when ${\rm Im}(\lambda_j) = 0$, which manifested themselves
also arithmetically through \eqref{mahler}.  This is the issue of ``small eigenvalues''. 

\begin{proof} Let $\ell$ be the length of the shortest closed geodesic on $X$. Assume for simplicity that $\ell \geq 1$. 
Let 
$$A_k(t) = \frac{1}{{\rm vol} (X)} \int_{X} \left( \Tr \ e^{-t \Delta_k^{(2)}} (\tx,\tx) - \Tr \  e^{-t \Delta_k} (x,x) \right)dx,$$
where $\tx$ denotes any lift of $x$ to $S$ (in fact,  $ \Tr \ e^{-t \Delta_k^{(2)}}$ 
is constant, and so we could replace $\tx$ by a fixed basepoint in what follows). 

The following lemma is a slight modification of \cite[Lemma 4]{Lott}: 

\begin{lema} 
Let $M$ be a real number $\geq 1$. There exists a constant $c = c(M, \rho, G)$ such that for any $t \in (0, M]$,
\begin{equation} \label{upperest} 
\left|   A_k (t) \right| \leq c t^{-(d+1)/2} e^{-\frac{(\ell-ct)^2}{5t}}.
\end{equation} 
\end{lema}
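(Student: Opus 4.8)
The plan is to bound $A_k(t)$ by comparing the two heat kernels termwise via the pre-trace formula \eqref{sum}. First I would write, using \eqref{sum} with $x = y$,
\[
\Tr \ e^{-t\Delta_k}(x,x) = \Tr \ e^{-t\Delta_k^{(2)}}(\tx,\tx) + \sum_{\gamma \in \Gamma \setminus \{1\}} \Tr \left( (\gamma_x)^* e^{-t\Delta_k^{(2)}}(\tx, \gamma\tx) \right),
\]
so that $A_k(t)$ is $-\frac{1}{\vol(X)}\int_X \sum_{\gamma \neq 1} \Tr\left((\gamma_x)^* e^{-t\Delta_k^{(2)}}(\tx,\gamma\tx)\right) dx$. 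The absolute value of each summand is controlled by $\|e^{-t\Delta_k^{(2)}}(\tx,\gamma\tx)\|$ times the rank of $\DkE$, which is a constant depending only on $\rho$ and $G$; here the operator norm on the fiber of $\mathscr{E}_k$ majorizes the trace up to that constant.

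The key estimate is then Lemma \ref{Lhk}: for $|t| \leq M$ we have $\|e^{-t\Delta_k^{(2)}}(\tx,\gamma\tx)\| \leq c_1 t^{-d/2} \exp(-r_\gamma^2/5t)$, where $r_\gamma = d(\tx,\gamma\tx)$. Since $\Gamma$ acts freely with shortest closed geodesic of length $\ell$, every nontrivial $\gamma$ moves $\tx$ by at least $\ell$, so $r_\gamma \geq \max(\ell, d(\tx,\gamma\tx))$; more precisely I would keep $r_\gamma \geq \ell$ and also use $r_\gamma$ itself for the volume count. Summing over $\gamma$ and invoking the exponential volume growth bound \eqref{gauss}, i.e.\ $N(x,x;R) \leq c_2 e^{c_3 R}$, I would estimate
\[
\sum_{\gamma \neq 1} \exp\!\left(-\frac{r_\gamma^2}{5t}\right) \leq \int_{\ell}^{\infty} \exp\!\left(-\frac{R^2}{5t}\right) dN(x,x;R),
\]
and integrate by parts (or decompose into dyadic shells $r_\gamma \in [\ell + j, \ell + j + 1)$): the number of $\gamma$ with $r_\gamma$ in such a shell is $\ll e^{c_3(\ell + j)}$, and $\sum_j e^{c_3(\ell+j)} e^{-(\ell+j)^2/(5t)}$ is, by completing the square, bounded by $e^{c_3^2 \cdot 5t/4} e^{-\ell^2/(5t) + \text{(lower order)}}$, which one packages into the stated form $e^{-(\ell - ct)^2/(5t)} = e^{-\ell^2/(5t) + 2c\ell/5 \cdot (\ell/\ell) \cdots}$ — more honestly, $-(\ell-ct)^2/(5t) = -\ell^2/(5t) + 2c\ell/5 - c^2 t/5$, so choosing $c$ large enough absorbs both the $e^{c_3 \ell}$ prefactor (via $2c\ell/5 \geq c_3 \ell$) and the Gaussian-integral constant. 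The extra factor $t^{-(d+1)/2}$ rather than $t^{-d/2}$ comes from converting the sum over the discrete shells to the continuous Gaussian integral, which on a $d$-dimensional space contributes an additional $t^{1/2}$; equivalently it accounts for the one-dimensional integral over $R$.

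The main obstacle is the bookkeeping in the dyadic-shell sum: one must track how the exponential volume growth constant $c_3$ interacts with the Gaussian decay so that the final bound genuinely has the clean form \eqref{upperest} with a single constant $c = c(M,\rho,G)$, uniformly for $t \in (0,M]$ — in particular the small-$t$ behavior (where $t^{-(d+1)/2}$ blows up but $e^{-\ell^2/(5t)}$ wins once $\ell \geq 1$) and the way $c$ must be taken large enough that $\ell - ct$ may go negative without the bound becoming vacuous. Since this is exactly the estimate of \cite[Lemma 4]{Lott} with the coefficient bundle $E_\rho$ carried along (which only changes constants, by the strong acyclicity-independent Lemma \ref{Lhk}), I would follow Lott's argument and simply indicate the modifications needed to accommodate the nontrivial local system and the explicit Gaussian exponent $r^2/5t$.
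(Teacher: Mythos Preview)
Your approach is essentially the same as the paper's: express $A_k(t)$ via the pre-trace formula as a sum over $\gamma \neq e$, apply Lemma \ref{Lhk} termwise, and control the resulting lattice sum using the growth bound \eqref{gauss}. The paper does the last step by Stieltjes integration by parts against the counting function $N(x,x;r)$ rather than dyadic shells --- differentiating $e^{-r^2/5t}$ in $r$ produces a factor $r/t$, and that is the honest source of the extra $t^{-1/2}$ (your explanation of this factor is muddled, but harmless: the dyadic-shell argument in fact yields the slightly stronger exponent $t^{-d/2}$, which implies the stated one for $t\le M$).
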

\begin{proof} We may rewrite (\ref{sum}) as 
\begin{eqnarray*} \label{dif}
e^{-t\Delta_k } (x,x) - e^{-t \Delta_k^{(2)}} (\tx,\tx) = \sum_{\gamma \in \Gamma, \ \gamma \neq e}
 (\gamma_y )^* e^{-t \Delta_k^{(2)}} (\tx,\gamma \tx).
\end{eqnarray*}
It follows from lemma \ref{Lhk} and (\ref{gauss}) that, up to constants, the last sum is bounded by
\begin{eqnarray*}
\int_{\ell}^{+\infty} t^{-d/2} e^{-r^2/5t} d(N(x,x;r) - N(x,x;\ell)) \leq c_2 \int_{\ell}^{+\infty} \left| \frac{d}{dr} \left\{ t^{-d/2} e^{-r^2/5t} \right\} \right| e^{c_3 r} dr.
\end{eqnarray*}
And the last integral above is, up to a constant, bounded by
\begin{eqnarray*}
t^{-(d+1)/2} \int_{\ell}^{+\infty}  e^{-r^2/5t} r e^{c_3 r} dr & \leq & {\rm const} \cdot t^{-(d+1)/2} e^{-(\ell -3c_3t)^2/5t} \int_{\ell}^{+\infty} r e^{-c_3r/5} dr   \\
& \leq & {\rm const} \cdot t^{-(d+1)/2} e^{-(\ell -3c_3 t)^2/5t} .
\end{eqnarray*} 
Integrating (\ref{dif}) over $X$ thus gives the lemma. 
\end{proof}

It follows from \eqref{upperest} that 
$$\int_0^{+\infty } t^{s-1} A_k (t)  dt$$
is holomorphic in $s$ in a half-plane containing $0$, so that:
$$
\frac{d}{ds} \Big|_{s=0} \frac{1}{\Gamma (s)} \int_0^{+\infty} t^{s-1}A_k (t) dt 
= \int_0^{+\infty} A_k (t)  \frac{dt}{t}.
$$
It thus follows from the definitions that 
\begin{eqnarray} \label{difTor}
t_S^{(2)} (\rho) -  \frac{\log (T_{X} (\rho))}{{\rm vol} (X)}  = \frac12 \sum_{k=0}^{d} (-1)^k k
\int_0^{+\infty} A_k(t) \frac{dt}{t}.\end{eqnarray}
It is to handle the ``large $t$'' contribution to $\int_0^{\infty} A_k(t) \frac{dt}{t}$ that  ``strong acyclicity'' of $\rho$ enters. Let $\eta$ be so that every eigenvalue of $\Delta_k$ is $\geq \eta$.  Now, for any $t \geq1$, 
spectral expansion on the compact manifold $\Gamma \backslash S$ shows that:
$$ \trace e^{-t\Delta_k}(x,x) \leq e^{-\eta (t - 1)}   \ \trace e^{-\Delta_k }(x,x).$$
We apply \eqref{sum} and Lemma \ref{Lhk} with $M$ replaced by $1$, say,
to estimate the latter quantity, arriving at 
\begin{eqnarray} \label{upperest2}
\trace e^{-t \Delta_k(x,x)} \leq c' e^{-\eta(t-1)},
\end{eqnarray} 
where the constant $c'$ only depends on $\rho$ and $G$.
Thus, both 
$$\int_{1}^{\infty} t^{-1} e^{-t \Delta_k^{(2)}} (\tx,\tx) dt, \ \ \int_1^{\infty} t^{-1} e^{-t \Delta_k} (x,x) dt, $$
are absolutely convergent -- the former conclusion follows, for instance, from Lemma \ref{Lhk}. Thus, given $\nu > 0$ arbitrary, there exists a constant $M \geq 1$ depending only on $G$, $\rho$ and $\nu$ such that 
\begin{eqnarray} \label{L3}
\left| \int_{M}^{+\infty} t^{-1} A_k (t) dt \right| \leq \nu.
\end{eqnarray}
But it follows from \eqref{upperest} that 
$$\left| \int^{M}_0 t^{-1} A_k (t) dt \right| \rightarrow 0$$
as the injectivity radius $\ell$ tends to $\infty$. 
In particular, this shows that the right hand side of \eqref{difTor} approaches zero 
as the injectivity radius $\ell$ approaches $\infty$. 
\end{proof}

\section{Explicit evaluation of the $L^2$-torsion for unimodular local systems} \label{explicit}

In this section, we shall compute explicitly the asymptotic constants
of Theorem \ref{approxthm}.  The contents of this section are extensions of computations from 
\cite{Olbrich} and also relate to computations performed in \cite{CV}. 
 
\subsection{Notation} 

We continue with notation as in \S \ref{heatkernel1} {\em with the following change:} in what follows, if $H$ is a real Lie group, we will write $\mathfrak{h}$ for the {\em complexification} of its real Lie algebra, and $\mathfrak{h}_{\R}$ for the Lie algebra of $H$.

\subsubsection{Groups and subgroups}
We have already defined $K \subset G$ and $U$ a compact form of $G$. Note that we may identify the complexified Lie algebra $\mathfrak{u}$ of $U$ and the complexified Lie algebra of $\mathfrak{g}$. Let $S = G/K$ be the Riemannian symmetric space associated to $G$ and $S^c = U /K$ be its compact dual. 
Let $\Theta$ be the Cartan involution of $G$ fixing $K$. 

Fix a maximal torus $T_f \subset K$ with complexified Lie algebra $\bt$. 
Extend it to a $\Theta$-stable maximal torus $\mathfrak{t}_{U} = \bt \oplus  \at \subset \gcompact$,
where $\at \subset \p$ is the complexification of an abelian subspace $\atr \subset \p_{\R}$.  Therefore, $\bt \oplus  \at$ is a ``fundamental Cartan subalgebra,'' i.e. one with maximal compact part.  
We extend $\mathfrak{a}_{0\R}$ to an ``Iwasawa'' space $\a_{\R} \subset \p_{\R}$ -- that is to say, a maximal abelian semisimple subspace.

Let $T_U \subset U,  W_U \subset \mathrm{Aut}(\mathfrak{t}_U)$ be the maximal tori and Weyl groups that correspond to 
$\mathfrak{t}_U \subset \mathfrak{g}$.

Let $A_f = \exp(\atr)$. 
Then $A_f$ is the split part of a fundamental parabolic subgroup $P_f = M_f A_f N_f$;
here $M_f A_f$ is the centralizer in $G$ of $\at$, and the $\Theta$-stable subgroup $M_f$ is 
a certain canonical complement to $A_f$ (see \cite[Chapter V, \S 5]{Knapp}). 

 Let $K_f = K \cap M_f$; it is a maximal
compact subgroup of $M_f$; we denote by $M_f^0$ and $K_f^0$ 
the connected components of $M_f$ and $K_f$ respectively.  Denote by $\k_f$ and $\m_f$
the complexified Lie algebras of $K_f$ and $M_f$. 
Note that $T_f \subset M_f$ and $T_f \subset K_f^{0}$.

 We let $W_f$ be the Weyl group for $\bt \subset \k_f$
and $W_{M_f}$ be the Weyl group of $\bt \subset \m_f$. 
Note that, in general, the natural injection $W_f \hookrightarrow W_{M_f}$ may not be surjective.

We say, as usual, that a parabolic subgroup is standard 
if it contains $A=\exp(\mathfrak{a}_{0\R})$. 
Finally let $Q$ be another standard parabolic subgroup with Levi decomposition\footnote{
Note that neither $M_Q$ nor $M_f$ need be connected.  }
$Q = M_Q A_Q N_Q$.

\subsubsection{Bilinear forms.}
We equip $\mathfrak{t}_U$ with the bilinear form which is the negative of the Killing form for $U$;
we equip $\at$ and $\a$ with the bilinear form defined by the Killing form for $G$. 
These forms are all nondegenerate, and thereby equip the dual spaces $\mathfrak{t}_U^*, \at^*, \a^*$ with bilinear forms;
if we write, for instance, $\langle \alpha, \beta \rangle$ for some $\alpha, \beta \in \a^*$, 
it always means that this is taken with respect to the bilinear form just normalized. 

\subsubsection{Systems of positive roots} \label{roots}
We choose a system $\Delta^+(\bt, \m_f)$ of positive roots for the action of 
$\bt$ on $\m_f$; it restricts to a system of positive roots 
  for $\bt$ on $\k_f$. We extend it to a system $\Delta^+ = \Delta^+ (\mathfrak{t}_U, \mathfrak{u})$ of positive roots
  for the action of $\mathfrak{t}_U$ on $\mathfrak{u}$: a root $\alpha$ of $\mathfrak{t}_U$ on $\mathfrak{u}$
  is positive if and only if  either the corresponding root space 
  belongs to $\mathfrak{n}_f$, the complexified Lie algebra of $N_f$; or, otherwise,  
$\alpha$ is trivial on $\mathfrak{a}_0$ and $\alpha \in \Delta^+(\bt, \m_f)$. 
  
Let $\rho_U \in \mathfrak{t}_U^*$
be the half-sum of positive roots in $\Delta^+$. 
Define similarly $\rho_{M_f}, \rho_{K_f}$.

\subsubsection{Representations}
Let $\rho_{\lambda}$ be an irreducible  representation of $U$ with (dominant) highest weight $\lambda \in \mathfrak{t}_U^{*}$; it extends to a unique holomorphic representation of $G_{\C}$, which we also denote $\rho_{\lambda}$.    

Given $\theta$ a unitary representation of $M_Q$ and $\nu \in \mathfrak{a}_Q^* =  \Hom(\mathfrak{a}_Q, \C)$ we may construct a (generalized) principal series representation $\pi(\theta, \nu)$
of the group $G$; its restriction to $K$ is isomorphic to $\mathrm{Ind}^{K}_{(K \cap M_Q)} \theta$. 
We normalize so that $\pi(\theta,\nu)$ is tempered unitary if $\theta$ is tempered
and $\nu|\mathfrak{a}_{Q, \mathbb{R}}$ is valued in $i \mathbb{R}$. 
Note that this principal series is not necessarily irreducible;
however, for $\theta$ irreducible and generic $\nu$, it is indeed irreducible: \cite[Theorem 7.2]{Knapp}.

For each $w \in W_U$, let $\mu_w$ be the restriction of $w (\rho_U + \lambda)$ to $\mathfrak{b}$. 
For any $\nu \in \mathfrak{a}^*$, we regard $\mu_w + \nu$ as defining an element
of $(\mathfrak{a} \oplus \mathfrak{b})^* = \mathfrak{t}_U^*$, in the obvious way. Put 
$$P_w(\nu) = \sign(w)  \prod_{\alpha \in 
\Delta^+(\mathfrak{t}_U, \mathfrak{u}) } \frac{\langle \mu_w+ \nu, \alpha \rangle}{\langle \rho_U , \alpha \rangle}, $$
where $\langle -, - \rangle$ is induced by the Killing form, as described above. 
Then $P_w$ depends only on the coset of $w$ in $W_{M_f} \backslash W_U$. 
Indeed, 
for $w' \in W_{M_f}$, 
$  \prod_{\alpha} \langle \mu_{w'w} + \nu, \alpha \rangle =
  \prod_{\alpha} \langle \mu_{w} + \nu , w'^{-1} \alpha\rangle$
and the set $w'^{-1} \alpha$ coincides with the set of $\alpha$ 
{\em after} a number of sign changes, of total parity $\sign(w')$.

\begin{prop} \label{L2explicit}  
If $\delta (S) \neq 1$, then $t_S^{(2)} (\rho )    = 0$.  If $\delta (S)=1$:
For each $w \in W_U$, put $J_w = \{ \nu \in \Hom(\mathfrak{a}_{0, \R}, \R): |\nu| \leq 
\sqrt{| \rho_U +\lambda|^2- |\mu_w|^2}\}$; equip $\Hom(\mathfrak{a}_{0, \R}, \R)$
with the additive-invariant measure in which the interval $|\nu| \leq 1$ has measure $2$. 
Then there exists $c(S) > 0$, depending only on $S$, so that:
\begin{eqnarray*}
t_S^{(2)} (\rho )     =  (-1)^{\frac{\dim S-1}{2}}  c(S)  
\sum_{w \in  W_{M_f}\backslash W_U}  \int_{J_w}  P_w(\nu) d\nu.
\end{eqnarray*}
 Moreover, when $\delta(S) = 1$, 
$$(-1)^{\frac{\dim S-1}{2}} t_S^{(2)}(\rho) > 0.$$
\end{prop}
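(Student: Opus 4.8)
The plan is to compute $t_S^{(2)}(\rho)$ via the Plancherel formula on $G$, following Olbrich, and then to isolate the contribution that survives. First I would express $\Tr e^{-t\Delta_k^{(2)}}(x,x)$ using the Plancherel decomposition of $L^2(G)$: by the argument in the proof of Lemma \ref{Lhk}, $e^{-t\Delta_k^{(2)}}e^{t\Lambda_\rho}$ is realized by convolution with the operator-valued function $\psi_t$ of \eqref{Bigop}, so its trace on the diagonal is $\int_{\widehat{G}} \Theta_\pi(\psi_t) \, d\mu_{\mathrm{Planch}}(\pi)$, where by \eqref{Kuga} each tempered $\pi$ contributes $e^{-t(\Lambda_\rho - \Lambda_\pi)} \dim \Hom_K(\wedge^k\mathfrak{p}^* \otimes E, \pi)$. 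Summing over $k$ with the weights $\frac12(-1)^k k$ produces, for each $\pi$, the combination $\sum_k \frac12(-1)^k k \dim\Hom_K(\wedge^k\mathfrak{p}^*\otimes E,\pi)$; this vanishes unless $\pi$ has nonzero $(\mathfrak{g},K)$-cohomology with coefficients in $E$ (after a twist), because the alternating degree-weighted Euler characteristic of a complex with zero Laplacian eigenvalue is forced by Hodge theory — and the nonvanishing of that combination is exactly where $\delta(S)$ enters. The representations with $\mathfrak{g}$-cohomology are, by Vogan--Zuckerman, the $A_\mathfrak{q}(\lambda)$; their tempered Plancherel density is supported on principal series induced from the fundamental parabolic $P_f$, which is one-dimensional in its split part precisely when $\delta(S)=1$.

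Second, granting $\delta(S)=1$, I would carry out the Mellin transform in $t$. The surviving term is an integral over $\nu \in i\mathfrak{a}_{0,\R}^*$ (a one-dimensional contour, since $\dim \mathfrak{a}_{0,\R} = \delta(S) = 1$) of $e^{-t(\Lambda_\rho - \Lambda_{\pi(\theta,\nu)})}$ against the Plancherel density, which is a polynomial in $\nu$ — this is where the polynomials $P_w(\nu)$ come from, via the Weyl dimension/Plancherel formula for $G$ expressed as a product over $\Delta^+(\mathfrak{t}_U,\mathfrak{u})$, with the sum over $w \in W_{M_f}\backslash W_U$ indexing the relevant discrete series of $M_f$ (equivalently the $A_\mathfrak{q}(\lambda)$'s, or the $\mu_w$'s). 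Here $\Lambda_\rho - \Lambda_{\pi(\theta,\nu)} = |\rho_U+\lambda|^2 - |\mu_w|^2 - |\nu|^2$ by the Casimir computation, so $\frac{1}{\Gamma(s)}\int_0^\infty t^{s-1} e^{-t(c_w - |\nu|^2)}\,dt = (c_w - |\nu|^2)^{-s}$ when $|\nu|^2 < c_w := |\rho_U+\lambda|^2 - |\mu_w|^2$, and this is holomorphic at $s=0$; differentiating at $s=0$ gives $\log(c_w - |\nu|^2)$. Integrating the polynomial $P_w(i\nu)$ against $\log(c_w-\nu^2)$ over the real line is now exactly the regularized integral of \S\ref{an-integral}, and \eqref{intp} converts $\int^\wedge P_w(i\nu)\log(\nu^2 + (-c_w))\,d\nu$ — after matching signs, $a^2 = c_w$ — into $\pi\int_{-\sqrt{c_w}}^{\sqrt{c_w}} P_w(\nu)\,d\nu = \pi \int_{J_w} P_w(\nu)\,d\nu$. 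Collecting the numerical constants ($\pi$, the Plancherel constant, the sign $(-1)^{(\dim S - 1)/2}$ coming from the degree-weighted cohomology sign) into $c(S) > 0$ gives the displayed formula.

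For the final positivity assertion, the point is that $(-1)^{(\dim S-1)/2} t_S^{(2)}(\rho) = c(S) \sum_{w} \int_{J_w} P_w(\nu)\,d\nu$, and I need this sum to be strictly positive. Each integrand $P_w(\nu)$ need not itself be nonnegative — $P_w$ has sign $\sign(w)$ — so I would not argue termwise. Instead I would recognize $\sum_w \int_{J_w} P_w(\nu)\,d\nu$ as (a positive multiple of) a value, or a small $\nu$-integral, of the Plancherel density for the full group $G$, which is manifestly nonnegative, and rule out its vanishing by a dimension/support argument: the $A_\mathfrak{q}(\lambda)$ contributing are genuinely present (Vogan--Zuckerman), and for $\delta(S)=1$ there is no internal cancellation because the contributions sit on disjoint pieces of $\widehat{G}$. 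Concretely, one can reorganize the alternating sum so that the integrand becomes $\prod_{\alpha\in\Delta^+(\mathfrak{t}_U,\mathfrak{u})} \langle \text{(something)} + \nu, \alpha\rangle$ summed over a Weyl subgroup, which is a known skew-symmetric-to-symmetric manipulation yielding a square-like nonnegative expression; this is essentially Olbrich's computation, done there for the trivial local system. I expect \textbf{this positivity step — showing the sum over $w$ does not vanish and producing an honest closed-form positive value — to be the main obstacle}, and the paper's remark that "we do not see a simple way to check this; we compute each case" confirms that a uniform conceptual argument is not available and one instead verifies positivity by going through the list of rank-one-deficiency groups ($\SL_2(\C)$, $\SL_3(\R)$, $\SO_{n,m}$ with $nm$ odd, and the exceptional cases) individually in \S\ref{explicit}.
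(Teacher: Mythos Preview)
Your overall architecture matches the paper's: Plancherel expansion producing the integer $d(\pi)$, reduction to the fundamental series, Casimir computation, the regularized integral of \S\ref{an-integral}, and case-by-case positivity. Two steps, however, are not executed correctly.

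\emph{The vanishing step.} Your claim that $\sum_k (-1)^k k \dim\Hom_K(\wedge^k\mathfrak{p}^*\otimes E,\pi)$ vanishes ``unless $\pi$ has nonzero $(\mathfrak{g},K)$-cohomology'' is not justified by the reasoning you give. For an acyclic complex one has $\sum_k(-1)^k\dim C^k=0$, but the degree-weighted sum $\sum_k(-1)^k k\dim C^k$ need not vanish; Hodge theory does not force it. The paper instead rewrites $d(\pi(\theta,\nu))$ via Frobenius reciprocity as $\dim[\theta\otimes\det'[1-\mathfrak{p}]\otimes\rho_\lambda]^{K\cap M_Q}$ and observes that the virtual $(K\cap M_Q)$-representation $\det'[1-\mathfrak{p}]$ is zero whenever every element of $K\cap M_Q$ has a fixed subspace of dimension $\geq 2$ in $\mathfrak{p}$; this forces $\dim\mathfrak{a}_Q=1$ and $Q$ associate to $P_f$. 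Your invocation of Vogan--Zuckerman and $A_{\mathfrak{q}}(\lambda)$ is a red herring here.

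\emph{The Mellin step.} With $\nu$ the real Plancherel parameter, the exponent is $-t(\nu^2+c_w)$, not $-t(c_w-|\nu|^2)$; your sign would make the $\nu$-integral diverge. More importantly, you cannot perform the $t$-integral first, differentiate at $s=0$ to get $-\log(\nu^2+c_w)$, and then integrate in $\nu$: that last integral is divergent, and this is precisely why the regularization of \S\ref{an-integral} is needed. The paper keeps the double integral together and applies directly the identity $\frac{d}{ds}\big|_{s=0}\frac{1}{\Gamma(s)}\int_0^\infty\!\!\int_{-\infty}^{+\infty} t^{s-1}e^{-t(y^2+c^2)}p(iy)\,dy\,dt=-2\pi\int_0^c p(y)\,dy$, which is \eqref{intp} after the Mellin transform. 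You land at the correct endpoint, but the intermediate line ``differentiating at $s=0$ gives $\log(c_w-|\nu|^2)$'' is not a valid step.

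Your treatment of the positivity is accurate: there is no uniform argument, and the paper reduces (via invariance under inner twisting) to $\mathfrak{sl}_3(\R)$ and $\mathfrak{so}(p,1)$ and checks each by hand.
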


Note that $t_S^{(2)} (\rho)$ depends on the choice of a normalization for the (symmetric) Riemannian metric on $S$. If we had scaled the metric on $S$ by a constant $C$, then $t_S^{(2)} (\rho)$ would be scaled by
$C^{-\dim (S)}$, as is the volume form on $S$. 

The proof  takes up the rest of this section; we work out some explicit examples in \S \ref{sl3} and prove that $t_S^{(2)} (\rho )$ is nonzero if $\delta (S)=1$ in \S \ref{Cpositive}.  Let us explain the outline.  We are not going to give details of issues related to convergence. 
It follows from \S \ref{erhodiff} and  \eqref{L2analyticTorsion} 
that 
\begin{equation}\label{tv22} 
t_S^{(2)} (\rho )   =  \frac{1}{2}  \frac{d}{ds} \Big|_{s=0} \frac{1}{\Gamma(s)} \int_{0}^{\infty}  dt \int_{\pi \in \hat{G}} d(\pi) t^{s-1}  e^{-t(\Lambda_{\rho} - \Lambda_{\pi})} d\muP (\pi)
\end{equation}
where  $d(\pi) \in \mathbb{Z} $ is defined as
\begin{equation}\label{dpidef} d(\pi) := \sum_{k} (-1)^k k \dim \pi \otimes \wedge^k \mathfrak{p}^* \otimes \rho_{\lambda}|_K,  \end{equation} $d\muP$ is the {\em Plancherel measure} on the unitary dual $\hat{G}$, and $\Lambda_{\rho}$, $\Lambda_{\pi}$ denote the Casimir eigenvalues of $\rho$ and $\pi$.  It is often convenient to think of $d(\pi)$ as the 
dimension of the {\em virtual vector space}
$ \sum^* k  [\pi \otimes \wedge^k \mathfrak{p}^* \otimes \rho_{\lambda}|_K]^K$ (alternating sum),

The proof of Proposition \ref{L2explicit} follows from
the following results:
\begin{enumerate}
\item $d(\pi(\theta,\nu)) = 0$ unless $Q$ is associate to $P_f$.  We prove this in \S \ref{Vanishing}. 
This proves, by standard facts about Plancherel measure, that the integration in \eqref{tv22}
may be restricted to the case when $Q = P_f$, $\dim A_f =1$ and $\theta$ is a discrete-series
representation of $M= M_f$.  
\item In \S \ref{fundoscopic} -- \S \ref{explicit3}, we evaluate $d(\pi(\theta,\nu))$ explicitly when $Q=P_f$ is a fundamental parabolic subgroup and $\theta$ a discrete series; namely, 
\S \ref{fundoscopic} sets up general notations, \S \ref{explicit2} recalls character formulae,
and \S \ref{explicit3} carries out the computation of $d(\pi)$. 
\item The explicit Plancherel density when $Q = P_f$ and $\theta$ discrete is presented
in \eqref{planch}; this, together with  \S \eqref{an-integral}, completes the computation 
of the right-hand side of \eqref{tv22}. 
\end{enumerate}

\subsection{}\label{Vanishing}
Let $G$ be a group and 
 $V$ is a $G$-vector space. We denote by $\det[1-V]$ the virtual $G$-representation
 (that is to say, element of $K_0$ of the category of $G$-representations)
 defined by the alternating sum $\sum_i (-1)^i [\wedge^i V]$ of exterior powers. 
 This is multiplicative in an evident sense:
\begin{equation} \label{add1} 
\det[1 - V \oplus W] = \det[1-V] \otimes \det[1-W].
\end{equation}

We put $\detp[1-V] = \sum_{i} (-1)^i i \wedge^i V$. Then
\begin{equation} \label{add} 
\detp[1 - V \oplus W] = \detp[1-V] \otimes \det[1-W] \oplus \det[1-V] \otimes \detp[1-W].
\end{equation} 

Let us note that if $H$ is a group acting on $V$ such that every $g \in H$ has a fixed space of dimension $\geq 2$,
then $\detp [1-V]$ is trivial as a virtual $H$-representation; this follows since, 
by \eqref{add}, the trace of every $g \in H$ on $\detp[1-V]$ is then trivial. 

Note that $\detp [1- \mathfrak{p}^*] = \detp [1- \mathfrak{p}]$ as virtual $K$-representations so that, by Frobenius reciprocity and \eqref{dpidef}, 
\begin{equation} \label{desid}   d(\pi(\theta, \nu)) = 
\dim [\theta \otimes \detp [1- \mathfrak{p}] \otimes \rho_{\lambda}]^{(K \cap M_Q)}. \end{equation}
Let us suppose \eqref{desid} is nonzero. 
Each $g \in K \cap M_Q$ fixes, in its conjugation action, $\mathfrak{a}_Q$;
on the other hand, it also belongs to some conjugate of a maximal torus in $K$ and,
as such, fixes some $K$-conjugate of $\mathfrak{a}$.  There must exist some $g \in K \cap M_Q$
for which these two spaces coincide and are one-dimensional; otherwise $\detp[1-\mathfrak{p}^*]$ is virtually trivial.
Therefore, $\mathfrak{a}$ and $\mathfrak{a}_Q$ are $K$-conjugate and one-dimensional. 
It follows that  $Q$ is fundamental (i.e., it is associate to $P_f$) and $\dim A_f = 1$. 

In particular, $t_S^{(2)}(\rho) =0 $ unless $\delta(S) = 1$. 

\subsection{The case of the fundamental series.} \label{fundoscopic}

In the remainder of the section, $Q = P_f$.

It is a theorem of Harish-Chandra \cite[Corollary 14.60]{Knapp} that, in these circumstances, $\pi(\theta, \nu)$ is irreducible for all $\nu$, although we do not need this fact.  Since compact 
Cartan subgroups of $M_f$ are connected, we may moreover realize $\theta$ as an induction 
from the connected component $M_f^0$ of a discrete-series representation $\theta_0$ of 
$M_f^0$ (see \cite[\S 6.9 and
\S 8.7.1]{Wallach}). Again by Frobenius reciprocity we obtain
\begin{equation} \label{desid2}   
d(\pi(\theta, \nu)) =  \dim [\theta_0 \otimes \detp [1- \mathfrak{p}] \otimes \rho_{\lambda}]^{K_f^0}. 
\end{equation}

 We denote by $\mu \in \bt^*$ the the infinitesimal character, of the representation $\theta_0^*$ --
 the dual representation to $\theta$. (We index $\theta$ by the infinitesmal character of its {\em dual} to make the computation shorter at a later stage.)    Therefore, $\theta_0^*$ contains with multiplicity one the $K_f^0$- type
with highest weight $\mu + \rho_{M_f} - 2 \rho_{K_f}$ (``Blattner's formula'', \cite[Theorem 9.20]{Knapp}). 
There exists a positive constant
  $c_S'$ depending only on our choice of  a Haar measure on $G$ such that the Plancherel measure along the space of $\pi(\theta, \nu)$ is equal to:
\begin{equation} \label{planch} 
c_S' (-1)^{\frac12 \dim N} \prod_{\alpha \in \Delta^+} \frac{\langle \mu +  \nu, \alpha \rangle}{\langle \rho_U , \alpha \rangle} \cdot d\nu 
\end{equation}
(see \cite[Theorem 13.11]{Knapp}). This is a nonnegative even polynomial on $i \mathfrak{a}_{0, \R}$.

\subsection{Weyl denominators} \label{explicit2}
We continue with the notation of \S \ref{fundoscopic}.  Let $T_f^{\sim}, T_U^{\sim}$ denote, respectively, the universal covers of $T_f, T_U$.

Our choice of positive systems defines {\em Weyl denominators} $D_{M_f}: T_f^{\sim}\rightarrow \mathbb{C}, D_U: T_U^{\sim} \rightarrow \mathbb{C}$;
these are,  by definition, the determinant
of $1 - \mathrm{Ad}(t^{-1})$ acting on the sum $\mathfrak{m}_f^+$ resp. 
$\mathfrak{u}^+$ of all positive root spaces
on $\mathfrak{m}_f$ resp. $\mathfrak{u}$, multiplied by $\rho_{M_f}$ or $\rho_U$. 
In a similar way we obtain a  Weyl denominator $D_{K_f}$
of $T_f^{\sim}$, where we sum over all positive roots of $\mathfrak{k}_f$ only. Note that formally Weyl denominators are given by products $\prod_{\alpha >0} (e^{\alpha/2} - e^{-\alpha/2})$ 
over the corresponding positive roots.

Now $\mathfrak{u} / \mathfrak{t}_U = \mathfrak{u}^+ \oplus
\mathfrak{u}^-$ and \eqref{add1} implies that the character of  $\det [1-\mathfrak{u}/\mathfrak{t}_U]$
is equal to $\prod_{\alpha >0} (1-e^{\alpha}) \cdot \prod_{\alpha >0} (1-e^{-\alpha})$.
It follows that:
\begin{equation} \label{denomdet} D_U^2 = (-1)^{\dim \mathfrak{u}^+} \mbox{ character of } \det [1-\mathfrak{u}/\mathfrak{t}_U],
\end{equation}  
with a similar identity for $D_{M_f}^2$ and $D_{K_f}^2$.

We claim that, as characters of $T_f^{\sim}$,  
\begin{equation} \label{Weyl-denom} 
|D_{K_f}|^2 \cdot   (\mbox{character of }\detp[1-\mathfrak{p}]) = - D_U \overline{D_{M_f}}
\end{equation}
(In particular, the right hand side descends to $T_f$). 

Equivalently,  since the $T_f$-character of $\detp [1-\mathfrak{p}] $ is the negative of the character
of $\det [1 -\p/\at] $ and $D_{K_f}^2$ (resp. $D_{M_f}^2$) is real-valued and has the same sign
as $(-1)^{\dim \k_f^+}$ (resp. $(-1)^{\dim \m_f^+}$),  we need to establish
\begin{equation}\label{pe2} \det [1 -\p/\at ] = (-1)^{\dim \m_f^+ + \dim \k_f^+} \frac{D_U}{D_{M_f}} \cdot \frac{D_{M_f}^2}{D_{K_f}^2}.\end{equation}

The {\em signs} of both sides of this putative equality are the same. In fact, 
we claim that $(-1)^{\dim \m_f^+ + \dim \k_f^+}D_{M_f}^2/D_{K_f}^2$, $D_{U}/D_{M_f}$, and the character
of $\det[1-\p/\at]$ all take positive values on $T_f^{\sim}$:
\begin{itemize}

 \item[-]  
$D_{M_f}^2$ is real-valued and has the same sign
as $(-1)^{\dim \m_f^+}$; similarly for $D_{K_f}^2$. It follows that the quantity $(-1)^{\dim \m_f^+ + \dim \k_f^+} D_{M_f}^2/D_{K_f}^2$ is positive.

\item[-] 
The representation of the compact torus $T_f$ on $\mathfrak{u}^+/\mathfrak{m}_f^+$ 
has no fixed vectors (the centralizer of $\mathfrak{b}$ is $\mathfrak{b} \oplus \mathfrak{a}_0$)
and is the complexification of the real representation on  $\mathfrak{n}_{f\R}$.

This means that  the eigenvalues of $T_f$ on $\mathfrak{u}^+/\mathfrak{m}_f^+$ come in complex-conjugate pairs; it also shows that 
 $\rho_U/\rho_{M_f} > 0$, being  a real valued character on the connected group $T_f^{\sim}$. 
 
 Therefore $D_U/D_{M_f}  > 0$. 
 
\item[-] The character of $\det[1-\p/\at]$ is positive because,
again, the eigenvalues of $T_f$ on $\p/\at$ come in complex conjugate pairs,
this being the complexification of   
 $\mathfrak{p}_\R/\mathfrak{a}_0 \cap \mathfrak{p}_{\R}$. 

\end{itemize}
 
  Note this
 also implies that 
 \begin{equation} \label{congruence} 
\dim \mathfrak{m}_f^+ \equiv \dim \mathfrak{u}^+ \mbox{ modulo } 2. 
\end{equation}

Since the signs match, it suffices to check \eqref{pe2} after squaring both sides. 
In view of \eqref{denomdet} and \eqref{congruence}, we are reduced to checking the equality of $T_f^{\sim}$-characters:
$\det [1 -\p/\at ]^2 = \det [1- \mathfrak{u}/\mathfrak{t}_U] \det [1 - \mathfrak{m}_f / \mathfrak{b}] / \det [1 -
\mathfrak{k}_f/ \mathfrak{b} ]^2$ which in turn would follow from
\begin{equation} \label{eqRep}
2 [\p / \at ] = [\mathfrak{u} / \mathfrak{t}_U] + [\mathfrak{m}_f / \mathfrak{b}] - 2 [\mathfrak{k}_f/ \mathfrak{b} ],
\end{equation}
this identity being understood in the Grothendieck group of $T_f$-representations. 

Now $[\mathfrak{u} ] =
[\mathfrak{k}] + [\mathfrak{p}]$ as $T_f$-representations. Similarly 
$[\mathfrak{m}_f] = [\mathfrak{k} \cap \mathfrak{m}_f] + [\mathfrak{p} \cap \mathfrak{m}_f]$. So we can
write the right hand side of \eqref{eqRep} as:
\begin{eqnarray*}
 [\mathfrak{u} / \mathfrak{t}_U] + [\mathfrak{m}_f / \mathfrak{b}] - 2 [\mathfrak{k}_f / \mathfrak{b} ] & = &  [\mathfrak{u}] + [\mathfrak{m}_f] - 2 [\mathfrak{k}_f] + [ \mathfrak{b} ] - 
[\mathfrak{t}_U] \\
& = & [\mathfrak{k}] + [\mathfrak{p}] + [\mathfrak{k}_f] + [\mathfrak{p} \cap \mathfrak{m}_f]
- 2 [\mathfrak{k}_f] + [ \mathfrak{b} ] - 
[\mathfrak{t}_U] \\
& = & [\mathfrak{k}/ \mathfrak{k}_f] + ([\mathfrak{p}] + [\mathfrak{b}] - [\mathfrak{t}_U])
+ [\mathfrak{p} \cap \mathfrak{m}_f] \\
& = &  [\mathfrak{k}/ \mathfrak{k}_f] + [\mathfrak{p}/\at]+
[\mathfrak{p} \cap \mathfrak{m}_f] .
\end{eqnarray*}

Write $\mathfrak{p}_M = \mathfrak{p} \cap (\mathfrak{m}_f \oplus \at)$. 
If we show that $\mathfrak{p}/\mathfrak{p}_M$ and $\mathfrak{k}/ \mathfrak{k}_f$ are isomorphic as $T_f$-representations, we will be done. 

Choose nonzero
 $X_0 \in \at$ and consider the map $$\phi: Y \in \k/\k_f \mapsto [Y,X_0] \in \mathfrak{p}.$$ 
 This map is injective, because $\mathfrak{k}_f$ exhausts the centralizer of $X_0$ in $\mathfrak{k}$; moreover, for any $Z \in \mathfrak{p}_M$, 
 $$\langle [Y, X_0] , Z \rangle = - \langle Y, [X_0, Z] \rangle = 0,$$
 so that the image of $\phi$ in fact lands in $\mathfrak{p}_M^{\perp}$.  
The map $\phi$ thus induces a $T_f$-equivariant injective map from $\mathfrak{k}/ \mathfrak{k}_f$ to 
$\mathfrak{p}/\mathfrak{p}_M$. The difference between the dimensions of these spaces is the difference between the split ranks of $G$ and $MA$; we verify by direct inspection\footnote{For 
$\SL_3(\R)$ both ranks are $2$, for $\SO(p,q)$ both ranks are $\mathrm{min}(p,q)$.}
that this is zero in all cases. These spaces are thus equivariantly isomorphic, as desired. 

\subsection{Computation for the fundamental series.} \label{explicit3} 
We shall now compute \eqref{desid} in the case under consideration, when $Q=P_f$ is fundamental
and the contragredient representation $\theta_0^*$ as indexed previously by a dominant character $\mu \in \bt^*$.  

Let $[W_f \backslash W_U]$ be the set of $w \in W_U$ such that $\mu_w$ is dominant as a weight on $\mathfrak{b}$ (with respect to the roots of $\mathfrak{b}$ on $\mathfrak{k}_f$), i.e.:
$$[W_f \backslash W_U] = \{ w \in W_U: \langle \mu_w, \beta \rangle \geq 0 \mbox { for all } \beta \in \Delta^{+}(\mathfrak{b}, \mathfrak{k}_f) \}.$$
This is therefore a set of coset
representatives for $W_f$ in $W_U$: for every such $\beta$, 
  \begin{equation} \label{dom} \langle (\rho_U + \lambda) , \beta \rangle \geq \langle \rho_U, \beta \rangle > 0.\end{equation}
This shows that any $\mu_w$ lies strictly in the interior of a Weyl chamber for
the roots of $\mathfrak{b}$ on $\mathfrak{k}_f$.

Let us agree to denote by $\sum^*$ any summation over a Weyl group
that is weighted by the sign character. 

If we equip $T_f$ with Haar measure of total mass $1$, the Weyl integration formula
formally gives:
\begin{eqnarray*}
d(\pi (\theta , \nu)) & = & \int_{T_f} \mbox{character of } (\theta_0 \otimes \detp[1-\mathfrak{p}] \otimes
\rho_{\lambda} ) \frac{|D_{K_f}| ^2}{|W_{f}|} dt .
\end{eqnarray*}
{\it A priori} this is only formal; its validity is a nontrivial fact, and only follows from the truth of Blattner's conjecture \cite{Schmid}.

We carry out the computation using character formulae for discrete series. 
 
The character of $\rho_{\lambda}$ is given on the
maximal torus $T_U$ by  
$$\frac{ \sum^*_{w \in W_U} w (\rho_U + \lambda)   }{ D_{U}}.$$
Separately, the numerator and denominator only make sense on $T_U^{\sim}$, but the ratio makes sense on $T_U$.
On the other hand, the character of $\theta_0^*$
is a distribution whose restriction to $T_f$ is given by \cite[Theorem 12.7]{Knapp}:
$$(-1)^{\frac{1}{2} \dim(M_f/K_f )} \frac{\sum^*_{w \in W_f}   \ w\mu } {D_{M_f}}.$$ 
Here again the ratio makes sense on $T_f$. It thus follows from \eqref{Weyl-denom}
that:
$$
\begin{array}{l}
d(\pi (\theta , \nu)) \\
\ \ \ = \frac{(-1)^{\frac{1}{2} \dim(M_f/K_f)}}{|W_{f}|} \int_{T_f} \overline{\left( 
\frac{\sum^*_{w \in W_f}   \ w\mu } {D_{M_f}} \right)} \left( \frac{ \sum^*_{w \in W_U} w (\rho_U + \lambda )   }{ D_{U}} \right) \detp[1-\mathfrak{p}] |D_{K_f}| ^2 dt \\
\ \ \ = (-1)^{\frac{1}{2} \dim(M_f/K_f)  +1} \int_{T_f} \overline{\left( 
\sum \- {}^*_{w \in W_f}    \ w\mu  \right)} \left( \sum\- {}^*_{w \in [W_f \backslash W_U]}  \mu_w   \right) dt.
\end{array}
$$ 
 The equation \eqref{dom} implies that $w \mu = \mu_{w'} \ \ (w \in W_f, w' \in [W_f \backslash W_U])$ only if $w$ is trivial; 
we conclude that
$$d(\pi (\theta , \nu)) = \left\{
\begin{array}{ll}
(-1)^{\frac{1}{2} \dim(M_f/K_f)  +1}  {\rm sgn} (w)  & \mbox{ if } \mu = \mu_w \mbox{ for some }  w \in [W_f \backslash W_U] \\
0 & \mbox{ if not}.
\end{array} \right. $$

\subsection{End of the proof of Proposition \ref{L2explicit}.} \label{5.7}
The infinitesimal character of $\rho_{\lambda}$ is $\rho_U + \lambda$. It thus follows \cite[Proposition 8.22 and Lemma 12.28]{Knapp} that   
$$\Lambda_{\rho} - \Lambda_{\pi (\theta , \nu)}  = |\nu|^2 - |\mu |^2 + | \rho_U + \lambda |^2.$$

\medskip
\noindent
{\em Remark.} Note that from the definitions $| \rho_U + \lambda |^2 - |\mu_w|^2 $ is always nonnegative. 
It moreover follows from Parthasarathy's Dirac inequality \cite{Parthsarathy,BorelWallach,VoganZuckerman} that  $\rho_{\lambda}$ is 
strongly acyclic if and only if $| \rho_U + \lambda |^2 - |\mu_w|^2 $ is
positive for all $w$. 

\medskip

Recall that we assume $\delta (G)=1$. 
From this, \eqref{planch}, \eqref{tv22}, and the result of \S \ref{explicit3} we
get:
$$
\begin{array}{ll}
t_S^{(2)} (\rho)  = &  \frac{(-1)^{\frac12 \dim N_f +\frac{1}{2} \dim(M_f/K_f) +1 }}{2} c_S'  \\
& \ \ \ \times  \sum_{w \in [W_f \backslash W_U]} 
 {\rm sgn} (w)  \frac{d}{ds} \Big|_{s=0} \Big( \frac{1}{\Gamma(s)} \int_{0}^{\infty}  dt \\
& \ \ \   \int_{-\infty}^{+\infty} t^{s-1} e^{-t (\nu^2 + | \rho_U + \lambda |^2 - |\mu_w|^2) } \prod_{\alpha \in \Delta^+(\mathfrak{t}_U, \mathfrak{u})} \frac{\langle \mu_w +  \nu, \alpha \rangle}{\langle \rho_U , \alpha \rangle} \cdot d\nu \Big) .
\end{array}
$$
It moreover follows from \S \ref{an-integral} that if $p$ is an even polynomial and $c\geq 0$,
\begin{eqnarray*}
\frac{d}{ds} \Big|_{s=0} \frac{1}{\Gamma(s)} \int_{0}^{\infty}  dt  \int_{-\infty}^{+\infty} t^{s-1} e^{-t (y^2+ c^2)} p(iy) d y = - 2\pi \int_0^c p(y) dy.
\end{eqnarray*}
To conclude note that $\dim N_f + \dim (M_f / K_f) = \dim S - 1$.  Thus we obtain the statement of Proposition \ref{L2explicit}, where the constant $c(S)$ is given by $ \frac{\pi}{2} \frac{|W_{M_f}|}{|W_{K_f}|} c_S'$, and $c_S'$
is the constant appearing in the Plancherel measure \eqref{planch}.
\qed

\subsection{Non-vanishing in the general case} \label{Cpositive}
Assume $\delta (S) =1$; we shall show that $t_S^{(2)}(\rho)$ is always nonzero
and of the sign $(-1)^{\frac{\dim S-1}{2}}$. We follow an idea of Olbrich to minimize computation. 

\proof 
 
If $\delta(S) = 1$, the (real) Lie algebra of $G$ splits as $\mathfrak{g}_0 \oplus \mathfrak{g}_1$, where
$\delta(\mathfrak{g}_0) = 0$ and $\mathfrak{g}_1 $ is isomorphic to either $\mathfrak{sl}_3$
or $\mathfrak{so}_{p,q}$. 
Using Proposition \ref{L2explicit}
we reduce to checking in the case where $\mathfrak{g}_0$ is trivial, i.e. 
the (real) Lie algebra of $G$ equals either $\mathfrak{sl}_3$ or $\mathfrak{so}_{p,q}$.

Moreover, the expression $\sum_{w} \int_{\nu}$ of Proposition \ref{L2explicit} depends only on the triple
$(\mathfrak{a}_0 \subset \mathfrak{t}_U \subset \mathfrak{u})$.  The isomorphism class of this triple, in the case of $\mathfrak{so}_{p,q}$,
depends only on $p+q$ (more generally, it depends only on the inner form of the Lie algebra). 

We are reduced to checking the cases when the real Lie algebra of $G$ 
is isomorphic to either $\mathfrak{sl}_3$ or $\mathfrak{so}_{p,1}$. 
These cases are handled in \S \ref{SL3} and \S \ref{Ghyp} respectively. 
\qed

\medskip
\noindent
{\em Proof of Theorem \ref{main}.}
The existence of strongly acyclic bundles is proven in \S \ref{SAexist}. 

It follows from (\ref{TX=tor}), Theorem \ref{approxthm}, and duality that
$$\sum \- {}^* \log |H_j (\Gamma_N , M)|
\rightarrow {\rm vol} (\Gamma \backslash S) t_S^{(2)} (\rho )$$ 
as $N$ tends to infinity. 
The constant $c_{G, M} = t_S^{(2)} (\rho )$ has the same sign as $(-1)^{\frac{\dim S-1}{2}}$ as we have just proven, and only depends
on $G$, $M$. 
\qed

\subsection{Examples.}  \label{sl3}

\subsubsection{$\mathbf{G} = \SO_{2n+1,1}$} \label{Ghyp}

In this case $U \cong \SO_{2n+2} (\R) $, $K \cong \mathrm{O}_{2n+1}(\R)$, $M_f=K_f \cong \SO_{2n}(\R)$ and
$S= {\Bbb H}^{2n+1}$. %

In the notation of \cite{Bourbaki}, we may choose a Killing-orthogonal basis $\varepsilon_i$ for $\mathfrak{t}_U^*$
such that:
\begin{enumerate}
\item 
The positive roots are those roots $\varepsilon_k \pm \varepsilon_l$ with $1 \leq k < l \leq n+1$;
\item $\mathfrak{a}$ is the common kernel of $\varepsilon_2, \dots, \varepsilon_n$;
\item $\mathfrak{b}$ is the kernel of $\varepsilon_1$, and the
positive roots for $\mathfrak{b}$ on $M_f \cap K$ are $\varepsilon_j \pm \varepsilon_k \ (1<  j < k)$.
\item The positive roots nonvanishing on $\mathfrak{a}$ are $\varepsilon_1 \pm 
\varepsilon_l$ ($1 < l \leq n+1$), and thus $\alpha_0 = \varepsilon_1$ gives
the unique positive restricted root $\mathfrak{a}_{0, \R} \rightarrow \mathbb{R}$. 
 \end{enumerate}

The representations of $U$ are parametrized by a highest weight 
$\lambda = (\lambda_1 , \ldots , \lambda_{n+1}) = \lambda_1 \varepsilon_1 + \ldots + \lambda_{n+1} \varepsilon_{n+1}$ such that $\lambda$ is dominant (i.e.
$\lambda_1 \geq \cdots \geq \lambda_n \geq |\lambda_{n+1}|$) and integral (i.e. every $\lambda_i \in \Z$).  The action of the Cartan involution on representations is via:
$$(\lambda_1, \dots, \lambda_{n+1}) \mapsto (\lambda_1, \dots , \lambda_n, -\lambda_{n+1}).$$
Note in particular (see Lemma \ref{SA}) that $\rho= \rho_{\lambda}$ is strongly acyclic if and only if $ \lambda_{n+1} \neq 0$.

The Weyl group of $U$ consists of
permutations and sign changes of $\{ \varepsilon_1 , \ldots , 
\varepsilon_{n+1} \}$ with even sign (i.e., positive determinant). 
  In particular 
$|W_U| = 2^{n} (n+1) !$.  The subgroup $W_f = W_{M_f} \subset W_U$ comprises all permutations
that fix the first coordinate. 

We have $\rho_U + \lambda = (n + \lambda_1 , n-1 + \lambda_2 , \ldots , \lambda_{n+1}).$
We may assume that $\lambda_{n+1} \geq 0$ and for convenience we will rewrite  
$$ \rho_U + \lambda = (a_{n} , \ldots , a_0).$$
Note that $(a_j)$ is a strictly increasing sequence of nonnegative integers.

The set $W_{M_f} \backslash W_U$ has size $2(n+1)$;  
one may choose a set of representatives for it given by
$w_k \ (0 \leq k \leq n)$, any element sending $\varepsilon_k$ to $\varepsilon_1$,
as well as $w_k^*$, any element sending $\varepsilon_k$ to $-\varepsilon_1$.  
For explicitness we choose $w_k$ so that
\begin{eqnarray} \label{weighta}
 w_k (\rho_U + \lambda) = 
(  a_{k} , a_n , \ldots , \widehat{a_k} , \ldots , a_0);
\end{eqnarray}
it is of sign $(-1)^{n-k}$. (As usual, $\widehat{a_k}$ denotes that the $a_k$ term is {\em omitted.})    
Choose $w_k^*$ similarly to $w_k$ but replacing $a_k, a_0$ by $-a_k, -a_0$ respectively. 
Thus
$\mu_{w_k} + t \alpha_0 = (t, a_n, \ldots, \widehat{a_k}, \ldots,  a_0)$.

To apply Proposition \ref{L2explicit}, first note
$|\rho_U + \lambda|^2 - |\mu_{w_k}|^2 = a_k^2 |\varepsilon|^2$, where $|\varepsilon|^2$ is the common value of any $|\varepsilon_j|^2$. 
This again shows that $\rho_{\lambda}$ is strongly acyclic when $a_0 = \lambda_{n+1} \neq 0$. 
Set $$E=E(\rho )  =
  \prod_{0 \leq i < j \leq n} (a_j^2 - a_i^2) , \ \ F  = \prod_{0 \leq i < j \leq n} (j^2 -i^2) .$$
When $\rho$ is trivial, $ E = F$. 
 Now, notation as in Proposition \ref{L2explicit}, 
\begin{eqnarray} \label{comp2}
P_{w_k}(t \alpha_0) =  P_{w_k^*}(t \alpha_0)   & = & \frac{E}{F} \prod_{j \neq k} \frac{t^2 - a_j^2}{a_k^2 - a_j^2}.
\end{eqnarray}
Note that the left-hand side is independent of the normalization of the inner product $\langle \cdot, \cdot \rangle$. 
The polynomial $\Pi_k (t) = \prod_{j \neq k} \frac{t^2 - a_j^2}{a_k^2 - a_j^2}$ is 
of degree $2n$ and $\Pi_k ( \pm a_j) = \delta_{jk}$, the Kronecker $\delta$ symbol. Set
$Q_k = \Pi_k + \ldots + \Pi_n$; it is the unique even polynomial of degree $\leq 2n$ which satisfies
$$Q_k (\pm a_j) = \left\{
\begin{array}{ll}
0 & \mbox{ if } j <k \\
1 & \mbox{ if } j \geq k.
\end{array} \right.$$

  Proposition \ref{L2explicit} 
then implies that: 
\begin{eqnarray} \label{Thyp}
\nonumber t_{{\Bbb H}^{2n+1}}^{(2)} (\rho ) & = & (-1)^n c(\mathbb{H}^{2n+1})  \frac{4 |\varepsilon| E}{F}
\sum_{k=0}^{n} \int_0^{a_k} \Pi_k (t) dt. \\ 
& = & (-1)^{n} c(\mathbb{H}^{2n+1}) \frac{4 |\varepsilon| E}{F} \sum_{k=0}^n \int_{a_{k-1}}^{a_k} Q_k (\nu) d\nu.
\end{eqnarray}
Here we set $a_{-1} = 0$.

Each integral in \eqref{Thyp} is positive. In fact $Q_k '$ has a root in each interval $[\pm a_{j-1} ,  \pm a_j]$ for $1\leq j \neq k$, as well as a root in $[-a_0 ; a_0]$. Being of degree $\leq 2n-1$ this forces $Q_k$
to be either constant (this is the case iff $k=0$) or strictly increasing between $a_{k-1}$ and $a_k$. It follows that $\int_{a_{k-1}}^{a_k} Q_k (\nu) d\nu >0$.

\subsubsection{$\mathbf{G}=\SL_3$.} \label{SL3}

In this case $U \cong \SU_3$, $K \cong \SO_3$, $M_f \cong \{g \in \GL_2(\R): \det g = \pm 1\}$,   and $K_f \cong \mathrm{O}_2(\R)$. 

Fix an element $g \in \mathrm{SU}_3$ conjugating $\mathfrak{t}_U$ into diagonal matrices; 
let $\varepsilon_j \ \ (1 \leq j \leq 3)$ be the pull-back, by $g$, of the coordinate functionals. 
Thus $\sum \varepsilon_j = 0$; moreover,   we may choose $g$ such that:
\begin{enumerate}
\item The positive roots are $\varepsilon_i - \varepsilon_j$, with $i < j$; 
\item $\rho_U = \varepsilon_1 - \varepsilon_3$; 
\item $\mathfrak{b}$ is identified with the kernel of $\varepsilon_1 + \varepsilon_2 - 2 \varepsilon_3$; 
\item $\alpha_0 =  \frac{1}{2} (\varepsilon_1 + \varepsilon_2 - 2 \varepsilon_3)$ 
is the unique restricted root $\mathfrak{a}_{0,\R} \rightarrow \mathbb{R}$; 
\item The Weyl group $S_3$ acts by permuting the $\varepsilon_j$.
\end{enumerate}

Now let $p \geq q \geq r \in \Z$ and set $\lambda = p \varepsilon_1 + q\varepsilon_2 + r \varepsilon_3$.    Put
$$A_1 =\frac12 (p+1-q), \ \ \ A_2 = \frac12 (p-r+2), \ \ \ \mbox{ and } \ \ \ A_3= \frac12 (q-r+1).$$
$$C_1 = \frac13 (p+q-2r+3), \ \ \ C_2 = \frac13 (p+r-2q), \ \ \ \mbox{ and} \ \ \ C_3 = \frac13 (2p -q-r+3).$$
  Proposition \ref{L2explicit} gives, after a routine computation, 
$$t_S^{(2)}(\rho) =  2 | \alpha_0| c(S) \sum_{k=1}^3 (-1)^{k+1} \int_{0}^{|C_k|}  A_k (\left( \frac{9t^2}{4} \right) - A_k^2)dt.$$ 
Now $A_j \int_0^{|C_j|} (\frac{9 t^2}{4} - A_j^2 ) dy  = \frac{A_j |C_j |}{4} \left( 3 C_j^2 - 4 A_j^2 \right)$.    The inner $k$-sum equals
$$ 8 A_1 A_3 C_1 C_3  + 8 A_2 |C_2|  \begin{cases}  A_3 C_3,  \ \ \ C_2 \geq 0 \\
A_1 C_1,   \ \ \  C_2 \leq 0,
\end{cases}$$
which is manifestly positive, since $A_1, A_2, A_3, C_1, C_3$ are all positive.

\subsubsection{Numerical computations} \label{table} 
Here we compute the explicit value of the constant $c(S)$ appearing in Proposition \ref{L2explicit}. The
computations are not original, they were first done by Olbrich \cite{Olbrich} following Harish-Chandra.
We just translate these in our setting. 

First
recall from \S \ref{5.7} that $c(S) = \frac{\pi}{2} \frac{|W_{M_f}|}{|W_{K_f}|} c_S '$, where $c_S'$ is the constant appearing in the Plancherel measure \eqref{planch}. This constant is explicitely computed
by Harish-Chandra \cite[\S 24 Thm. 1 and \S 27 Thm. 3]{HC2} but we have to take into account that
Harish-Chandra's and our normalizations of the measure $dg$ and $d\nu$ differ by some multiplicative
factor. 

Indeed the Plancherel measure \eqref{planch} depends on the normalization of the Haar measure $dg$
and of $d\nu$. Our normalization (cf. \S 3.3) of $dg$ differ from Harish-Chandra's by the factor $2^{\frac12 (\dim (S) - \dim \mathfrak{a}_{\R})}$ -- see \cite[\S 37 Lemma 2]{HC1}. Similarly our $d\nu$ is 
the Lebesgue measure which corresponds to the metric induced by the Killing form on ${\rm Hom} (\mathfrak{a}_{0,\R} , 
\R )$. It differs from Harish-Chandra's by the factor $(2\pi)^{\delta(G)}$.  

We thus get 
\begin{eqnarray} \label{cS'}
c_S ' = \frac{1}{|W_A| (2\pi)^{\frac12 (\dim G/K + \delta (G))}} \frac{\prod_{\alpha \in \Delta^+} \langle \rho_U , \alpha \rangle}{\prod_{\alpha \in \Delta_{\mathfrak{k}}^+} \langle \rho_K , \alpha \rangle},
\end{eqnarray}
where $W_A = \{ k \in K \; : \; \Ad (k) \mathfrak{a} \subset \mathfrak{a} \} / K_f$, $\Delta_{\mathfrak{k}}^+ = \Delta^+ (\mathfrak{b} , \mathfrak{k})$ and $\rho_K \in \mathfrak{b}^*$
is the half-sum of positive roots in $\Delta_{\mathfrak{k}}^+$.

But \cite[\S 37 Lemma 4]{HC1} implies that 
$$\frac{\prod_{\alpha \in \Delta^+} \langle \rho_U , \alpha \rangle}{\prod_{\alpha \in \Delta_{\mathfrak{k}}^+} \langle \rho_K , \alpha \rangle} = (2\pi)^{\frac12 (\dim (U/T_U) - \dim (K / B))} \frac{{\rm vol} (K) {\rm vol} (T_U)}{{\rm vol} (U) {\rm vol} (B)},$$
where the volumes are the Riemannian ones corresponding to the Killing form. 

We may now suppose that $\delta (G)=1$ and let $\alpha_0$ the unique positive restricted root 
$\mathfrak{a}_{0,\R} \rightarrow \R$. Then ${\rm vol} (T_U) / {\rm vol} (B) = 2\pi / |\alpha_0|$ and we get:
\begin{eqnarray} \label{cS'2}
\frac{\prod_{\alpha \in \Delta^+} \langle \rho_U , \alpha \rangle}{\prod_{\alpha \in \Delta_{\mathfrak{k}}^+} \langle \rho_K , \alpha \rangle} = (2\pi)^{\frac12 (\dim (G/K) - 1)} \frac{2\pi}{|\alpha_0| {\rm vol} (S^c)}.
\end{eqnarray}

We finally  verify by direct inspection that $\frac{|W_{M_f}|}{|W_{K_f}| |W_A|}=2$ except for $G= \SO_{2n+1 , 1}^0$ where $\frac{|W_{M_f}|}{|W_{K_f}| |W_A|}=\frac12$. It thus follows from \eqref{cS'} and \eqref{cS'2} that:
\begin{eqnarray} \label{cS}
c(S) = \left\{ 
\begin{array}{ll}
\frac{\pi}{|\alpha_0| {\rm vol} (S^c)} & \mbox{ if } G \neq  \SO_{2n+1 , 1}^0 , \\
\frac{\pi}{4 |\alpha_0| {\rm vol} (S^c)} & \mbox{ if } G =  \SO_{2n+1 , 1}^0.
\end{array} \right.
\end{eqnarray}

\medskip
\noindent
{\it Examples.} 1) Since the volume of the standard $(2n+1)$-sphere is $2\pi^{n+1} /n!$ we get
$c(\H^{2n+1}) =  \frac{n!}{8 |\alpha_0| \pi^n}$ so that:
$$t_{{\Bbb H}^{2n+1}}^{(2)} (\rho )  =  (-1)^n \frac{n!}{2\pi^n} \frac{|\varepsilon| E}{F}
\sum_{k=0}^{n} \int_0^{a_k} \Pi_k (t) dt.$$
(Notations are as in \S \ref{Ghyp}.)

When $\rho$ is the trivial representation we recover that $t_{{\Bbb H}^{3}}^{(2)} = -\frac{1}{6\pi}$, 
$t_{{\Bbb H}^{5}}^{(2)} = \frac{31}{45\pi^2}$, ... see \cite{Luck} for more values. 

2) There is a natural Haar measure $dg$ on $\SL_3 (\R)$ which gives covolume $\zeta (2)
\zeta (3)$ to $\SL_3 (\Z)$. As in \S 3.3 the measure $dg$ is the product of a Riemannian volume form $dx$ on 
$S=\SL_3 (\R) / \SO_3 (\R)$ and the Haar measure $dk$ on $K$ normalized so that 
${\rm vol} (K) = {\rm vol} ({\Bbb S}^1) {\rm vol} ({\Bbb S}^2)= 8\pi^2$. The volume form $dx$ 
is associated to the invariant metric induced from the trace
form of the standard representation of $\mathfrak{sl}_3 (\R)$. Then a routine computation -- using e.g. 
Macdonald's formula \cite{Macdonald} -- shows that ${\rm vol} (S^c) = \pi^3 / \sqrt{2}$ and we deduce
from \S 5.9.2 that
$$t_S^{(2)} ({\rm triv}) = \frac{\sqrt{2}}{\pi^2}$$
so that 
$$t_S^{(2)} ({\rm triv}) \times {\rm vol} ( \SL_3 (\Z) \backslash S) = \frac{\sqrt{2}}{48\pi^2} \zeta (3) \approx 0,003.$$

3) One of our most important example is  $G= \SL_2 (\C)$. It is locally isomorphic to $\SO_{3,1}$ but
we have $U \cong \SU_2 \times \SU_2 $ and representations of $U$ are more easily parameterized by two non-negative integers $(p,q)$;  namely,
$$g_{\theta,\phi} =  \left( \begin{array}{cc} e^{i \theta} & 0 \\0 & e^{-i \theta} \end{array} \right)  
\times \left( \begin{array}{cc} e^{i \phi} & 0 \\0 & e^
{-i \phi} \end{array} \right) \mapsto
e^{i p \theta + i q \phi}.$$
The action of the Cartan involution on representations corresponds to the 
involution $\alpha \leftrightarrow \beta$ on highest weights.
The representation $\rho_{\lambda}$ is strongly acyclic if and only if $p \neq q$. The corresponding
finite dimensional representation of $\SO_{3,1}^0$ is parametrized by $(a_1 , a_0) = (p+q+1 , p-q)$ 
and we find:
\begin{eqnarray*}
t_{{\Bbb H}^{3}}^{(2)} (\rho ) =  \frac{-1}{6\pi} \left\{ (p+q+1)^3 - |p-q|^3 +3 (p+q+1) |p-q| (p+q+1- |p-q|) \right\} . 
\end{eqnarray*}

\medskip

One interesting possibility is that one could {\em numerically compute} $L^2$-torsion
sufficiently rapidly to be of interest as an algorithm for computing homological torsion. 
We do not know how efficient one could expect this to be, in practice.

\section{Ash-Serre conjectures and Conjecture \ref{conjmain}.} \label{Ash}

We shall
discuss the Ash-Serre conjectures, relating torsion classes for arithmetic groups to Galois representations; after this, we explain why these conjectures, together with various
known heuristics in number theory, are ``compatible'' with Conjecture \ref{conjmain}.  In particular,
we shall see how the term $\rank(G) - \rank(K)$ in Conjecture \ref{conjmain}
arises naturally from arithmetic computations.

This section, by contrast with the previous, is very algebraic and should also be regarded as very speculative in character.  We do prove, however, two results that are possibly of independent interest,
Proposition \ref{odd} (classification of odd involutions in the $L$-group) and Proposition \ref{MF} (counting the number of local nearly ordinary representations.)

\begin{prop}\label{odd}Let $\mathbf{G}$ be a semisimple simply connected algebraic group over the real numbers.  
Let $\LG$ be the Langlands dual group. Every involution in $\LG$ 
lifting the nontrivial element of $\mathrm{Gal}(\C/\R)$ has trace $\geq \rank(G) - 2 \ \rank(K)$ in the adjoint representation;
the set of elements of $\LG$ with adjoint trace equal to $\rank(G) - 2 \ \rank(K)$ forms a single $G^{\vee}(\C)$-orbit. 
 \end{prop}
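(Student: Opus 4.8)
The plan is to recast the statement as a claim about fixed--point subgroups of an involution of $G^{\vee}(\C)$, reduce it to an extremal problem on the root datum, and solve that problem --- uniformly if possible, otherwise by inspecting the (short) list of $\R$--simple groups.

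First I would record the elementary observation that if $j\in\LG$ is an involution lifting the nontrivial element of $\Gal(\C/\R)$, then $\Ad(j)$ is an involutive algebraic automorphism of $\hat{G}:=G^{\vee}(\C)$, so $\hat{G}^{j}$ is reductive and $\mathrm{tr}\bigl(\Ad(j)\mid\mathfrak{g}^{\vee}\bigr)=2\dim\hat{G}^{j}-\dim\hat{G}$. Since $\dim\hat{G}$ and $\rank\hat{G}=\rank(G)$ are fixed, and $\dim\hat{B}=\tfrac12(\dim\hat{G}+\rank\hat{G})$ for a Borel $\hat{B}\subset\hat{G}$, the Proposition is equivalent to the statement that $\dim\hat{G}^{j}\ \geq\ \dim\hat{B}-\rank(K)$ for every such $j$, with equality on exactly one $\hat{G}$--conjugacy class.

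Next I would reduce to combinatorics. Any involution of a connected reductive group in characteristic zero is quasi--semisimple, so by Steinberg's theorem $j$ can be conjugated to stabilise a pair $(\hat{B},\hat{T})$; choosing the pinned representative $\sigma_{0}$ of $\sigma$ relative to $(\hat{B},\hat{T})$, both $j$ and $\sigma_{0}$ induce the same diagram automorphism $\vartheta$ of the based root datum, whence $j=t\,\sigma_{0}$ for some $t\in\hat{T}$ with $t\cdot{}^{\sigma_{0}}t=1$ --- a $1$--cocycle, so $\hat{G}$--conjugacy classes of $j$ are classes in $H^{1}(\Gal(\C/\R),\hat{G})$, each represented from $\hat{T}$. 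Decomposing $\mathfrak{g}^{\vee}=\mathfrak{t}^{\vee}\oplus\bigoplus_{\alpha}\mathfrak{g}^{\vee}_{\alpha}$, the $\vartheta$--orbits of size two among the roots contribute $0$ to $\mathrm{tr}\,\Ad(j)$, while each $\vartheta$--fixed root contributes $\pm1$, the sign being a fixed character of $\hat{T}$ evaluated at $t$; hence
\[
\mathrm{tr}\bigl(\Ad(j)\mid\mathfrak{g}^{\vee}\bigr)=\mathrm{tr}\bigl(\vartheta\mid\mathfrak{t}^{\vee}\bigr)+\sum_{\vartheta\alpha=\alpha}\varepsilon_{\alpha}(t),\qquad \varepsilon_{\alpha}(t)\in\{\pm1\}.
\]
The first term is fixed; and since a $\vartheta$--fixed root is trivial on the $(-1)$--eigentorus of $\vartheta$ in $\hat{T}$, the remaining sum depends only on the image $\bar{t}$ of $t$ in the finite $2$--group $(\hat{T}^{\vartheta})[2]$. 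So everything comes down to minimising that sign--sum over $\bar{t}\in(\hat{T}^{\vartheta})[2]$ --- equivalently, to finding the smallest fixed--point subgroup of an inner involution of the reductive group $(\hat{G}^{\sigma_{0}})^{\circ}$, whose root system is $\Psi:=\{\alpha:\vartheta\alpha=\alpha\}$.

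This last step is where the work lies, and I expect it to be the main obstacle. One must (i) determine the minimum of the sign--sum over $\bar{t}$ as a combinatorial invariant of $\Psi$ --- essentially $\#\Psi$ minus twice the maximal size of $\Psi\cap\ker\chi$ over those $\chi\in\Hom(\Z\Psi,\{\pm1\})$ arising from $(\hat{T}^{\vartheta})[2]$, the ``arising'' constraint being exactly what makes the naive bound too weak; (ii) verify, using the explicit structure of $\Psi$ and the description of $\mathbf{K}$ via the Cartan involution $\theta_{\mathbf{G}}$, that substituting the minimum into the displayed identity yields precisely $\rank(G)-2\rank(K)$; and (iii) deduce the single--orbit claim from transitivity of the Weyl group $W(\Psi)$ on the minimisers. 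I would first try (i)--(ii) uniformly --- the minimiser should be the involution attached (via Vogan / Adams--Barbasch--Vogan duality) to the quasi--split real form of $\hat G$ in the inner class determined by $\sigma$, whose maximal compact has dimension $\dim\hat{B}-\rank(K)$, a ``duality'' identity one can read off the Satake diagram, and one can exploit that $-1$ lies in the Weyl group of every $\Psi$ occurring for a \emph{nontrivial} pinned $\vartheta$ (types $B_n,C_n,F_4$). Since $\mathbf{G}$ is semisimple and simply connected, the whole problem is additive over the $\R$--simple factors, so at worst (i)--(ii) reduce to inspecting the finitely many inner classes of $\R$--simple groups --- including the residual cases ($\vartheta$ trivial with $G^{\vee}$ of type $A_{n}\ (n\geq2)$, $D_{2n+1}$ or $E_{6}$, where $-1\notin W$) --- in the spirit of the explicit computations in \S\ref{sl3}. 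The crux, and the most error--prone bookkeeping, will be correctly matching $\rank(K)$, $\dim\hat{B}$ and $\#\Psi$ so that the combinatorial minimum is forced to equal $\rank(G)-2\rank(K)$.
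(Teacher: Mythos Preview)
Your approach is correct and reaches the same endpoint as the paper, but by a genuinely different route. You set up the problem combinatorially via Steinberg's theorem, writing $j = t\sigma_0$ with $\sigma_0$ pinned and reducing the trace to an explicit sign-sum over $\vartheta$-fixed roots, to be minimized over a finite $2$-group; you then anticipate that the minimizer should correspond to a quasi-split form on the dual side, and propose to verify the resulting numerical identity case-by-case.

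The paper bypasses the combinatorial optimization entirely. It invokes the bijection between conjugacy classes of involutions of $\mathfrak{g}^{\vee}$ and real forms of $\mathfrak{g}^{\vee}$, together with the standard fact that among all inner forms the quasi-split one is numerically characterized by its Cartan involution having minimal trace (an immediate consequence of the $NAK$ decomposition: $\dim NA$ is bounded by the dimension of a Borel, with equality exactly in the quasi-split case). This gives \emph{both} the uniqueness of the minimal-trace conjugacy class \emph{and} identifies the minimum as $\mathrm{trace}(\Theta)$, where $\Theta$ is the Cartan involution of the quasi-split real form $\mathfrak{g}_0^{\vee}$ in the inner class determined by $\Gal(\C/\R)\to\mathrm{Out}(\mathfrak{g}^{\vee})$. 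All that remains is to check $\mathrm{trace}(\Theta)=\rank(G)-2\,\rank(K)$, which the paper does by tabulating the involution $\mathfrak{g}\leftrightarrow\mathfrak{g}_0^{\vee}$ on isomorphism classes of quasi-split simple real Lie algebras and comparing two integers per entry.

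What your framework buys is an explicit formula for the trace in terms of root data, which in principle could lead to a uniform proof if the sign-sum can be controlled directly; what the paper's approach buys is a cleaner conceptual reduction (real forms $\leftrightarrow$ involutions, quasi-split $\leftrightarrow$ minimal trace) that makes the residual case-check shorter --- one compares two numbers per simple type rather than solving an optimization over $(\hat{T}^{\vartheta})[2]$ and then matching it to $\rank(K)$.
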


This proposition has also been independently proven by M. Emerton and will appear in a published version of  \cite{CE}. 

This motivates the definition of {\em odd} Galois representation that will play a critical role in our later discussion (\S \ref{odddefn}): a Galois representation with target $\LG$ is {\em odd} if 
the image of complex
conjugation lies in the canonical class of involutions specified by the Proposition. 
 
\proof  It suffices to verify the assertion of the Proposition in the case when the Lie algebra $\mathfrak{g}$ 
 of $\mathbf{G}$ is $\R$-simple and quasisplit (the latter because the $L$-group 
 is unchanged by inner twisting). 

We shall use the fact that involutions correspond to real forms of semisimple Lie algebras together with a case-by-case check.  
More precisely, if $\mathfrak{h}$ is any complex semisimple Lie algebra, there
is a bijection:
\begin{equation} \label{basic-id} \mbox{involutions of $\mathfrak{h}$ modulo inner automorphism} \leftrightarrow \mbox{real forms of $\mathfrak{h}$.} \end{equation}
The map from the right-hand to the left-hand side is given by the (conjugacy class of) Cartan involutions associated to a real form. 
There
are natural equivalence relations on both sides: on the left, we 
have the fibers of the projection
to $\mathrm{Out}(\mathfrak{h})$; on the right, 
we have the equivalence relation of ``inner twisting.'' These relations are identified under \eqref{basic-id}.

Let us recall, moreover, that any real semisimple Lie algebra $\mathfrak{h}_0$ possesses 
a unique {\em quasi-split inner form} $\mathfrak{h}_0^{qs}$. It is numerically distinguished
amongst inner forms of $\mathfrak{h}_0$ by the fact that the Cartan involution has minimal trace.\footnote{This follows from the $NAK$ decomposition; since the Lie algebra of $NA$
is solvable, its complexification has complex dimension at most the dimension of a complex Borel, with equality if and only if the group is quasi-split.}

Together with \eqref{basic-id}, this implies that there is a unique conjugacy class 
of involutions in $\LG$ of minimal trace on $\mathfrak{g}$, 
but does not identify this trace. Let $\mathfrak{g}^{\vee}_0$
 be the quasi-split real form of $\mathfrak{g}^{\vee}$ 
 whose associated involution (under the bijection of \eqref{basic-id})  projects, in $\mathrm{Out}(\mathfrak{g}^{\vee})$, to the image of the complex conjugation under the map
 $\mathrm{Gal}(\C/\R) \rightarrow   \mathrm{Out}(\mathfrak{g} ) = \mathrm{Out}(\mathfrak{g}^{\vee})$. 

 Our assertion now amounts to the following: if $\Theta$ is the involution of $\mathfrak{g}^{\vee}$ obtained by complexifying the Cartan involution of $\mathfrak{g}_0^{\vee}$, then \begin{equation} \label{newid}\mathrm{trace}(\Theta) = \rank(G) - 2 \ \rank(K). \end{equation}

 We verify this case-by-case. 
 The map $\mathfrak{g} \rightarrow \mathfrak{g}^{\vee}_0$ is an involution 
 on isomorphism classes of simple real quasisplit Lie algebras; it fixes:
 \begin{enumerate}
\item The split forms of  $\mathfrak{g}_2, \mathfrak{f}_4, \mathfrak{e}_7, \mathfrak{e}_8$ and $\mathfrak{so}(2n, 2n)$. Both sides
 of \eqref{newid} equal $-\rank(G)$; 
\item $\mathfrak{so}(2n-1, 2n+1)$; both sides of \eqref{newid} equal $2-2n$. 
\item Every $\mathfrak{g}$ that is not absolutely simple, i.e., any complex simple Lie algebra, considered as a real Lie algebra; in those cases, both sides of \eqref{newid} are $0$. 
\end{enumerate} 
 As for the remaining cases, the involution switches 
 pairs of rows in the table:
  \begin{equation}
 \begin{array}{|c|c|c|}
 \hline
 \mathfrak{g} & r_G- 2 r_K  & \mathrm{trace}(\Theta) \\ 
 \hline
 \mathfrak{su}(p,p) & -2p & 0 \\
 \mathfrak{sl}_{2p} & 0 & - 2p \\
 \hline
\mathfrak{su}(p,p+1) & -2p-1 & 1 \\
\mathfrak{sl}_{2p+1} & 1 & -2p-1 \\
\hline
\mathfrak{so}(p,p+1) & -p & -p \\ 
\mathfrak{sp}(2p) & -p & -p \\
\hline
\mathfrak{so}(2p+1,2p+1) & 1-2p & -1-2p \\
\mathfrak{so}(2p,2p+2) & -1-2p & 1-2p \\
\hline
\mathfrak{e}_6^{\mathrm{split}} & -2 &  -6 \\
\mathfrak{e}_6^{\mathrm{qs}} & -6 & -2 \\ 
\hline
 \end{array}
 \end{equation}
 In the last line, $\mathfrak{e}_6^{\mathrm{qs}}$ denotes the unique {\em quasi-split, non-split}
 form of $\mathfrak{e}_6$. 
 \qed

\subsubsection{Examples.}
\begin{enumerate}
\item For $\mathbf{G}=\SL_n$, an odd involution on $\mathfrak{g}^{\vee} = \mathfrak{sl}_n$
is conjugation by:
$$\pm  \left(
\begin{array}{ccccc}
1& & & & \\
& -1 & & & \\
& & 1 & & \\
& & & -1 & \\
& & & & \ddots
\end{array} \right).$$
\item For $\mathbf{G} = \mathrm{SU}_n$ , an odd involution on $\mathfrak{g}^{\vee} = \mathfrak{sl}_n$
is represented by $X \mapsto -X^t$ (negative transpose). 
\item If $\mathbf{G}$ admits discrete series (i.e., $\mathrm{rank}(G)  = \mathrm{rank}(K)$)
then the {\em odd} involution is the unique conjugacy class with trace $- \mathrm{rank}(G)$.
This case is discussed in detail by B. Gross \cite{Gross}. 

\item If $\mathbf{G}_1$ is a complex reductive group, and $\mathbf{G}$ the Weil restriction of scalars to $\mathbf{R}$, then an odd involution $\mathfrak{g}^{\vee} = \mathfrak{g}_1^{\vee} \oplus \mathfrak{g}_1^{\vee}$
is given by $(X,Y) \mapsto (Y,X)$. 
\end{enumerate}

\subsection{Modularity conjecture.}  \label{localparameters}
We formulate a slight generalization of \cite{Ash} (split semisimple group, as opposed to $\mathrm{SL}_n$); we don't address the difficult question
of pinning down {\em all} the weights in which a given Galois representation occurs (see e.g  
\cite{Emerton, Herzig, bdj} and an unpublished manuscript by Clozel and Belabas where they
make precise computations for $\SL_2$ over an imaginary quadratic field).

\subsubsection{}
Let $\mathbf{G}$ be a semisimple  simply connected {\em split}\footnote{The assumption of {\em split} is not necessary; we have
imposed it to simplify the notation at several points.   The main goal of this section is to understand the arithmetic significance
of the factor $\rank(G) - \rank(K)$, and this is already interesting in the split case. } algebraic group over $\Z$. Let $G^{\vee}$ be the dual group. We regard it as a split group over $\Z$.  

For later use, we fix a maximal torus $\mathbf{T} \subset \mathbf{G}$,
a Borel $\mathbf{B} \subset \mathbf{G}$, and
thus a system of positive roots for $\mathbf{T}$.  Because $G$ is simply connected, the half-sum of positive roots gives a character
$\rho: \mathbf{T} \rightarrow \mathbb{G}_m
$.  

We also fix a torus $T^{\vee} \subset G^{\vee}$ dual to $\mathbf{T}$, and
a Borel $ B^{\vee} =T^{\vee} N^{\vee} \subset G^{\vee}$ containing $T^{\vee}$.

\subsubsection{}

Define the level $Q$ subgroup $\Gamma(Q) =  \mathrm{ker} \left( \mathbf{G}(\Z)  \rightarrow \mathbf{G}(\Z/Q\Z)\right).$ Let $\Gamma_0(Q)$ be the preimage of $\mathbf{B}(\Z/Q\Z)$
under the same map.

\subsubsection{}

  We define a {\em weight} to be an algebraic character $\bx$  
  of $\mathbf{T}$ such that $\bx - \rho$ is dominant. 
  We say that it is {\em strongly regular} if $\langle \bx, \alpha^{\vee} \rangle \geq 2$
  for every simple coroot $\alpha^{\vee}$.   In what follows, we always suppose that $\bx$ is strongly regular.

\subsubsection{} 

We shall extract from $\bx$ an arithmetic $\Gamma$-module $M_{\bx}$:

Being dominant, $\mathbf{x} - \rho$ is the highest weight of an irreducible representation 
$\alpha: \G_{\Q} \rightarrow \GL(W),$
where $W$ is a finite dimensional $\Q$-vector space. 
We choose a $\Gamma(1)$-stable $\Z$-lattice
$M_{\bx} \subset W$.  Such a lattice is not, in general, unique.

\subsubsection{} \label{odddefn}
 
Fix an algebraic closure $\Qbar$ of $\Q$ and a place of $\Qbar$
above every place $v$ of $\Q$. 
This gives an algebraic closure $\overline{\Q_v}$ of $\Q_v$ and an embedding
$D_v := \Gal(\Qvbar/\Q_v) \hookrightarrow \Gal(\Qbar/\Q)$.  We henceforth 
regard $D_v$ as a subgroup of $\Gal(\Qbar/\Q)$. Let $I_v \subset D_v$
be the inertia group for $v$ finite. For $v$ infinite, let $c_{\infty}$
be the nontrivial element of $D_v$.

\subsubsection{}

Let $k$ be a finite field of characteristic $p$ and of cardinality $q$.

Let $\omega: I_p \subset D_p \rightarrow \mathbb{F}_p^{\times} \hookrightarrow  \mathbb{G}_m(k)$ be the cyclotomic character (i.e., the map $D_p \rightarrow \mathbb{F}_p^{\times}$ arises
from the action on $p$th roots of unity). 
Define the {\em character associated to $\bx$}:
$$\psix :   I_p   \rightarrow T^{\vee}(k)$$
as the composition of $\omega$ with the dual $\bx^{\vee} :\mathbb{G}_m \rightarrow T^{\vee}$.

\subsubsection{}
A representation $\sigma_v : D_v \rightarrow G^{\vee}(k)$ is said to be:

\begin{enumerate}
\item
($v=\infty$) {\em Odd}, if $\sigma(c_{\infty})$ 
has trace  $\mathrm{rank}(G) - 2 \  \mathrm{rank}(K)$  in the adjoint action on $\mathfrak{g}$.

\item  ($v=p$) {\em Nearly ordinary of type $\bx$,} if it is conjugate
to a representation with image in $B^{\vee}(k)$, which, 
upon restriction to inertia and
projection
to $B^{\vee}/N^{\vee}(k) \cong T^{\vee}(k)$, equals $\psi_{\bx}$. 

  We will sometimes also apply the description {\em nearly ordinary at $\bx$}
to a representation with codomain $B^{\vee}(k)$; this means simply that
the projection to $B^{\vee}/N^{\vee}$ coincides, on inertia, with $\psi_{\bx}$. 

\item ($v=\ell \neq p$)
  {\em Steinberg at $\ell$,} 
if its restriction to $I_{\ell}$ has as image the cyclic subgroup generated
by a principal unipotent element\footnote{For the purpose of this definition, 
we shall suppose that $\ell$ is a ``good prime'' for $G^{\vee}$.}of $G^{\vee}(k)$.   Since $I_{\ell}$
has a unique cyclic quotient of order $p$, and all principal unipotent elements in $G^{\vee}(k)$
are conjugate, there exists only one conjugacy class of such maps $I_{\ell}
 \rightarrow G^{\vee}(k)$. 
\end{enumerate}

Let $\sigma: {\rm Gal} (\bar{\Q}/\Q) \rightarrow G^{\vee}(k)$ be a homomorphism. 
We say it is odd (resp. nearly ordinary of type $\bx$, resp. Steinberg) if its restriction
to $D_v$ for $v = \infty$ (resp. $v=p, v=\ell$) is. 
 
\begin{conj} \label{ashserre}    Let $S$ be a finite subset of places
of $\Q$, not containing $p$.   Suppose $\rho: \Gal \rightarrow  G^{\vee}(k)$
is an odd representation, nearly ordinary of type  $\bx$, Steinberg at
every place in $S$, and unramified outside $S$.  Put $N = \prod_{\ell \in S} \ell$. 

Then there is a Hecke eigenclass in $H^*(\Gamma_0(N), M_{\bx})$ that {\em matches $\rho$.}
\end{conj}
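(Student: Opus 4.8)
This final statement is a \emph{conjecture}, and a genuine proof is far out of reach in the generality stated; what follows is therefore a description of the strategy I would pursue in the (few) cases where it can be made to work — e.g. $\mathbf{G} = \mathrm{SL}_2$, via the modularity machinery of Taylor--Wiles as adapted by Calegari--Geraghty to cohomology supported in several degrees. The plan has three moving parts: a Galois deformation ring, a Hecke algebra, and a patching argument relating them, with Proposition \ref{odd} supplying the crucial numerical bookkeeping.

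First I would set up the Galois side. To $\rho : \Gal(\bar{\Q}/\Q) \to G^{\vee}(k)$ I attach a deformation functor on complete local $W(k)$-algebras, classifying lifts that are unramified outside $S \cup \{p\}$, nearly ordinary of type $\bx$ at $p$ (conjugate into $B^{\vee}$ with prescribed torus part $\psix$ on inertia), Steinberg at each $\ell \in S$, and satisfying the oddness condition at $\infty$. After rigidifying (or passing to pseudo-deformations) this is represented by a ring $R$. The point of imposing oddness is exactly Proposition \ref{odd}: in the global Euler-characteristic estimate for the adjoint, the archimedean term is controlled by the $(+1)$-eigenspace of the odd involution on $\mathfrak{g}^{\vee}$, whose dimension — pinned down by Proposition \ref{odd} and the local count mentioned in the introduction as Proposition \ref{MF} — forces the defect $l_0 := \rank(G) - \rank(K)$ to appear as the expected codimension of $R$ inside a suitable power series ring.

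Next, the automorphic side. Let $\mathbb{T}$ be the reduced Hecke algebra acting on $H^*(\Gamma_0(N), M_{\bx})$, completed at the maximal ideal $\mathfrak{m}_{\rho}$ determined by the Hecke/Satake eigenvalues of $\rho$; the goal $H^*(\Gamma_0(N), M_{\bx})_{\mathfrak{m}_{\rho}} \neq 0$ is equivalent to $\mathbb{T}_{\mathfrak{m}_{\rho}} \neq 0$, i.e. to the existence of an eigenclass matching $\rho$. I would construct a surjection $R \twoheadrightarrow \mathbb{T}_{\mathfrak{m}_{\rho}}$ — this needs the \emph{existence} of Galois representations attached to Hecke eigenclasses in the cohomology, plus local--global compatibility at $p$ and at $\ell \in S$ — and then patch: for a system of Taylor--Wiles primes $Q_n$ one builds a patched module $M_{\infty}$ over $S_{\infty} = W(k)[[x_1,\dots,x_r]]$ of codimension exactly $l_0$, simultaneously a module over a patched deformation ring $R_{\infty}$ of dimension $\dim S_{\infty} - l_0$; a commutative-algebra argument then forces $R_{\infty}$ to act faithfully on $M_{\infty}$, so $\mathbb{T}_{\mathfrak{m}_{\rho}}$ is a quotient of $R$ of the expected dimension, in particular nonzero.

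The main obstacle — and the reason this stays a conjecture — is twofold. First, for a general split semisimple $\mathbf{G}$ there is at present no construction of Galois representations attached to torsion (or even characteristic-zero) Hecke eigenclasses, so the map $R \to \mathbb{T}$ does not exist outside very restricted settings; even for $\mathrm{GL}_n$ over CM fields the available results carry substantial hypotheses. Second, even granting that input, the Calegari--Geraghty patching requires vanishing of $H^*(\Gamma_0(N), M_{\bx})_{\mathfrak{m}_{\rho}}$ outside a window of length $l_0$, large-image hypotheses on $\bar{\rho}$ to produce the auxiliary primes, and control of the integral local deformation rings at $p$ and at the Steinberg places — each a serious open problem here. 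One should therefore read this conjecture, like Conjecture \ref{conjmain}, as the predicted \emph{shape} of the answer, with $l_0 = \rank(G) - \rank(K)$ — identified via Proposition \ref{odd} as the archimedean defect — as the organizing invariant.
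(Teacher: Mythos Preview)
You have correctly identified that this statement is a \emph{conjecture}, not a theorem; the paper offers no proof whatsoever and makes no claim to one. After stating the conjecture the authors only remark that the idea is due to Ash and collaborators, that the precise formulation for general split $\mathbf{G}$ is their own (with the Steinberg condition imposed so as to pin down the level $N$ exactly), and then move on to use it heuristically in \S\ref{bharg}. There is therefore nothing in the paper for your proposal to be compared against.

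That said, a brief comment on your outline: the Calegari--Geraghty patching framework you describe is indeed the current state-of-the-art approach to such modularity questions, and your identification of the defect $l_0 = \rank(G) - \rank(K)$ with the archimedean contribution governed by Proposition~\ref{odd} is exactly the role that proposition plays in the paper's broader narrative. Note, however, that this machinery largely \emph{postdates} the paper (Calegari--Geraghty appeared after this work), so it would be anachronistic to suggest the authors had this strategy in mind; their evidence for the conjecture is the numerical compatibility with torsion growth worked out in \S\ref{bharg} via Bhargava's heuristics, not any deformation-theoretic argument. Your list of obstructions --- existence of Galois representations for torsion classes, vanishing outside the expected range, control of local deformation rings --- is accurate and remains the reason the conjecture is open in this generality.
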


The meaning of ``matches'' is the usual compatibility\footnote{ To be precise:  
Let $s$ be a prime number distinct from $p$. 
Let $\mathscr{H}_s$ be the ``abstract'' algebra over $\Z$ generated by Hecke operators at $s$, i.e. the algebra of $\Z$-valued $\G(\Z_s)$-bi-invariant  compactly supported functions on $\G(\Q_s)$. 
Then the usual definition of the Satake isomorphism  (see \cite{GrossSatake} for discussion) actually defines an isomorphism $\mathscr{H}_s \otimes k \rightarrow \mathrm{Rep}(\check{G}) \otimes_{\Z} k$; it is important that $\G$ is simply connected for this remark, otherwise the half-sum of positive roots causes difficulty. } between Frobenius eigenvalues and Hecke eigenvalues. 
The idea of such a conjecture is due to A. Ash and others and was formulated precisely for $\mathbf{G} = \SL_n$ in \cite{Ash}. However, to our knowledge, it has not been formulated for a general group $\mathbf{G}$; {\em any error in this formulation is due solely to us.} We have formulated it only in the special case of Steinberg ramification at $S$
so that we could be precise about the level (in particular, to obtain a numerically falsifiable conjecture!) 

Suppose, for example, that $\mathbf{G} = \mathrm{SL}_n$; take $\mathbf{T}$ to be the
diagonal torus. 
For any integers $a_1 \geq a_2 \geq \dots$, 
 \begin{equation} \mathbf{x}_{a_1 a_2 \dots a_n}:    \left( \begin{array}{cccc} y_1 & 0 & \dots & 0 \\ 0 & y_2  & \dots &0  \\  0 & 0 & \ddots & 0 \\ 0 & 0 & \dots & y_n. \end{array}\right)  
 \mapsto \prod y_i^{a_i + n-i}\end{equation} 
 is a weight.  It is strongly regular if $a_1 > a_2 > \dots > a_n$. A representation $\rho: \Gal(\Qpbar/\Q_p) \rightarrow \PGL_n(k)$
is nearly ordinary of type $\mathbf{x}_{a_1, \dots, a_n}$ if its restriction to inertia is conjugate to
$$\left( \begin{array}{cccc} \omega^{a_1+n-1} & * & \dots & * \\ 0 & \omega^{a_2+n-2} & \dots &* \\  0 & 0 & \ddots & * \\ 0 & 0 & \dots & \omega^{a_n}.\end{array}\right),$$
where $\omega: I_p \rightarrow \mathbb{Z}_p^{\times}$ is the cyclotomic character. 

For $n=2$, the representation of $\mathbf{G} = \mathrm{SL}_2$ with highest weight $\bx-\rho$ is 
the symmetric $(a_1-a_2)$st power of the standard representation; 
thus, this corresponds to modular forms of weight 
$a_1-a_2+2$, and {\em strong regularity}
corresponds to weight $3$ or greater.

The following result is perhaps of independent interest; it will be of use in \S \ref{bharg}. 
\begin{prop} \label{MF}
Suppose $\bx$ strongly regular. For sufficiently large $p$,  the number of  nearly ordinary representations $\Gal(\overline{\Q_p}/\Q_p) \rightarrow G^{\vee}(k)$
of type $\bx$ equals $|G^{\vee}(k)|  \cdot  |N^{\vee}(k)|$. 
\end{prop}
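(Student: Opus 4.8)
The plan is to count the nearly ordinary representations $\sigma: \Gal(\overline{\Q_p}/\Q_p) \to G^\vee(k)$ of type $\bx$ directly, by first parametrizing the conjugates of a representation landing in $B^\vee(k)$ and then subtracting the overcounting. First I would fix a representation $\sigma_0$ with image in $B^\vee(k)$ whose composition with $B^\vee/N^\vee \cong T^\vee(k)$, restricted to inertia, equals $\psix$. The crucial observation is that, since $\bx$ is strongly regular, the character $\psix: I_p \to T^\vee(k)$ is ``sufficiently generic'' so that, for $p$ large, no simple root of $T^\vee$ is trivial on its image; this will be the algebraic input that forces rigidity of the Borel. A representation $\sigma: D_p \to B^\vee(k)$ that is nearly ordinary at $\bx$ is determined by a cocycle-type datum: the image of (a generator of tame inertia) together with the image of Frobenius, subject to the single braid relation $\mathrm{Frob} \cdot t \cdot \mathrm{Frob}^{-1} = t^p$ in the tame quotient. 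I would count the pairs $(\sigma(t), \sigma(\mathrm{Frob})) \in B^\vee(k)^2$ satisfying this relation with the prescribed projection to $T^\vee(k)$ on inertia.

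The key steps, in order: (1) Use the structure of the tame quotient of $D_p$ — topologically generated by $t$ (a generator of tame inertia) and $\mathrm{Frob}$ with $\mathrm{Frob}\, t\, \mathrm{Frob}^{-1} = t^q$ — to reduce the problem to counting pairs in $B^\vee(k)$. (2) Show that, for $p$ large, the image $\sigma(t)$ of tame inertia is forced to lie in a conjugate of $T^\vee(k)$: project to $T^\vee(k)$ to see $\sigma(t)$ is regular (here strong regularity of $\bx$ enters, via $\langle \bx, \alpha^\vee\rangle \geq 2$ ensuring the $T^\vee$-component is not killed by any root mod $p$ once $p$ is large), hence $\sigma(t)$ is semisimple and regular in $B^\vee(k)$, and its centralizer is a maximal torus; conjugating, arrange $\sigma(t) \in T^\vee(k)$ with the prescribed value $\psix(t)$. (3) Given $\sigma(t)$ fixed as this regular element of $T^\vee(k)$, count the possible $\sigma(\mathrm{Frob}) \in B^\vee(k)$: the relation $\mathrm{Frob}\cdot\sigma(t)\cdot\mathrm{Frob}^{-1} = \sigma(t)^q$ must hold, and since $\sigma(t)$ is regular and $\psix$ is compatible with the cyclotomic twist, the solution set for $\sigma(\mathrm{Frob})$ is a torsor under the centralizer, of size $|T^\vee(k)| \cdot |N^\vee(k)|$ (the $T^\vee(k)$ factor being the free choice of the diagonal part of Frobenius, the $N^\vee(k)$ factor the unipotent part). (4) Finally account for the conjugation: the set of all nearly ordinary $\sigma$ is the $G^\vee(k)$-orbit of this family, and the stabilizer of a fixed $\sigma$ (with regular semisimple inertia image) inside $G^\vee(k)$ is the centralizer of $\sigma(t)$, i.e. $T^\vee(k)$; dividing, $|G^\vee(k)| \cdot |T^\vee(k)| \cdot |N^\vee(k)| / |T^\vee(k)| = |G^\vee(k)| \cdot |N^\vee(k)|$, as claimed. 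One must also check that \emph{every} nearly ordinary $\sigma$ (a priori only conjugate \emph{into} $B^\vee$) is counted exactly once — this follows because any two Borels containing the regular torus $\langle \sigma(t)\rangle$ containing $\sigma(t)$ as a regular element differ by the Weyl group, but the prescribed eigenvalue ordering from $\psix$ (again strong regularity) pins down a single Borel for $p$ large.

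The main obstacle I expect is Step (2): rigorously establishing that $\sigma(t)$ must be regular semisimple and that the ambient Borel is uniquely determined, purely from strong regularity of $\bx$ and the hypothesis that $p$ is large. The subtlety is uniform control — one needs that for all $p$ beyond some bound depending only on $G^\vee$ and $\bx$, the reductions mod $p$ of the distinct characters $\alpha \circ \bx^\vee: \mathbb{G}_m \to \mathbb{G}_m$ (for $\alpha$ ranging over roots) remain distinct and nontrivial on $\mathbb{F}_p^\times$, which is where $\langle \bx, \alpha^\vee \rangle \geq 2$ and the cyclicity of $\mathbb{F}_p^\times$ combine. A secondary technical point is handling the possibility that $\sigma(\mathrm{Frob})$ has nontrivial image in the component group or interacts with the unipotent radical in an unexpected way; but since we have reduced to $\sigma(t) \in T^\vee(k)$ regular, the constraint $\mathrm{Frob}\, \sigma(t)\, \mathrm{Frob}^{-1} = \sigma(t)^q$ together with $\psix$ being defined via the cyclotomic character (so that $\psix(t)^q = \psix(\mathrm{Frob}\, t\, \mathrm{Frob}^{-1})$ is automatically consistent) makes the count of $\sigma(\mathrm{Frob})$ a clean torsor computation. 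The rest is bookkeeping with orders of finite groups of Lie type.
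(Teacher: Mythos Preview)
Your proposal has a genuine gap: by presenting $D_p$ via its \emph{tame} quotient (generators $t$, $\mathrm{Frob}$ with the single relation $\mathrm{Frob}\,t\,\mathrm{Frob}^{-1}=t^p$), you are implicitly counting only tamely ramified representations. But nearly ordinary representations need not be tame. The definition constrains only the projection $I_p \to B^\vee(k) \to T^\vee(k)$; the wild inertia subgroup $P \subset I_p$ is pro-$p$ and can map nontrivially into the $p$-group $N^\vee(k)$. Concretely, for $G^\vee = \PGL_2$ and $k=\mathbb{F}_p$: the paper's cohomological count gives $|N^\vee(k)|^2 \cdot |T^\vee(k)| = p^2(p-1)$ nearly ordinary representations $D_p \to B^\vee(k)$, whereas a direct tame count (pairs $(\sigma(t),\sigma(\mathrm{Frob}))$ with $\sigma(t)$ a regular semisimple lift of $\psix(t)$ and $\sigma(\mathrm{Frob})$ centralizing it) yields only $p(p-1)$. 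The missing factor of $p = |N^\vee(k)|$ is exactly the wild contribution.

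There is a second, separate error in your step~(3). With $\sigma(t)\in T^\vee(k)$ regular and of order dividing $p-1$, one has $\sigma(t)^p=\sigma(t)$, so the braid relation forces $\sigma(\mathrm{Frob})$ to lie in the centralizer of $\sigma(t)$ in $B^\vee(k)$, which is $T^\vee(k)$, not $T^\vee(k)\cdot N^\vee(k)$. Your claimed ``$N^\vee(k)$ factor from the unipotent part of Frobenius'' does not exist. By a curious coincidence this spurious factor of $|N^\vee(k)|$ exactly compensates for the wild ramification you omitted, so the final number comes out right; but the argument is not valid.

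The paper's proof avoids both issues by never passing to the tame quotient. After reducing to counting lifts of a fixed $\psi:D_p\to T^\vee(k)$ through the lower central series $N^\vee = U_0 \supset U_1 \supset \cdots$, it uses strong regularity to ensure that each graded piece $X_j = U_j(k)/U_{j+1}(k)$ carries a $D_p$-action by characters that are neither trivial nor cyclotomic, whence $H^0(D_p,X_j)=H^2(D_p,X_j)=0$; the local Euler characteristic formula then gives $|H^1(D_p,X_j)|=|X_j|$, so the number of lifts at each stage is $|X_j|^2$. The factor of $|N^\vee(k)|$ you were missing is precisely the contribution of $H^1$ of the full local Galois group, which is sensitive to wild ramification.
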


The condition {\em sufficiently large $p$} arises in one way as follows: 
We wish to guarantee, for certain $m$ depending on $\bx$, that the $m$th power of the cyclotomic character $I_p \rightarrow \mathbb{F}_p^{*}$ is nontrivial. This will be so if $p-1 > m$, but not in general if $p$ is small.

Let us formulate an alternate phrasing, which will be convenient in the proof: 
Let $G_1, G_2$ be finite groups, and let $\Hom(G_1, G_2)^{\sharp}$
be the set of homomorphisms up to $G_2$-conjugacy. 
Then $\Hom(G_1, G_2)^{\sharp}$ is endowed with a natural measure:
the $G_2$-conjugacy class of a homomorphism $\alpha$ has mass $\frac{1}{|Z(\alpha)|}$;
here $Z(\alpha)$ is the centralizer of $\alpha$ in $G_2$. 
Alternately speaking, the measure (or {\em mass}) of $S \subset \Hom(G_1, G_2)^{\sharp}$
is the number of preimages in $\Hom(G_1, G_2)$, divided by $|G_2|$. 
With these conventions, 
we may rephrase the Proposition : the mass of  nearly ordinary representations of type $\bx$
equals $|N^{\vee}(k)|$.

\proof

Write, as shorthand, $D_p = \Gal(\Qpbar/\Q_p)$.  Fix an algebraic closure $\bar{k}$ of $k$.

Let $\psi_1, \psi_2$ be any two nearly ordinary representations of type $\bx$
valued in $B^{\vee}(k)$.  We claim that if $\psi_i$ are $G^{\vee}(k)$-conjugate,
then they are already $B^{\vee}(k)$-conjugate.  Suppose in fact
that $\Ad(g) \psi_1 = \psi_2$, for some $g \in G^{\vee}(k)$. 
The intersection $g B^{\vee} g^{-1} \cap B^{\vee}$ contains a maximal torus ${T^{\vee}}'$. 
 Let $N'$ be the normalizer of ${T^{\vee}}'$. All Borels containing a torus
 are conjugate under the normalizer of that torus, so there exists $n' \in N'(\bar{k})$ such that 
$g B^{\vee} g^{-1} = n' B^{\vee} n ' \- {}^{-1}$; since $B^{\vee}$ is its own normalizer,
it follows that there exists $b \in B^{\vee}(\bar{k})$ such that $g = n' b$.  

The inclusion into $B^{\vee}$ induces an isomorphism of
 ${T^{\vee}}'$ with $B^{\vee}/N^{\vee}$. 
Let $w$ be the automorphism of the latter group induced by $n'$.    
The composition
$$ I_p \hookrightarrow D_p \stackrel{\psi_j}{\rightarrow} B^{\vee}(k) \rightarrow  \left( B^{\vee}/N^{\vee}\right)(k)$$
is independent of $j$ (it is specified in terms of $\bx$). It is also $w$-fixed. 
  If $w$ is a nontrivial element of the Weyl group, 
this contradicts strong regularity for $q$ large.

 Thus, $n' \in B^{\vee}(\bar{k})$, which
shows that $g \in B^{\vee}(k)$.

We are reduced to proving that the total mass of nearly ordinary representations of type $\bx$
from $D_p$ to $B^{\vee}(k)$ equals $|N^{\vee}(k)|$; equivalently, 
 {\em the number of nearly ordinary representations $D_p \rightarrow B^{\vee}(k)$}
 equals $|N^{\vee}(k)| \cdot |B^{\vee}(k)|$.

  Let $\psi: D_p \rightarrow T^{\vee}(k)$ be any character extending $\psi_{\bx}: I_p \rightarrow T^{\vee}(k)$.  All such $\psi$ are unramified twists of one another; the total number of such $\psi$, then, 
  equals $|T^{\vee}(k)|$. 
  
On the other hand,  nearly ordinary representations $D_p \rightarrow B^{\vee}(k)$
  are precisely lifts (with respect to the isomorphism $B^{\vee}(k)/N^{\vee}(k) \cong T^{\vee}(k)$)  of some such $\psi$.
  We are therefore reduced to showing that
 {\em total number of lifts of any such $\psi: D_p \rightarrow T^{\vee}(k)$
equals $\frac{|N^{\vee}(k)| \cdot |B^{\vee}(k)|}{|T^{\vee}(k)|} = |N^{\vee}(k)|^2$. }

Now consider the lower central series for $N^{\vee} = U_0 \supset U_1 \supset U_2 \supset \dots$. 
Thus $\hatB/U_0(k)  = \hatT(k)$. 
Suppose we have fixed a lift $\psi_j$ of $\psi$ to $\hatB / U_j(k)$. 
We have an exact sequence of algebraic groups $$U_j/U_{j+1} \rightarrow \hatB/U_{j+1} \rightarrow \hatB/U_j. $$
This sequence is also exact on $k$-points. 
The group $X_j  = U_j(k)/U_{j+1}(k)$ is abelian; $D_p$ acts on it (via conjugation composed with $\psi_j$)
and the resulting action decomposes into character of the form $\alpha \circ \psi_j$, 
for various roots $\alpha : \hatT \rightarrow k^{*}$.

 By assumption of strong regularity, no $\alpha \circ \psi_j$
is either trivial or cyclotomic, at least for sufficiently large $p$.  Therefore, 
  $H^0(D_p, X_j) = H^2(D_p, X_j) = 0$.
The vanishing of $H^2$ shows that $\psi_j$ lifts to $\hatB/U_{j+1}$;
the vanishing of $H^0$ proves that $U_j/U_{j+1}$ acts freely by conjugation
on the set of lifts. 
The number of such lifts {\em up to $U_{j}/U_{j+1}$-conjugacy} is $H^1(D_p, X_j) = |X_j|$, by the local Euler characteristic formula \cite{Neukirch}; the total number of lifts is therefore 
$|X_j|^2$. 

The desired conclusion follows by iteration. \qed

\subsection{Bhargava's heuristics.} \label{bharg}

This section is rather speculative in nature. Our goal is to verify that Conjecture \ref{ashserre}
is compatible with the main result of this paper, Theorem \ref{main}. In other terms: does there exist a sufficiently large
supply of Galois representations to account for the (proven) exponential growth in torsion,
when $\delta = 1$?

Current technology is wholly inadequate to answer this question. We shall use heuristics
for counting number fields proposed by M. Bhargava. These suggest -- as we shall explain -- that
{\em the fraction of squarefree levels $N$ for which there is an eigenclass in $H^*(\Gamma(N), M_{\bx} \otimes \mathbb{F}_p)$ is $\gg p^{-\delta(G)}$.} 
Suppose now that $M_{\bx}$ is strongly acyclic; since the product $\prod_{p} p^{p^{-\delta(G)}}$
diverges when $\delta(G)=1$, this and Conjecture \ref{ashserre} imply the existence of levels $N$ for which
the torsion group has unbounded size. This is indeed compatible with the conclusion
of Theorem \ref{main}.

It is also possible to obtain support for Conjecture \ref{conjmain} via similar reasoning, but we do not discuss it here, focussing only on the case $\delta = 1$ where we can prove unconditional results about torsion.

 \subsubsection{}
 
Fix $\test$ a finite group.

 Let $\mathbf{L} = \prod_{v} \Hom(D_v, \test)^{\sharp}$ (the restricted 
 product, taken with respect to unramified representations).

 Let $\mathbf{G}$ be the $\test$-conjugacy classes of surjective homomorphisms
$\Gal(\Qbar/\Q) \rightarrow \test$. 

Thus there is a natural map $\mathbf{G} \rightarrow \mathbf{L}$. \footnote{The letters $G$ and $L$ stand for {\em global} and {\em local}.} 

The ``mass'' (see remarks after Proposition \ref{MF}) defines a measure on $\Hom(D_v, \test)^{\sharp}$.
For $v \neq \infty$, the set of unramified homomorphisms has mass $1$. Thus, 
this gives rise to a product measure $\mu$ on $\mathbf{L}$. 
Roughly speaking, Bhargava's heuristic suggests that {\em
the expected number of elements of $\mathbf{G}$ inside a large subset $S \subset \mathbf{L}$
should be approximately $\mu(S)$.}

Bhargava formulates this as a conjecture only in the following specific case: For $\test = S_n$, we 
can define the ``discriminant'' of any element of $\mathbf{G}$, i.e.,
the discriminant of the associated degree $n$ extension. This map $\mathbf{G} \rightarrow \mathbb{N}_{\geq 1}$ factors through $\mathbf{L}$. 
 Let $\mathbf{L}(N) \subset \mathbf{L}, \mathbf{G}(N) \subset \mathbf{G}$ be the subsets
 of discriminant $\leq N$. 
Then Bhargava conjectures that
\begin{equation} \label{bharg} |\mathbf{G}(N)| \sim  \mu \left(   \mathbf{L}(N)  \right) \ \ N \rightarrow \infty. \end{equation}
Bhargava proves this conjecture for $n \leq 5$.
 If we move away from $\test = S_n$, there are no shortage of simple examples (e.g., $\test = S_3^3$) where the heuristic fails in the precise form above. Our goal, however, is only to use it as an indication
 of the rough order of magnitude. 
 \subsubsection{}

Return to the setting of Conjecture \ref{ashserre}. Let $S$ be a finite subset of places of $\Q$, not containing $p$, and let $N(S)$ the corresponding squarefree integer - product of the primes in $S$.  
We are interested in the number $A(S)$ of Galois representations 
 $\rho:\Gal(\Qbar/\Q) \rightarrow G^{\vee}(k)$ as in Conjecture \ref{ashserre} -- 
 or, rather, the behavior of $A(S)$ on average with respect to $S$. 
  
For $v = \infty, p, \ell \neq p$ respectively, we define $M(v) \subset \Hom(D_v, G^{\vee}(k))^{\sharp}$ to consist of (respectively) odd, nearly ordinary of type $\bx$, and Steinberg-at-$\ell$ maps.  Let 
  $$\mu(S) =    \mass \ M(\infty) \cdot \mass \ M(p) \cdot \prod_{\ell \in S} \mass \ M(\ell).$$
  
  Now, Bhargava's heuristics suggest that:
  $$\sum_{N(S) < X} A(S) \sim \sum_{N(S) < X} \mu(S).$$
   
   We claim that  $\mu(S) = q^{-\delta(G)} (1+O(q^{-1/2}))$ for any such $S$; this follows from
the equality $2 \dim(N) + \rank(K) - \dim(G) = -\delta(G)$ and the following remarks: 
  \begin{enumerate}
  \item For $\ell \neq p$ a finite prime, $\mass \ M(\ell) = 1$: the Steinberg-at-$\ell$ maps form a single conjugacy class
  after restriction to $I_{\ell}$. 
  
  \item Let $\mathbf{O}$ be the algebraic subvariety of  $G^{\vee}$
consisting of odd involutions. We regard $\mathbf{O}$ as a scheme over $\mathbb{Z}$. It follows from Proposition \ref{odd} that
$\mathbf{O} \times_{\Z} \C $ is an irreducible variety of dimension
$\dim(N) + \rank(K)$.\footnote{Indeed, Proposition \ref{odd} shows that the complex points of $\mathbf{O} \times_{\Z} \C$ form a single $G^{\vee}(\C)$ orbit, and the dimension $D$ of the Lie algebra of 
the stabilizer of any point $\Theta$ satisfies: $2D- \dim (G) = {\rm trace} (\Theta )$. 
We get $D= \frac{\dim(G) + \rank(G) - 2 \rank(K)}{2}$ and the claim follows.}
  By the Lang-Weil estimates, 
 $|\mathbf{O}(k)| = q^{\dim(N) + \rank(K)} (1 + O(q^{-1/2})).$ 
 We conclude:
 $$\mathrm{mass}(M(\infty)) = q^{\dim(N) + \rank(K) - \dim(G)} (1+O(q^{-1/2})).$$

\item
For $v=p$, Proposition \ref{MF} gives
$\mathrm{mass}( M(p)) = q^{\dim(N ) }.$
\end{enumerate}

We conclude, then, that -- if we suppose valid a generalization of Bhargava's heuristics --  {\em the average value of $A(S)$, over $S$ with $N(S) < X$,
approaches $q^{-\delta(G)} (1+O(q^{-1/2}))$.}
 As we have already discussed at the start
of this section, this is ``compatible'' with the conclusion of Theorem \ref{main} in the case $\delta = 1$.  It also suggests that there should be a ``paucity'' of Steinberg torsion classes when $\delta \geq 2$.

{\em Remark.} One can also try to analyze the issue of torsion via eigenvarieties or Galois deformations. These 
(roughly speaking) parameterize lifts of a given mod $p$ torsion class or Galois representation, 
and conjectures concerning their dimension are therefore related to the 
 the likelihood that a mod $p$ class will lift modulo $p^2$. 
 In our (limited and possibly mistaken) understanding, conjectures such as those of \cite{CE, CalegariMazur}   suggest that the ``likelihood'' 
 that a   given  class modulo $p$ lifts modulo $p^2$ should be of order $p^{-\delta (G)}$. 
Also, Conjecture 6.1 of \cite{CE} also suggests that, when $\delta=1$, the torsion is largely
in ``the middle dimension,'' namely, in degree $\frac{\dim (S) -1}{2}$.

 \section{Combinatorial picture} \label{cp}

This section has a different flavour to the rest of the paper; it is more general, in that
we deal with an arbitrary cell complex; however, it is also less general in that we deal
with a {\em tower of coverings of a fixed space}, whereas many of our previous results
applied to any sequences of locally symmetric spaces with increasing injectivity radius. 

We have seen in \S \ref{circle2} that ``limit multiplicities formulas'' may be easier to prove in the
context of combinatorial torsion  than from the ``de Rham'' perspective. Such a phenomenon already 
appears in the classical case of normalized Betti numbers converging toward $L^2$-Betti numbers. The most 
general result, due to L\"uck \cite{Luck2}, is proved in the combinatorial setting and no general 
proof is known 
in the analytical one. 

\subsection{}

From now on let $K$ be a path-connected
finite cell complex having a universal cover $\widetilde{K}$ and fundamental group $\Gamma$, so that $K$ is the 
quotient of $\widetilde{K}$ by (left) deck transformations.  By ``the cover of $K$ corresponding to'' a subgroup $\Gamma' \leqslant \Gamma$, we mean the quotient $\widetilde{K}/\Gamma'$ together with the induced map to $K$. 

Now let $\Lambda$ be a quotient of $\Gamma$, i.e., we regard it as being equipped with a map $\Gamma \twoheadrightarrow \Lambda$.  Let $\widehat{K}$ be the covering space of $K$ corresponding to the 
kernel of $\Gamma \rightarrow \Lambda$;  let $\Lambda_N \subset \dots \subset \Lambda_1 = \Lambda$ be a decreasing sequence of finite index normal subgroups, with $\cap_{N=1}^{\infty} \Lambda_N = 1$,
and $K_N$ the cover corresponding to the preimage of $\Lambda_N$ in $\Gamma$. Denote by
$$T (K_N) = \prod \- {}^* | H^i (K_N)_{\tors}|,$$
the {\it torsion part of the cohomology}, and define the regulator $R^i(K_N)$ 
as in \eqref{RT}.

Let $C^j(\widehat{K})$ be the cellular cochain complex of $\widehat{K}$.
  Each $C^j$
is a finitely generated free $\Z[\Lambda]$-module.  
A cell complex for $K_N$ is given by $C^j(\widehat{K}) \otimes_{ \Z\Lambda} \Z[\Lambda/\Lambda_N]$. In all cases, we equip the complexes with the inner products in which the characteristic functions of distinct cells form an orthonormal basis.

Let $\Delta_j = d_{j-1} d_{j-1}^* + d_{j}^* d_j$; here $d_j^*$ is the adjoint of $d_j$ with respect to the inner product described. Each $\Delta_j$ is a $\Z[\Lambda]$-endomorphism of the 
finite free $\Z[\Lambda]$-module $C^j$. 

  In particular, if we fix a basis for $C^j$
as a left $\Z[\Lambda]$-module,   $\Delta_j$ is given by right multiplication by a unique
matrix with entries in $\Z[\Lambda]$.  We indeed fix such bases, and, in what follows,
regard $\Delta_j$ as an element of $\mathrm{Mat}_{n_j}(\Z[\Lambda])$.

\subsection{}
Let us discuss, first of all, the simple case where $\Lambda =\Z^m, \Lambda_N = N \Z^m$.

\subsubsection*{$\ell^2$-acyclicity.} 
For $(z_1, \dots, z_m) \in (\C^{\times})^m$, let $\Delta_j(z) \in \mathrm{Mat}_{n_j}(\C)$ be obtained from $\Delta_j$ via the homomorphism
$(n_1, \dots, n_m) \in \Lambda \mapsto \prod_{i} z_i^{n_i}$. We say that $\widehat{K}$ is $
\ell^2$-acyclic if $\det \Delta_j(z)$ -- which is
always a polynomial in $z_i^{\pm 1}$ -- is not 
identically vanishing for any $j$.  (This condition is equivalent to the vanishing of the reduced $\ell^2$-cohomology.)

Note that
$$\dim H^j (K_N,\C)= \sum_{\mathbf{z}^N = 1} \dim \ker \Delta_j(z).$$
The set of roots of unity contained in the zero locus of $\ker \Delta_j$ is contained
in a finite union of translates of proper subtori of $(S^1)^m$ (this is a simple example of a ``Manin-Mumford'' phenomenon; see \cite{ClairWhyte} for a discussion of this in topological context) so it follows that, if $\widehat{K}$ is $\ell^2$-acyclic,  then
  \begin{equation} \label{manman} \dim H^j(K_N,\C) \leq A N^{m-1},\end{equation} the constant $A$ depending only on $K$.

\subsubsection*{$\ell^2$-torsion.} 
We define it as:
\begin{equation}\label{taudef} \tau^{(2)}(\widehat{K};\Lambda) := \sum_{j} (-1)^{j+1} j  \int_{\mathbf{z} \in (S^1)^m} \log |\det \Delta_j (\mathbf{z}) | d\mathbf{z},\end{equation} 
the integral being taken with respect to the invariant probability measure on the compact torus $(S^1)^m$. 

For example, let $k$ be an oriented knot with exterior $V = {\Bbb S}^3 - {\rm int} \ N(k)$, where $N(k)$ is a regular 
neighborhood of $k$. The meridianal generator of the knot group $\pi_1 ({\Bbb S}^3 - k)$ represent 
a distinguished generator $t$ for its abelianization. We identify this generator with the standard generator
$1$ of ${\Bbb Z}$.  Fix $K$ a triangulation of $V$ as a finite polyhedron and let $\widehat{K}$ be its maximal Abelian cover corresponding to the kernel of $\pi_1 ({\Bbb S} - k) \rightarrow {\Bbb Z}$.
Let $\Delta$ be the Alexander polynomial of $k$. It is never identically vanishing and $\widehat{K}$
is $\ell^2$-acyclic, see \cite{Milnor} for more details. It is well known see e.g. \cite[(8.2)]{LiZhang} that
\begin{eqnarray} \label{L2torsionk}
\tau^{(2)} (\widehat{K} ; {\Bbb Z} )  = -  \log M(\Delta ). 
\end{eqnarray}

\begin{thm} \label{Papprox} (Growth of torsion and regulators in abelian covers). 
Notation as above; suppose that $\widehat{K}$ is $\ell^2$-acyclic. Then
\begin{equation} \label{rconv} \frac{\log R^i(K_N)}{[\Lambda:\Lambda_N]} \rightarrow 0, \ \ 0 \leq i \leq \dim(K). 
\end{equation}
Moreover, if $m=1$, we have
\begin{equation} \label{conv} \frac{\log T(K_N)}{[\Lambda:\Lambda_N]} 
\rightarrow -\tau^{(2)}, \end{equation} 
where the $\tau^{(2)}$ is the $\ell^2$-torsion. 
\end{thm}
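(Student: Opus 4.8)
The plan is to reduce everything to eigenvalue distribution statements for the family of matrices $\Delta_j(\mathbf z)$ as $\mathbf z$ ranges over $[\Lambda:\Lambda_N]$-torsion points of the torus, exactly parallel to the circle example of \S\ref{circle2} and the heuristic \eqref{unjustified}. Fix $j$ and abbreviate $P(\mathbf z) := \det \Delta_j(\mathbf z)$, a Laurent polynomial in $z_1^{\pm1},\dots,z_m^{\pm1}$ which, by $\ell^2$-acyclicity, is not identically zero. On $K_N$ the operator $\Delta_j$ block-diagonalizes over the characters $\chi$ of $\Lambda/\Lambda_N$, i.e. over the points $\mathbf z$ with $\mathbf z^N = 1$; its spectrum is the multiset $\bigcup_{\mathbf z^N=1} \mathrm{spec}\,\Delta_j(\mathbf z)$. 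Hence
\begin{equation}
\log\frac{\vol(\ker d_{j-1})\,\vol(\image d_{j-1})}{\text{(analogous factors)}}
\end{equation}
is governed by $\sum_{\mathbf z^N=1}\log\detp\Delta_j(\mathbf z)$, and by \eqref{volquot}, \eqref{RT} the quantity $\log T(K_N) - \sum_i(-1)^i\log R^i(K_N)$ equals $\sum_j(-1)^j\sum_{\mathbf z^N=1}\log\detp\Delta_j(\mathbf z)$ up to the contribution of zero eigenvalues, which is exactly what the regulators absorb. The target is to show this, divided by $[\Lambda:\Lambda_N]=N^m$, converges to $-\tau^{(2)}$, and separately that each $\log R^i(K_N)$ is $o(N^m)$.

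For \eqref{rconv}: the regulator $R^i(K_N)$ is the covolume of the free part of $H^i(K_N;\Z)$ in harmonic cochains, and $\log R^i$ is bounded in absolute value by a polynomial expression in the entries of a basis of cocycles; since $\dim H^i(K_N,\C)\le A N^{m-1}$ by \eqref{manman} and the matrix entries grow at most polynomially in $N$ (the complex $C^*(\widehat K)\otimes\Z[\Lambda/\Lambda_N]$ has size $O(N^m)$ with bounded individual entries), one gets $\log R^i(K_N) = O(N^{m-1}\log N) = o(N^m)$. This is the easy part and is essentially the argument already used implicitly in \S\ref{circle2} via the snake lemma. For \eqref{conv}, restrict to $m=1$, so $\Lambda=\Z$, $\Lambda_N=N\Z$, and $P(z)=\det\Delta_j(z)$ is a one-variable Laurent polynomial; then
\begin{equation}
\frac{1}{N}\sum_{z^N=1}\log\detp\Delta_j(z)
\end{equation}
must be shown to converge to $\int_{|z|=1}\log|P(z)|\,\frac{dz}{2\pi i z}$. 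Factoring $P(z)=c\,z^{a}\prod_k(z-\alpha_k)$ over $\overline{\Q}$, this is the statement $\frac1N\log\prod_{z^N=1}|P(z)| = \frac1N\sum_k\log|\alpha_k^N-1| + o(1) \to \sum_k\log^+|\alpha_k| = \log M(P)$, which is exactly the Gelfond/Lind lemma \eqref{gelf} already invoked in \S\ref{circle2}: the roots $\alpha_k$ of absolute value $\ne 1$ contribute their Mahler measure, and those with $|\alpha_k|=1$ contribute $o(N)$ by \eqref{mahler}. The removal of zero eigenvalues (the $\detp$ versus $\det$ distinction) is handled by noting $\#\{z^N=1: P(z)=0\}$ is bounded independent of $N$ — since the roots of unity on the zero locus of $P$ lie in finitely many, hence here finitely many points — and each such excised eigenvalue is a root of a $\Delta_j(z)$ with polynomially bounded entries, contributing $O(\log N)$ total.

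The main obstacle is precisely the case $|\alpha_k|=1$, i.e. when $\Delta_j(z)$ is singular or near-singular at points of the unit circle other than roots of $P$; this is the same ``small eigenvalue'' issue flagged after Theorem \ref{approxthm} and in \eqref{mahler}, and it is why one needs the genuinely nontrivial input of Gelfond's theorem (via Baker) rather than an elementary estimate. Concretely, one must rule out that the eigenvalues of $\Delta_j(e^{2\pi i k/N})$ approach $0$ fast enough, for enough values of $k$, to spoil the average; the clean way is to not track individual eigenvalues but to use $\log\detp\Delta_j(z) = \log|P(z)| - (\text{log of product of the finitely many eigenvalues forced to vanish})$ and apply \eqref{gelf} to $P$ directly, so that the delicate Diophantine input is quarantined into the cited lemma. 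The restriction to $m=1$ in \eqref{conv} is essential for this reduction: for $m\ge 2$ the relevant statement would be a several-variable analogue of \eqref{gelf} governing how $\log|P|$ integrated over torsion points of $(S^1)^m$ approaches the Mahler measure, which is known (Boyd–Lawton) but not needed here, so I would simply not assert \eqref{conv} beyond $m=1$, matching the theorem as stated.
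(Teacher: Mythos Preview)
Your treatment of \eqref{conv} is correct and matches the paper's: both block-diagonalize over characters to get $\log\det'\Delta_j\big|_{K_N}=\sum_{\zeta^N=1}\log\det'\Delta_j(\zeta)$, combine \eqref{RT} and \eqref{dLap}, and then factor $\det\Delta_j(z)$ and invoke the Gelfond--Lind lemma \eqref{gelf} to handle roots on the unit circle. That part is fine.

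The gap is in \eqref{rconv}. You assert $|\log R^i|=O(N^{m-1}\log N)$ on the grounds that $\dim H^i\le AN^{m-1}$ and the differentials have bounded entries, but this does not follow. The regulator is the covolume of the image of integral cocycles in the harmonic subspace; a $\Z$-basis for $\ker d_i$ can a priori have entries as large as subdeterminants of $d_i$, i.e.\ of size $\exp(O(N^m))$, so the naive Hadamard/integral-lattice bound gives only $|\log R^i|=O(N^m)$, which is useless. Knowing that $\dim H^i$ is small does not by itself produce \emph{short} integral cocycles spanning cohomology, nor does it rule out $R^i$ being exponentially small. The snake-lemma trick of \S\ref{Reg1} was specific to a single map with an explicitly identified one-dimensional kernel; it does not generalize to an arbitrary complex.

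The paper fills this gap with Proposition~\ref{subsec:CR} and Lemma~\ref{gaction}, which use the $\Lambda/\Lambda_N$-action in an essential way. Let $\Xi$ be the set of characters occurring in $H^i(K_N,\C)$, so $|\Xi|=O(N^{m-1})$ by \eqref{manman}. The element $e_\Xi=\sum_{\chi\in\Xi}\chi(1)\sum_g\overline{\chi(g)}g$ lies in $\Z[\Lambda/\Lambda_N]$ (Galois-invariance of $\Xi$), so its image $B^*\subset C^*(K_N)$ is an integral subcomplex of rank $O(N^{m-1})$ that carries all the real cohomology isometrically. One then applies the elementary bound \eqref{regbound1} to $B^*$, where the relevant dimension is now $O(N^{m-1})$ rather than $O(N^m)$, and compares $R^i(B^*)$ with $R^i(K_N)$ at a cost polynomial in $|G|$. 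The reverse inequality comes from duality (Lemma~\ref{duality}). This isotypic-projection step is the actual content of \eqref{rconv}, and your sketch omits it.
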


 Applied to cyclic covers of a knot complement,  Theorem \ref{Papprox} translates into the theorem of Silver and Williams \cite{SilverWilliams} mentioned in the introduction.
Likely one could prove a version of \eqref{conv} for $m > 1$ replacing $\lim$ by $\limsup$;
also, one can establish \eqref{conv} for general $m$ if we suppose that each $\det \Delta_j$ 
is everywhere nonvanishing on $(S^1)^m$, this being the analog of ``strongly acyclic.'' 

\proof 
\eqref{rconv} follows from the subsequent Proposition \ref{subsec:CR}, taking into account \eqref{manman} and the fact that $\Lambda/\Lambda_N$ is abelian.

Now apply \eqref{RT} and \eqref{dLap}. They imply, together, that:
$$\log  \left( \frac{\prod_{i}^* R^i(K_N)}{ T(K_N) }  \right) =  \frac{1}{2}\sum_{j} (-1)^{j+1} j \left( \sum_{\zeta^N = 1}  \log \det{} ' \Delta_j(\zeta) \right). $$
\eqref{conv} then follows from \eqref{mahler}:
 
Write $\Delta_j(z) = a \prod_{j} (z-z_j)$. 
Note that $z \mapsto \det \Delta_j(z)$ is non-negative on the unit circle and has only finitely many zeroes amongst roots of unity, 
so 
$$\sum_{\zeta^N = 1}  \log \det{}' \Delta_j(\zeta)  -  N \log a - \sum_{z_j^N \neq 1} \log |z_j^N-1| $$
is bounded as $N \rightarrow \infty$. 
Thereby, $\frac{1}{N} \sum_{\zeta^N = 1}  \log \det{}' \Delta_j(\zeta) $ approaches
$\log a + \sum_j \log^+ |z_j|$, which is the integral of $\log \det \Delta_j(z)$ over $S^1$. 
 \qed

\begin{prop} \label{subsec:CR}
Let $\delta_N$ be the sums of squares of all the dimensions of $\Lambda/\Lambda_N$-representations occuring in $H^i(K_N, \C)$ (counted without multiplicity). 
Then: 
$$
   \left| \frac{\log R^i}{\delta_N}\right|
 \leq A \log [\Lambda:\Lambda_N] + B,$$
 the constants $A,B$ depending only on $K$.   
\end{prop}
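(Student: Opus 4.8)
The plan is to bound $\log R^i$ both above and below by a bounded multiple of $b_i := \dim_{\C} H^i(K_N;\C)$, and then to invoke the elementary bound $b_i \le n_i\,\delta_N$, where $n_i$ is the number of $i$-cells of $K$. For the latter: as a $\C[\Lambda/\Lambda_N]$-module, $C^i(K_N;\C) = \C[\Lambda/\Lambda_N]^{n_i} \cong \bigoplus_{\pi}\pi^{\oplus n_i\dim\pi}$, so the $\pi$-isotypic subspace of $C^i(K_N;\C)$ has dimension $n_i(\dim\pi)^2$; since $H^i(K_N;\C)$ is a subquotient of $C^i(K_N;\C)$ as a $\Lambda/\Lambda_N$-module, the multiplicity $m_\pi$ of $\pi$ in it satisfies $m_\pi\dim\pi \le n_i(\dim\pi)^2$, and summing gives $b_i = \sum_\pi m_\pi\dim\pi \le n_i\sum_{\pi:\, m_\pi>0}(\dim\pi)^2 = n_i\delta_N$. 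Thus it suffices to prove $|\log R^i| \le b_i\bigl(A'\log[\Lambda:\Lambda_N]+B'\bigr)$ with $A',B'$ depending only on $K$.

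Recall $R^i = \vol(L)$, where $L$ is the lattice $H^i(K_N;\Z)_{\free}$ realised inside the space $\mathcal H^i$ of harmonic $i$-cochains via harmonic representatives; it is a full lattice in $\mathcal H^i \cong \R^{b_i}$. By Minkowski's second theorem, $\log\vol(L) = \sum_{k=1}^{b_i}\log\mu_k(L) + O(b_i\log b_i)$, where $\mu_1(L)\le\cdots\le\mu_{b_i}(L)$ are the successive minima; since $b_i\le n_i\delta_N$ and $\delta_N\le[\Lambda:\Lambda_N]$, the error term is $\le b_i\bigl(\log[\Lambda:\Lambda_N]+C(K)\bigr)$. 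Also, the universal-coefficients pairing $H^i(K_N;\Z)_{\free}\times H_i(K_N;\Z)_{\free}\to\Z$ is unimodular and is computed by the inner product of harmonic representatives (harmonic $i$-cochains and $i$-chains being identified via the cell basis), so $L$ and the analogous homology lattice $L^{\vee}$ are mutually dual; in particular $R^i\cdot R_i = 1$, and the standard transference inequality gives $\mu_k(L)\,\mu_{b_i+1-k}(L^{\vee})\ge 1$. The statement therefore reduces to one geometric claim: $L$ and $L^{\vee}$ each contain $b_i$ linearly independent vectors of norm $\le C(K)[\Lambda:\Lambda_N]$ — equivalently, $H^i(K_N;\Z)$, resp. $H_i(K_N;\Z)$, contains $b_i$ linearly independent classes with cocycle, resp. cycle, representatives of $\ell^2$-norm $\le C(K)[\Lambda:\Lambda_N]$.

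To construct such representatives I would exploit that $K_N\to K$ arises from the fixed covering $\widehat K\to K$: one has $C_\bullet(K_N;\Z) = C_\bullet(\widehat K;\Z)\otimes_{\Z[\Lambda_N]}\Z$, where $C_\bullet(\widehat K;\Z)$ is a complex of free $\Z[\Lambda]$-modules whose ranks $n_j$ and whose differentials (matrices over $\Z[\Lambda]$ with a bounded number of terms and bounded coefficients) depend only on $K$; over $\Z[\Lambda_N]$ each $C_j(\widehat K;\Z)$ is free of rank $n_j[\Lambda:\Lambda_N]$. When $\Z[\Lambda]$ is Noetherian of finite global dimension — which covers the case $\Lambda=\Z^m$ needed for Theorem \ref{Papprox}, and more generally $\Lambda$ polycyclic — one fixes a bounded free $\Z[\Lambda]$-resolution of each $H_q(\widehat K;\Z)$, reads it as a $\Z[\Lambda_N]$-resolution, and obtains from the resulting Cartan--Leray (hyper-$\mathrm{Tor}$) spectral sequence $E^2_{pq} = \mathrm{Tor}_p^{\Z[\Lambda_N]}\bigl(H_q(\widehat K;\Z),\Z\bigr) \Rightarrow H_{p+q}(K_N;\Z)$ a generating set of $H_i(K_N;\Z)$ of size $\le C(K)[\Lambda:\Lambda_N]$; tracing the boundedly many, bounded-rank differentials involved, each generator is represented by a cycle that is a $\Z$-combination of $\le C(K)[\Lambda:\Lambda_N]$ translates of a fixed bounded family of chains, hence of norm $\le C(K)[\Lambda:\Lambda_N]$. (It is precisely the rows $p\ge 1$ that produce the genuinely ``long'' cycles — e.g.\ the fundamental cycle of a finite cyclic cover, the sum of all translates of one cell, of norm $\asymp[\Lambda:\Lambda_N]^{1/2}$.) The cochain side is identical with $\Hom$ in place of $\otimes$; for a general finitely generated $\Lambda$ one replaces the spectral-sequence input by a direct construction — homology classes of $K_N$ built from cycles supported on $\le C(K)$ cells, closed up if necessary along a fixed contractible subcomplex of $K_N$ of diameter $\le C(K)[\Lambda:\Lambda_N]$.

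The main obstacle is this last step: propagating a uniform polynomial-in-$[\Lambda:\Lambda_N]$ bound on the complexity of the chosen representatives through the spectral sequence (or the collapsing construction); the rest is formal. Once it is in hand, $\mu_k(L)\in\bigl[C(K)^{-1}[\Lambda:\Lambda_N]^{-1},\,C(K)[\Lambda:\Lambda_N]\bigr]$ for all $k$, so $|\log R^i| \le b_i\log\bigl(C(K)[\Lambda:\Lambda_N]\bigr) + O(b_i\log b_i) \le n_i\delta_N\bigl(A'\log[\Lambda:\Lambda_N]+B'\bigr)$, which is the assertion with $A=n_iA'$, $B=n_iB'$. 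Finally, for \eqref{rconv}: in the abelian setting of Theorem \ref{Papprox} all $\dim\pi=1$, so $\delta_N\le b_i$, and \eqref{manman} gives $\delta_N\bigl(A\log[\Lambda:\Lambda_N]+B\bigr) = O(N^{m-1}\log N) = o([\Lambda:\Lambda_N])$.
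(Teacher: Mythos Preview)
Your approach is genuinely different from the paper's, and you yourself identify the gap: the construction of $b_i$ linearly independent cycle (and cocycle) representatives of norm $\le C(K)[\Lambda:\Lambda_N]$ is only sketched. In the Noetherian case you invoke a Cartan--Leray spectral sequence but do not actually trace the norm bounds through it; for general $\Lambda$ you offer only the phrase ``direct construction.'' This is not a minor bookkeeping issue. Without the group action, the naive bounds on a $\Z$-basis of $\ker d_i \subset \Z^{n_i[\Lambda:\Lambda_N]}$ (Cramer/Smith-type) are exponential in $[\Lambda:\Lambda_N]$, and it is precisely the $\Lambda/\Lambda_N$-structure that one must exploit to do better. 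Your spectral-sequence idea is a plausible way to encode that structure when $\Z[\Lambda]$ has finite global dimension, but as written it is a plan, not a proof; and for arbitrary finitely generated $\Lambda$ it is not clear your route reaches the goal at all.

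The paper bypasses this entirely by using the group action in a much more direct way. Set $G=\Lambda/\Lambda_N$ and let $\Xi$ be the set of irreducible characters occurring in $H^i(K_N,\C)$. The element $e_\Xi=\sum_{\chi\in\Xi}\chi(1)\sum_{g}\overline{\chi(g)}\,g$ lies in $\Z[G]$, satisfies $e_\Xi^2=|G|\,e_\Xi$, and increases norms by at most $|G|^4$. Its image $B^\bullet\subset C^\bullet(K_N;\Z)$ is a subcomplex whose $i$th term has dimension exactly $n_i\delta_N$; since every harmonic form is $e_\Xi/|G|$-fixed, the inclusion $B^\bullet\hookrightarrow C^\bullet(K_N)$ induces an isometric isomorphism on real cohomology, and one checks that $|G|\cdot H^i(C^\bullet;\Z)$ lands in the image of $H^i(B^\bullet;\Z)$. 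Thus $R^i(C^\bullet)\ge|G|^{-b_i}R^i(B^\bullet)$, and the elementary lower bound $R^i(B^\bullet)\ge(M\nu|G|^4)^{-n_i\delta_N}$ for a complex of integral lattices of dimension $\le n_i\delta_N$ finishes one direction; the other follows by applying the same argument to the dual complex and the identity $R^i\widehat R^i=1$ (your duality step). No successive minima, no spectral sequences, and no hypothesis on $\Lambda$ beyond what is stated.

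What your route would buy, if completed, is a bound of shape $|\log R^i|\le b_i(A'\log[\Lambda:\Lambda_N]+B')$ rather than $n_i\delta_N(\cdots)$, which is sharper when the multiplicities are small compared to the dimensions. For the intended application (Theorem~\ref{Papprox}, abelian $\Lambda$) this gains nothing, since there $\delta_N\le b_i$. The paper's idempotent trick is both simpler and strictly more general; I would recommend learning it.
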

Note that the Proposition does not require $\Lambda/\Lambda_N$ to be abelian. 
  The basic idea is to use the action of $\Lambda/\Lambda_N$ to split the cochain complex of $K_N$ into two pieces: an acyclic piece, and a piece contributing all the cohomology. 
  Under favorable circumstances, the latter piece is small, and it follows that $R^i$ is small.  
 In practice, we cannot literally split the complex into a direct sum of $\Z$-subcomplexes; nonetheless, we can do so to within a controlled error.

More specifically, the proof will follow from the string of subsequent Lemmas:  the bound $\frac{\log R^i}{\delta_N} \geq \cdots$ follows
by specializing Lemma \ref{gaction} to the case of the cochain complex of $K_N$;
the inequality $-\frac{\log R^i}{\delta_N} \geq \cdots$
follows from applying that Lemma to the dual complex and using Lemma \ref{duality}.  

We say that a metrized lattice is {\em integral} if the inner product $\langle - ,  - \rangle$ takes integral values on $L$. 
Note that a sublattice of an integral lattice is also integral. 
  
  \begin{lem*}
  Any integral lattice has volume $\geq 1$. 
  \end{lem*}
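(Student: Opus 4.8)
The plan is to compute directly with the Gram matrix. Let $L$ be an integral lattice of rank $n$, fix a $\Z$-basis $e_1,\dots,e_n$, and form the Gram matrix $M = \bigl(\langle e_i,e_j\rangle\bigr)_{1\le i,j\le n}$. By the definition recalled in the paragraph introducing $\vol$, we have $\vol(L) = \sqrt{|\det M|}$, so it suffices to show $|\det M|\ge 1$, and in fact I will show $\det M\ge 1$.

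First I would use integrality: by hypothesis $\langle -,-\rangle$ takes integral values on $L$, so every entry $\langle e_i,e_j\rangle$ lies in $\Z$, whence $\det M\in\Z$. Next I would use positive-definiteness: since $\langle -,-\rangle$ is a metric, $M$ is a real symmetric positive-definite matrix, so all its eigenvalues are strictly positive and therefore $\det M>0$. A positive integer is at least $1$, so $\det M\ge 1$ and $\vol(L)=\sqrt{\det M}\ge 1$.

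There is no genuine obstacle here; the statement is essentially immediate once the Gram matrix is written down. The only point worth flagging — and the reason the hypotheses are stated as they are — is that positive-definiteness is precisely what excludes $\det M = 0$: an integral lattice that was merely equipped with a symmetric integral bilinear form could be degenerate or indefinite, and then the conclusion would fail. Integrality alone gives $\det M\in\Z$; it is the metric assumption that upgrades this to $\det M\ge 1$.
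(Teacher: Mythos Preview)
Your proof is correct and is exactly the standard argument: the Gram matrix has integer entries and is positive definite, so its determinant is a positive integer. The paper does not actually write out a proof of this lemma at all, treating it as immediate; your argument is the natural way to justify it.
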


\begin{lem*} Let $f:A_1 \rightarrow A_2$ be a map of integral lattices of dimensions $\leq n$. 
Suppose every singular value of $f$ is $\leq M$, where we suppose $M \geq 1$.   Then:
 $$1  \leq \vol(\ker f)  \leq  \vol(A_1) M^n,$$
 and the same inequality holds for $\vol(\image f)$. 
\end{lem*}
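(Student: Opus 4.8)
The plan is to prove the two-sided bound $1 \leq \vol(\ker f) \leq \vol(A_1) M^n$ (and the analogous statement for $\image f$) by combining the elementary lemma that integral lattices have volume $\geq 1$ with the multiplicativity identity \eqref{erasmus}, namely $\frac{\vol(A_1)}{\vol(\ker f)} \detp(f) = \vol(\image f)$.

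First I would handle the lower bounds. Since $A_1$ is integral, so is its sublattice $\ker f$, and so is $\image f$, which embeds (isometrically, in the induced metric) as a sublattice of the integral lattice $A_2$. Hence the preceding Lemma gives $\vol(\ker f) \geq 1$ and $\vol(\image f) \geq 1$ immediately.

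Next the upper bounds. From \eqref{erasmus} we have $\vol(\ker f) = \frac{\vol(A_1) \detp(f)}{\vol(\image f)}$. Using $\vol(\image f) \geq 1$ from the previous step, this yields $\vol(\ker f) \leq \vol(A_1) \detp(f)$. Now $\detp(f)$ is the product of the nonzero singular values of $f$, each of which is $\leq M$, and there are at most $n$ of them (since $\dim A_1 \leq n$); as $M \geq 1$, we conclude $\detp(f) \leq M^n$, whence $\vol(\ker f) \leq \vol(A_1) M^n$. For $\vol(\image f)$: rearrange \eqref{erasmus} to $\vol(\image f) = \frac{\vol(A_1) \detp(f)}{\vol(\ker f)} \leq \vol(A_1) \detp(f) \leq \vol(A_1) M^n$, using $\vol(\ker f) \geq 1$ and the same bound $\detp(f) \leq M^n$.

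There is no real obstacle here; the only point requiring a moment's care is that the metrics on $\ker f$ and $\image f$ are the \emph{induced} ones (as specified right after \eqref{erasmus}), so that the integrality Lemma genuinely applies to them, and that the rank of the image (hence the number of nonzero singular values) is bounded by $\dim A_1 \leq n$ rather than $\dim A_2$. Both are automatic from the definitions.
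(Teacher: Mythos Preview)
Your proposal is correct and follows exactly the approach of the paper, which simply cites the prior Lemma and \eqref{erasmus}; you have just spelled out the details of that one-line argument.
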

\proof This follows from the prior Lemma and \eqref{erasmus}. \qed

\begin{lem*}
Let $\Abull$ be a complex, as in \eqref{abullet}. Suppose every $A^i$
is an integral metrized lattice (not necessary of volume one). Let $\nu > 1 , M > 1, D$ be such that:
\begin{enumerate}
\item Each $A^i \otimes \Q$ has a basis consisting of elements of $A^i$ of length $\leq \nu$; 
\item $\dim(A^i) \leq D$ for all $i$;
\item All differentials $d_i: A^i \rightarrow A^{i+1}$ have all singular values $\leq M$.
\end{enumerate}
Then: 
 \begin{equation} \label{regbound1} R^i(\Abull) \geq  (M   \nu )^{-D}   \end{equation} 
 \end{lem*}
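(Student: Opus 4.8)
The plan is to read the bound off directly from the volume identity \eqref{volquot}, combined with the two preceding Lemmas and Hadamard's inequality. Rewriting \eqref{volquot} in the form
$$R^i(\Abull) = |H^i(\Abull)_{\tors}|\cdot \frac{\vol(\ker d_i)}{\vol(\image d_{i-1})} \;\geq\; \frac{\vol(\ker d_i)}{\vol(\image d_{i-1})}$$
(using that $H^i(\Abull)_{\tors}$ is a finite group, so $|H^i(\Abull)_{\tors}|\geq 1$), it suffices to bound the numerator below by $1$ and the denominator above by $(M\nu)^D$. With the convention that the trivial lattice has volume $1$, this also disposes of the boundary cases $i=0$ and $i=n$.

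For the numerator: each $A^j$ is integral by hypothesis, and $\ker d_i$ is a sublattice of $A^i$, hence integral for the induced metric; the first Lemma then gives $\vol(\ker d_i)\geq 1$. For the denominator I would first bound $\vol(A^{i-1})$. By hypothesis (1), $A^{i-1}\otimes\Q$ has a $\Q$-basis $v_1,\dots,v_r$ with $r=\dim A^{i-1}\leq D$ consisting of vectors of $A^{i-1}$ of length $\leq \nu$; the sublattice $L\subset A^{i-1}$ they span satisfies $\vol(L)\leq \prod_j\|v_j\|\leq\nu^r\leq\nu^D$ by Hadamard, and since passing to a larger lattice only decreases the covolume, $\vol(A^{i-1})\leq\vol(L)\leq\nu^D$. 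Now $\image d_{i-1}$ is a sublattice of the integral lattice $A^i$, hence integral, and all singular values of $d_{i-1}$ are $\leq M$ by hypothesis (3); applying the second Lemma to $d_{i-1}\colon A^{i-1}\to A^i$ with the common dimension bound $D$ (and using $M\geq 1$) gives $\vol(\image d_{i-1})\leq\vol(A^{i-1})\,M^D\leq(M\nu)^D$. Combining the two estimates yields $R^i(\Abull)\geq(M\nu)^{-D}$.

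There is no real obstacle here; the argument is essentially bookkeeping around \eqref{volquot}. The only points deserving a moment's care are (a) checking that $A^i$, $\ker d_i$ and $\image d_{i-1}$ are all integral lattices for the induced inner products, so that the two Lemmas apply, and (b) the elementary observation — Hadamard's inequality together with the fact that enlarging a lattice decreases the covolume — that a $\Q$-basis of short vectors of $A^{i-1}$ forces $\vol(A^{i-1})\leq\nu^D$.
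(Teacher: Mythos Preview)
Your proof is correct and is essentially the same as the paper's: both use Hadamard's inequality (together with the fact that a $\Q$-basis of short vectors spans a finite-index sublattice) to get $\vol(A^{i-1})\leq\nu^D$, then feed this into the previous Lemma and \eqref{volquot}. The paper's version is simply terser.
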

 
 \proof 

Hadamard's inequality implies that $\vol(A^i) \leq \nu^D$ (indeed, 
if $x_1, \dots, x_r \in A^i$ form a $\Q$-basis, then
$\vol(A^i) \leq \|x_1 \wedge \dots \wedge x_r\|$, whence the result). 
The result follows from the prior Lemma and \eqref{volquot}.

\qed 
\begin{lem}\label{gaction} Notations as in the prior lemma; let $G$ be a finite group of order $|G|$ acting\footnote{We understand this to mean that the action on $\Abull \otimes \R$ is isometric.}  on $\Abull$.  
  Let 
$\Xi$  be the set of all characters of all irreducible $G$-representation that occur in $H^i(\Abull \otimes \C)$ and let $D$ be the dimension of the $\Xi$-isotypical subspace
of $A^i$. Then 
$$R^i(\Abull)  \geq (M \nu  |G|^5 )^{-D}.$$
\end{lem}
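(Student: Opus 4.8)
The plan is to reduce to the previous (non-equivariant) lemma, but carried out on a subcomplex that carries all the cohomology, with the error from "not being able to honestly split off an acyclic piece" absorbed into the factor $|G|^5$. First I would pass to $\mathbb{Q}[G]$ and use Maschke: over $\mathbb{Q}$ (or after extending scalars to a splitting field) the cochain complex $\Abull \otimes \mathbb{Q}$ decomposes as a direct sum over isotypic components, and the isotypic components indexed by $\Xi$ (those whose characters appear in $H^i$) form a subcomplex $B^\bullet \subseteq \Abull \otimes \mathbb{Q}$ with $H^i(\Abull) \hookrightarrow H^i(B^\bullet)$ rationally; the complementary summand is acyclic, hence contributes regulator $1$ to the alternating identity \eqref{volquot}. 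The subtlety is that $B^\bullet$ is a $\mathbb{Q}$-subspace, not a priori a direct summand of the integral lattice $\Abull$; so I would instead work with $L^i := A^i \cap B^i$, an integral lattice of dimension $\le D$. The key point is that $L^\bullet$ still computes $H^i(\Abull)_{\free}$ with the correct induced metric, because harmonic representatives of the $\Xi$-isotypic cohomology already lie in $B^i \otimes \R$ and hence in the real span of $L^i$, so $R^i(\Abull) = R^i(L^\bullet)$.

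Next I would apply the previous Lemma to $L^\bullet$: I need its metric data. Condition (ii) gives $\dim L^i \le D$ by definition of $D$. For condition (iii), the differentials of $L^\bullet$ are restrictions of those of $\Abull$, so all singular values are still $\le M$. The work is in condition (i): I must produce a $\mathbb{Q}$-basis of $L^i \otimes \mathbb{Q} = B^i$ consisting of short integral vectors. The natural candidate is to take a basis vector $e$ of $A^i$ (length $\le \nu$ by hypothesis on $\Abull$; here I would first normalize — the original $A^i$ may not be volume one, but the hypothesis (i) of the prior lemma already gives a length-$\le\nu$ spanning set, which is all that's inherited) and project it into $B^i$ via the averaging projector $e \mapsto \frac{1}{|G|}\sum_{g\in G}\chi_\Xi$-weighted $g\cdot e$, i.e. the central idempotent $\pi_\Xi = \sum_{\chi\in\Xi} \frac{\dim\chi}{|G|}\sum_g \overline{\chi(g)}\, g$ acting on $A^i$. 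These projected vectors span $B^i$, and $|G|\cdot\pi_\Xi\cdot e$ is an integral vector (since $|G|\pi_\Xi$ has coefficients in $\mathbb{Z}[\zeta]$-combinations bounded by $\sum_\chi \dim\chi \cdot |G| \le |G|^2$ in operator norm, and we can further clear the character-field denominators at cost another factor of $|G|$) of length at most $|G|^{c}\nu$ for a small explicit $c$. Tracking constants, one finds a $\mathbb{Q}$-basis of $B^i$ by integral vectors of length $\le |G|^{4}\nu$ or so, and then the prior Lemma yields $R^i(L^\bullet) \ge (M\cdot|G|^4\nu)^{-D} \ge (M\nu|G|^5)^{-D}$, with the extra power of $|G|$ giving room for the crude bounds.

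The main obstacle I expect is precisely the bookkeeping in that last step: controlling the length of the integral projections $|G|\pi_\Xi e$ and, if one extends scalars to $\mathbb{Q}(\zeta_{|G|})$ to make Maschke transparent, descending back to $\mathbb{Z}$-lattices without losing control of the metric or the dimension bound $D$. The cleanest route is probably to avoid extending scalars: decompose $\Abull\otimes\mathbb{Q}$ using the $\mathbb{Q}[G]$-central idempotents (indexed by Galois orbits of characters, still rational), note $\Xi$ is a union of such orbits, set $\pi = \sum_{\text{orbits meeting }\Xi}(\text{rational central idempotent})$, and observe $|G|\pi$ has integer matrix entries in the regular representation of bounded size — this is the standard fact that $|G|$ times any central idempotent of $\mathbb{Q}[G]$ lies in $\mathbb{Z}[G]$. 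Then $e\mapsto |G|\pi e$ lands in $A^i$, spans $B^i$, and has length $\le |G|^2\nu$; feeding $\nu' = |G|^2\nu$ into the prior lemma gives $R^i \ge (M|G|^2\nu)^{-D} \ge (M\nu|G|^5)^{-D}$. This keeps everything integral throughout, and the dimension count $\dim B^i \le D$ is immediate from the definition of $D$ as the dimension of the $\Xi$-isotypical subspace of $A^i$.
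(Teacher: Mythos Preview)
Your overall strategy matches the paper's: use the projector $e_\Xi = \sum_{\chi\in\Xi}\chi(1)\sum_g\overline{\chi(g)}g \in \Z[G]$ to cut down to a small subcomplex $\Bbull$ (the image of $e_\Xi$, which over $\Q$ agrees with your $L^\bullet$), then invoke the previous lemma. But there is a genuine gap: your claim that $R^i(\Abull) = R^i(L^\bullet)$ is false. The harmonic forms do lie in $B^i\otimes\R$, so the inclusion $\iota$ induces an \emph{isometric isomorphism on real cohomology}; but on integral cohomology it only identifies $H^i(L^\bullet)_{\free}$ with a finite-index \emph{sublattice} of $H^i(\Abull)_{\free}$, so that $R^i(L^\bullet)\ge R^i(\Abull)$ --- the wrong direction for your purpose. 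A concrete counterexample: take $G=\Z/2$ acting on $A^0=\Z$ by sign and on $A^1=\Z^2$ by swapping coordinates, with $d(x)=(x,-x)$. Then $H^1\cong\Z$ with trivial $G$-action, $L^1=\Z(1,1)$, $L^0=0$, and one computes $R^1(\Abull)=1/\sqrt 2$ while $R^1(L^\bullet)=\sqrt 2$.

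The paper supplies exactly the missing comparison: writing $\pi$ for the map induced by $e_\Xi$ and $\iota:\Bbull\hookrightarrow\Abull$ the inclusion, one checks that $\iota_{*\R}\pi_{*\R}$ acts as multiplication by $|G|$ on $H^i(\Abull,\R)$ (because $e_\Xi^2=|G|\,e_\Xi$). Hence the image of $|G|\cdot H^i(\Abull,\Z)$ in the harmonic space lies inside the image of $H^i(\Bbull,\Z)$, giving $R^i(\Abull)\ge |G|^{-\dim H^i}R^i(\Bbull)$. This extra factor of $|G|^{-D}$ is precisely what converts the prior-lemma bound $R^i(\Bbull)\ge (M\cdot|G|^4\nu)^{-D}$ into the stated $(M\nu|G|^5)^{-D}$. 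Your ``cleaner'' final paragraph still omits this step, and the tighter constant $(M|G|^2\nu)^{-D}$ you obtain there is accordingly a bound on the wrong quantity.
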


\proof

Set 
$$e_{\Xi} = \sum_{\chi \in \Xi} \chi(1) \sum_{g \in G} \overline{ \chi(g)} g  \in \C[G].$$
Then, in fact, $e_{\Xi} \in \Z[G]$: its coefficients are algebraic integers that are fixed under 
Galois conjugacy. 
Moreover,  $e_{\Xi}^2 =  |G| e_{\Xi}$; indeed, $e_{\Xi}/|G|$ realizes a projection onto the $\Xi$-isotypic space of any $G$-representation.

Since $|\chi(g)| \leq |G|$ for all $g$,  $|\Xi| \leq |G|$, and each $g \in G$ acts isometrically on $\Abull$, we see that $e_{\Xi}$ acting on $\Abull$
increases norms by at most $|G|^4$.

Let $\pi: \Abull \rightarrow \Abull$ be the endomorphism induced by $e_{\Xi}$;
let $\Bbull$ be the image of $\pi$, and 
 $\iota: \Bbull \hookrightarrow \Abull$ the inclusion.
 Any ``harmonic form'' in $\Abull \otimes \R$ is fixed by the projection $e_{\Xi}/|G|$
 and so belongs to $\Bbull \otimes \R$. 
Therefore  $\iota$ induces an isometric isomorphism
$$\iota_{*\R}: H^*(\Bbull, \R) \rightarrow H^*(\Abull, \R).$$

We are going to show that $\iota_{*\R} \pi_{*\R}$ -- an endomorphism of $H^*(\Abull,\R)$ --  is multiplication by $|G|$. 
Indeed, let $x \in A^j $ be a cycle, so that $dx = 0$. There exists a cycle $y \in B^j \otimes \R$ 
such that $\iota y = x + dz$, for some $z \in A^{j-1}\otimes \R$. 
Write $[x],[y]$ for the corresponding cohomology classes with real coefficients, so that $\iota_{*\R} [y] = [x]$. 
Now $\iota_{*\R} \pi_{*\R}[x]$ equals
 $\iota_{*\R} \pi_{*\R} \iota_{*\R} [y] = |G| \iota_{*\R} [y]  = |G| \cdot  [x]$.  The claim follows.

This implies that the image of $|G| \cdot H^i(\Abull, \Z)$ inside $H^i(\Abull, \R)$
is contained in the image of $H^i(\Bbull, \Z )$ inside $H^i(\Bbull, \R) \stackrel{\sim}{\rightarrow} H^i(\Abull, \R)$. Thus, 
\begin{equation}
 \label{newreg} R^i(\Abull) \geq |G|^{-\dim H^i} R^i(\Bbull). \end{equation} 
By definition of $B^i$ as the image of the endomorphism $\pi$, 
which increases norms by at most $|G|^4$, $B^i \otimes \Q$ is generated as a $\Q$-vector space by elements of $B^i$ of length 
$\leq |G|^4 \nu$.   Finally, $\dim B^i \leq D$.
Combining \eqref{newreg} and \eqref{regbound1} gives the desired conclusion. 
\qed

\begin{lem} \label{duality}  
Let $\Abull$ be a complex, as in \eqref{abullet}, and let $\widehat{\Abull}$ be the
dual complex
{\em dual complex} $$0 \leftarrow \widehat{A}^0 \leftarrow \widehat{A}^1 \leftarrow \dots$$
where $\widehat{A^j} = \Hom(A^j, \Z)$, endowed with the dual metric. 
Then the regulator $\widehat{R}^j$ of $\widehat{\Abull}$ satisfies:
$$\widehat{R}^j \cdot R^j = 1.$$
\end{lem}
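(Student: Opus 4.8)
The plan is to reduce the statement to the linear-algebra identity \eqref{erasmus}, applied degree by degree. Fix $j$ and write $d = d_{j-1}: A^{j-1} \to A^j$ and $d' = d_j : A^j \to A^{j+1}$, with adjoints $d^*, d'^*$ taken with respect to the given metrics. Under the dual complex, the differential $\widehat{A}^{j+1} \to \widehat{A}^j$ is the transpose of $d' = d_j$, which, once we identify $\widehat{A}^j$ with $A^j$ via the metric (an isometry onto its image sending the dual lattice to the dual lattice), becomes exactly the adjoint $d'^*: A^{j+1} \to A^j$ in the sense of metrized lattices; similarly $\widehat{A}^j \to \widehat{A}^{j-1}$ becomes $d^*: A^j \to A^{j-1}$. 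Thus, after this identification, the cohomology $H^j(\widehat{A}^\ast)$ is computed as $\ker(d^*)/\image(d'^*)$ sitting inside $A^j_\R$.

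The key step is the Hodge-theoretic decomposition $A^j_\R = \image(d) \oplus \mathcal{H}^j \oplus \image(d'^*)$, where $\mathcal{H}^j = \ker(d^*) \cap \ker(d') = \ker \Delta_j$ is the space of harmonic representatives. By definition, $R^j(\Abull)$ is the covolume of the image of $H^j(\Abull)_{\free}$ in $\mathcal{H}^j$ under the identification of free cohomology with harmonic forms — equivalently, the covolume of $\big(\image(d)^\perp \cap \ker(d')\big)$-lattice, which is $\mathcal{H}^j \cap (\text{integral points})$ suitably interpreted. For the dual complex, the harmonic space is the \emph{same} subspace $\mathcal{H}^j \subset A^j_\R$ (harmonicity is symmetric in $d$ and $d^*$), but now free cohomology of $\widehat\Abull$ in degree $j$ is identified with $\image(d^*)^\perp \cap \ker(d)$ inside $\widehat{A}^j \cong A^j$. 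So $\widehat R^j$ is the covolume of the dual lattice's harmonic projection inside the \emph{same} real vector space $\mathcal{H}^j$.

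The final step is to observe that these two lattices in $\mathcal{H}^j$ are mutually dual: the harmonic projection of $H^j(\Abull)_{\free}$ and the harmonic projection of $H^j(\widehat\Abull)_{\free}$ are dual lattices in $\mathcal{H}^j$ with respect to the induced inner product. This is essentially the statement that Poincaré–Lefschetz duality between $H^j$ and $H^{n-j}$ of the dual complex is realized, at the level of harmonic forms, by the pairing $\langle \cdot, \cdot\rangle$ on $\mathcal{H}^j$ — concretely, $H^j(A^\ast;\Z)_{\free}$ and $H^j(\widehat{A^\ast};\Z)_{\free}$ are each other's $\Z$-duals via the cup/evaluation pairing, and this pairing agrees with the metric pairing on $\mathcal{H}^j$. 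Since the covolume of a lattice and that of its dual are reciprocal, $\widehat R^j \cdot R^j = 1$.

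The main obstacle is bookkeeping rather than conceptual: one must be careful that the dual \emph{metric} on $\widehat{A^j} = \Hom(A^j,\Z)$, the metric-induced identification $\widehat{A^j}_\R \cong A^j_\R$, and the combinatorial transpose of the differentials are all compatible, so that the harmonic subspace literally coincides and the lattice from the dual complex is literally the $\Z$-dual of the original one. Once that identification is set up cleanly, the reciprocity of covolumes of dual lattices finishes the argument; no delicate analysis is involved, since everything here is finite-dimensional linear algebra over $\Z$.
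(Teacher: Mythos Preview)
Your proposal is correct and follows essentially the same approach as the paper's proof: the evaluation pairing $A^j \times \widehat{A}^j \to \Z$ descends to a perfect pairing between $H^j(\Abull)_{\free}$ and $H^j(\widehat{\Abull})_{\free}$, and this pairing agrees with the inner product on the common harmonic subspace $\mathcal{H}^j$, so the two lattices are mutually dual and their covolumes are reciprocal. The paper compresses this into two sentences; you have unpacked the Hodge decomposition and the identification of transposes with adjoints, which is helpful bookkeeping but not a different idea. Two minor quibbles: your opening reference to \eqref{erasmus} is a red herring, since you never actually use it; and what you call ``Poincar\'e--Lefschetz duality'' is really just the algebraic duality coming from the universal coefficient theorem (no manifold is involved here).
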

\proof
In fact, the pairing between $A^j$ and $\widehat{A}^j$ induces a perfect
pairing between the cohomology groups of $\Abull \otimes \C$ and $\widehat{\Abull} \otimes \C$,  and the image of the cohomology of $\Abull$ and $\widehat{\Abull}$
projects to lattices in perfect pairing with respect to this duality. 
\qed 

We have now concluded the proof of Theorem \ref{Papprox}; let us describe an application:

\begin{cor}
Let $V$ be a compact $3$-manifold which fibers over the circle. Let $S$ be a fiber, $f:S \rightarrow S$
the monodromy map and $P_f$ the characteristic polynomial of the linear map induced by $f$ on $H_1 (S)$. The fibration over the circle induces a map $\pi_1 (V) \twoheadrightarrow \Z$. We denote by $V_N$ the corresponding $N$-fold covering of $V$. Then:
$$\lim_{N \rightarrow +\infty} \frac{\log |H_1 (V_N)_{\rm tors} |}{N} = \log M (P_f ).$$
\end{cor}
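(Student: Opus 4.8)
The plan is to deduce this corollary directly from Theorem \ref{Papprox} (the case $m=1$) applied to a suitable cell complex structure on $V$, together with the identification \eqref{L2torsionk} of the $\ell^2$-torsion of a cyclic cover with a Mahler measure. First I would fix a finite cell structure $K$ on $V$ and let $\widehat K$ be the infinite cyclic cover corresponding to the map $\pi_1(V) \twoheadrightarrow \Z$ induced by the fibration. Since $V$ fibers over the circle with fiber $S$, this cover is homotopy equivalent to $S$; in particular $\widehat K$ has finitely generated homology, and the deck transformation acts on $H_*(S)$. The key computation is then that $\widehat K$ is $\ell^2$-acyclic (equivalently, $\det \Delta_j(z)$ is not identically zero on $S^1$) precisely because $\det(zI - f_*)$, the characteristic polynomial of the monodromy on $H_*(S)$, is a nonzero polynomial; more precisely the chain complex of $\widehat K$ as a complex of $\Z[t^{\pm 1}]$-modules has all homology modules that are torsion over $\Z[t^{\pm 1}]$.

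Next I would invoke Theorem \ref{Papprox}: for $m=1$ and $\widehat K$ being $\ell^2$-acyclic, $\frac{\log T(K_N)}{N} \to -\tau^{(2)}(\widehat K; \Z)$ and $\frac{\log R^i(K_N)}{N}\to 0$, where $V_N = K_N$ is the $N$-fold cyclic cover. Since the regulators contribute negligibly and $\dim H_1(V_N,\C)$ is bounded (by \eqref{manman} with $m=1$, so by a constant), the torsion in $H_1(V_N)$ (equivalently $H^2(V_N)$ by Poincaré duality for the compact $3$-manifold, or $H^1$ up to controlled error) grows at the rate given by the appropriate piece of $-\tau^{(2)}$. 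The remaining step is to identify $-\tau^{(2)}(\widehat K;\Z) = \log M(P_f)$. This is the same computation underlying \eqref{L2torsionk}: writing the cellular chain complex of $\widehat K$ over $\Z[t^{\pm1}]$, the alternating product of Laplacian determinants $\prod_j (\det \Delta_j(z))^{j(-1)^{j+1}}$, integrated over $S^1$, collapses — via the identities \eqref{RT}, \eqref{dLap} and the fact that over the PID $\Z[t^{\pm 1}]$ torsion homology is governed by the orders of the homology modules — to $\log M\big(\prod_j \det(zI - f_*|_{H_j(S)})^{(-1)^{j+1}}\big)$, and one checks that in the case at hand only the $H_1(S)$ term survives (the $H_0$ term is $z-1$, of Mahler measure $1$; the $H_2$ term likewise for a fiber $S$ with boundary, or dually contributes the same as $H_0$). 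Hence $-\tau^{(2)} = \log M(\det(zI - f_*|_{H_1(S)})) = \log M(P_f)$.

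One subtlety to address carefully is the bookkeeping of which cohomological degree carries the torsion and the role of Poincaré duality: Theorem \ref{Papprox} controls $T(K_N) = \prod^* |H^i(K_N)_{\tors}|$, the alternating product over all degrees, not $|H_1|$ alone. For a compact $3$-manifold $V_N$, the universal coefficient theorem and Poincaré--Lefschetz duality relate $H^2(V_N)_{\tors} \cong H_1(V_N)_{\tors}$, while $H^0, H^1$ and $H^3$ are handled separately — $H^0$ is free, $H^1$ is torsion-free, $H^3 \cong H_0$ is small — so that the alternating product is, up to bounded factors, exactly $|H_1(V_N)_{\tors}|^{\pm 1}$. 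Combined with \eqref{rconv} this gives the stated limit with the correct sign.

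The main obstacle I anticipate is precisely this last reconciliation: making sure the alternating product $T(K_N)$ isolates $|H_1(V_N)_{\tors}|$ cleanly rather than some combination, and that the $\ell^2$-torsion $-\tau^{(2)}$ indeed equals $\log M(P_f)$ on the nose rather than a product of Mahler measures of characteristic polynomials in several degrees. Both points hinge on the low-dimensionality of $V$ (so few degrees contribute) and on standard facts about Reidemeister torsion of fibered $3$-manifolds — essentially the Milnor--Turaev formula identifying the Reidemeister torsion with the Alexander-type invariant $\det(tI - f_*)$ — which I would cite rather than reprove. Modulo these identifications, the corollary is an immediate specialization of Theorem \ref{Papprox}.
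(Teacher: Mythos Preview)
Your proposal is correct and follows essentially the same route as the paper: verify $\ell^2$-acyclicity of the infinite cyclic cover, apply Theorem \ref{Papprox} with $m=1$, and identify $-\tau^{(2)}$ with $\log M(P_f)$ via the Alexander-polynomial computation \eqref{L2torsionk}. The paper's proof is terser --- it cites L\"uck's theorem for $\ell^2$-acyclicity rather than arguing via $\widehat K \simeq S$, and it suppresses entirely the degree bookkeeping you flag --- but the architecture is the same. Your explicit check that $T(K_N)$ reduces to $|H_1(V_N)_{\tors}|$ (using that $H^0,H^1,H^3$ of a compact $3$-manifold are torsion-free) is a point the paper leaves implicit, so your version is in fact more complete on that step.
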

\proof It is a theorem of L\"uck \cite[Theorem 1.40]{Luck} that the infinite cyclic cover $\widehat{V}$ associated
to $\pi_1 (V) \twoheadrightarrow \Z$ is $\ell^2$-acyclic. We may thus apply Theorem \ref{Papprox}. Here - as in the case of knots complements - the $\ell^2$-torsion of $\widehat{V}$ is equal to $-\log M (\Delta)$
where $\Delta$ - the natural generalisation of the Alexander polynomial - is equal to $P_f$. This proves
the corollary.
\qed

\subsection{Asymptotics of torsion.}  \label{as-co-to}
We now work with general $\Lambda$.  It is possible to define
the combinatorial $\ell^2$-torsion $\tau^{(2)}(\widehat{K};\Lambda)$ of $\widehat{K}$ in a manner generalizing
\eqref{taudef} in the case $\Lambda =\Z$. 

\medskip

\noindent
{\em Question.} Supposing $\widehat{K}$ has trivial $\ell^2$-homology; under what circumstances does the sequence
$$\frac{\log T(K_N)}{[\Gamma : \Gamma_N]} $$
 converge to $-\tau^{(2)} (\widehat{K} ; \Lambda )$ as $N \rightarrow +\infty$ ?

 \medskip 
 
 This is true if $\widehat{K}$ is ``strongly acyclic,'' in that the eigenvalues of each $\Delta_j$ on each $K_N$
 are uniformly separated from zero. 
 We do not know any example where this is so besides those already discussed.

\section{Examples} 
\label{examples}

\subsection{Existence of strongly acyclic bundles.} \label{SAexist}

Let notation be as in Theorem \ref{main}.  We shall prove that
strongly acyclic bundles {\em always exist when $\delta = 1$.}

Let $\mathbf{T} \subset \mathbf{G}$ be a maximal torus. Let $E$ be a Galois extension of $\Q$ splitting $\mathbf{T}$.  Let $\mathbf{T}_E = \mathbf{T} \times_{\Q} E$, and let $X^*$ resp. $X^*_+$
be the character lattice of $\mathbf{T}_E$, resp. the dominant characters. 
For each $x \in X^*_+$ there is associated a unique irreducible representation of $\mathbf{G} \times_{\Q} E$, denoted $\rho_x$, with highest weight $x$.  There is an associated homomorphism
$$  \mathbf{G} \rightarrow \mathrm{Res}_{E/\Q} \GL(W_x) \hookrightarrow \GL(\mathrm{Res}_{E/\Q} W_x),$$
where, by $\mathrm{Res}_{E/\Q} W_x$, we mean $W_x$ considered as a $\Q$-vector space. 

Let $\tilde{\rho}_x$ be this representation of $\mathbf{G}$ on $\tilde{W}_x : = \mathrm{Res}_{E/\mathbb{Q}} W_x$. 
Then $\tilde{W}_x \otimes_{\Q} \C$ decomposes into irreducible $\mathbf{G}(\C)$-representations; their highest weights are obtained from $x$ by applying the various embeddings $E \hookrightarrow \C$.

We claim that, so long as $x \in X^*_+$ avoids a finite union of proper hyperplanes, any stable $\Z$-lattice
in $\tilde{W}_x$ will be strongly acyclic.

  In fact, 
the complexified Cartan involution induces a certain $W$-coset 
 $\mathcal{W} \subset \mathrm{Aut}(X^*(\T_{\C}))$: Pick any Cartan-stable maximal  torus $\mathbf{T}'$ of $\mathbf{G}$ defined over $\R$;
the complexified Cartan involution induces an automorphism of $X^*(\mathbf{T}'_{\C})$, 
the latter is identified with $X^*(\mathbf{T}_{\C})$, uniquely up to the action of a Weyl element.
According to the Lemma below, each $\omega \in \mathcal{W}$ acts nontrivially.
Now, by \eqref{SA}, we may take the ``finite union of hyperplanes'' to be the preimages
of the $\{\mathrm{fix}(\omega): \omega \in \mathcal{W} \}$ under the various maps $X^* \rightarrow X^*(\mathbf{T}_{\C})$
induced by the various embeddings $E \hookrightarrow \C$.

    \begin{lem*} Suppose that $\delta = 1$.  Let $\theta: \mathfrak{g} \rightarrow \mathfrak{g}$ be
    (the complexification of) a Cartan involution. Let $\mathfrak{t}$ be any $\theta$-stable 
    Cartan subalgebra. Then the action of $\theta$ on $\mathfrak{t}$ does not coincide with any Weyl element. In particular, $\theta$ acts nontrivially on the set of isomorphism
    classes of irreducible $\mathfrak{g}$-modules. 
\end{lem*}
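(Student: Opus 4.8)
The claim is that when $\delta(\mathfrak g)=1$, a complexified Cartan involution $\theta$ acting on any $\theta$-stable Cartan subalgebra $\mathfrak t$ is never realized by a Weyl group element, and hence $\theta$ moves some isomorphism class of irreducible $\mathfrak g$-module. The plan is first to reduce to the ``fundamental'' Cartan subalgebra, then to analyze the trace of $\theta$ there, and finally to contrast with the trace of any Weyl element.

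First I would observe that any two $\theta$-stable Cartan subalgebras of $\mathfrak g$ are conjugate by an element of (the adjoint group of) $\mathfrak g$ that commutes with $\theta$ — more precisely, they become conjugate after complexification, and the conjugating element can be chosen to intertwine the two restrictions of $\theta$ up to a Weyl element; see the discussion of fundamental Cartan subalgebras in \cite[Chapter V]{Knapp}. So whether ``$\theta|_{\mathfrak t}$ is a Weyl element'' holds is independent of the choice of $\theta$-stable $\mathfrak t$, and I may take $\mathfrak t = \mathfrak t_U = \mathfrak b \oplus \mathfrak a_0$ to be the fundamental Cartan subalgebra in the notation of \S\ref{roots}. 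On this Cartan, $\theta$ acts as $+1$ on the compact part $\mathfrak b$ and as $-1$ on $\mathfrak a_0$; since $\delta(\mathfrak g) = \dim_{\mathbb C}\mathfrak a_0 = 1$, the involution $\theta|_{\mathfrak t}$ has trace $\dim\mathfrak t - 2$, i.e. it is $-1$ on a one-dimensional subspace and $+1$ elsewhere — a reflection-like involution but with a single $-1$ eigenvalue.

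Next I would compare with Weyl elements. An element $w\in W$ with $w|_{\mathfrak t} = \theta|_{\mathfrak t}$ would have exactly one eigenvalue $-1$ and the rest $+1$; an orthogonal transformation of a Euclidean space (with respect to the Killing form) whose eigenvalues are $+1$ with multiplicity $\dim\mathfrak t - 1$ and $-1$ with multiplicity $1$ is precisely a reflection in a hyperplane. A reflection lies in $W$ if and only if it is the reflection $s_\alpha$ in some root $\alpha$. So the assertion reduces to: the fixed hyperplane $\mathfrak b$ of $\theta|_{\mathfrak t}$ is \emph{not} the root hyperplane $\alpha^\perp$ for any root $\alpha$ of $\mathfrak t_U$ in $\mathfrak u$. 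Equivalently, the $(-1)$-eigenline $\mathfrak a_0$ is not spanned by a coroot. This I would check directly: a coroot is an element of $\mathfrak t_U$ fixed by all reflections orthogonal to it, hence lies in the span of $\mathfrak b$-related compact roots' coweight data; more concretely, $\mathfrak a_0 \subset \mathfrak p$ while every coroot $\alpha^\vee$ has the property that $[\mathfrak g_\alpha,\mathfrak g_{-\alpha}]$ together with the root spaces generates an $\mathfrak{sl}_2$ whose semisimple part would force, were $\mathfrak a_0 = \mathbb C\alpha^\vee$, the restricted root system on $\mathfrak a_{0\mathbb R}$ to consist of multiples of a \emph{single} root with $\theta$ acting as $-1$; one then inspects the two possible Lie algebra types with $\delta = 1$, namely $\mathfrak{sl}_3$ and $\mathfrak{so}_{p,q}$ ($pq$ odd), and verifies by the explicit root data in \S\ref{Ghyp} and \S\ref{SL3} that $\alpha_0 = \varepsilon_1$ (for $\mathfrak{so}$) resp. $\alpha_0 = \tfrac12(\varepsilon_1+\varepsilon_2-2\varepsilon_3)$ (for $\mathfrak{sl}_3$) is not a coroot direction — in each case the hyperplane $\mathfrak b$ is not a reflection hyperplane of $W_U$.

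Finally, for the ``in particular'': if $\theta$ acted trivially on isomorphism classes of irreducible $\mathfrak g$-modules, then for every dominant weight $\lambda$ the module with highest weight $\lambda$ would be $\theta$-self-dual, which (tracking highest weights through $\theta$ and through the longest Weyl element $w_0$) forces $\theta|_{\mathfrak t}$ to agree with $-w_0$, hence to be an element of $W$ composed with $-1$ — and in any case to be realized \emph{on the Weyl-chamber structure} by a Weyl element, contradicting the first part. So it suffices to have shown $\theta|_{\mathfrak t}\notin W$. The main obstacle is the clean execution of the reduction to the fundamental Cartan (making precise in what sense ``$\theta|_{\mathfrak t}$ is a Weyl element'' is independent of the choice of $\mathfrak t$) and the case-check that $\mathfrak a_0$ is not a coroot line; both are routine given \cite{Knapp} and the explicit data already recorded in \S\ref{Ghyp}, \S\ref{SL3}, so no genuinely new input is needed.
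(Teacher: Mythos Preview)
Your approach is correct but takes a different route from the paper's. You reduce to the fundamental Cartan $\mathfrak{t}_U = \mathfrak{b}\oplus\mathfrak{a}_0$, observe that $\theta|_{\mathfrak{t}_U}$ has a single $-1$ eigenvalue (since $\delta=1$), note that a reflection lies in $W$ only if it is some $s_\alpha$, and then verify case by case (using the explicit root data of \S\ref{Ghyp} and \S\ref{SL3}) that the line $\mathfrak{a}_0$ is not a coroot line. The paper instead argues directly that if $\theta|_{\mathfrak t}$ agreed with a Weyl element then, after composing with an inner automorphism, one would obtain an automorphism fixing $\mathfrak{t}$ pointwise and fixing each simple root space, hence the identity; so $\theta$ would be inner, contradicting the fact that the Cartan involution of $\mathfrak{sl}_3$ and of $\mathfrak{so}(p,q)$ with $pq$ odd is outer. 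The paper's argument is shorter and more conceptual (it never uses $\delta=1$ beyond the reduction to those two real forms), while yours makes concrete use of the $\delta=1$ hypothesis via the ``single reflection'' picture; both are valid.

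Two small points to clean up. First, your opening sentence ``any two $\theta$-stable Cartan subalgebras are conjugate by an element commuting with $\theta$'' is false as stated (there are several $K$-conjugacy classes of $\theta$-stable Cartans); what you actually use, and correctly justify in the ``more precisely'' clause, is that for any $g$ with $\mathrm{Ad}(g)\mathfrak t_1=\mathfrak t_2$ the transported involution $\mathrm{Ad}(g)\theta\mathrm{Ad}(g)^{-1}$ differs from $\theta$ by the inner automorphism $\mathrm{Ad}(g\,\theta(g)^{-1})$, which normalizes $\mathfrak t_2$ and hence acts on it by a Weyl element. Second, your ``in particular'' paragraph brings in $-w_0$ and duality, which is a red herring: the twist $V_\lambda^\theta$ has highest weight the dominant representative of $\theta(\lambda)$, and if this equals $\lambda$ for every dominant $\lambda$ then (by finiteness of $W$ and Zariski density of regular weights) there is a single $w\in W$ with $\theta|_{\mathfrak t^*}=w$, contradicting the first part. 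No $-w_0$ is needed.
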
 
 
\proof 
It suffices to check the case when $\mathfrak{g}_{\R}$ is isomorphic to one of the simple real Lie algebras $\mathfrak{sl}_3$ and $\mathfrak{so}(p,q)$  for $pq$ odd.   Now suppose the assertion is false;
we may multiply $\theta$ by an inner automorphism to obtain $\theta'$ which acts
trivially on $\mathfrak{t}$ {\em and} on every positive simple root space. 
Using the Killing form, we deduce that the action of $\theta'$ on every negative simple root space is trivial. Since these, together, generate $\mathfrak{g}$,  we conclude that $\theta'$ is trivial,
and $\theta$ was inner: a contradiction, for in both cases the Cartan involution is not inner. 
\qed

For example, if $\mathbf{G}$ is given by the units in a nine-dimensional division algebra  $D$
over $\Q$, then a congruence lattice in $\mathbf{G}$ is given by $\mathfrak{o}_D^{\times}$, the units in a maximal order of $D$; and a strongly acyclic $\mathfrak{o}_D^{\times}$-module
is given by $\mathfrak{o}_D$ itself.  Indeed, in this case -- taking for $E$ a cubic extension of $\Q$ that splits $D$ -- the representation ``$\tilde{W}_X \otimes \C$,'' notation as previous,
is the the sum of three standard representations of $\G(\C) \cong \SL_3(\C)$; moreover,
the isomorphism class of the standard representation is not fixed by the Cartan involution; it is interchanged with its dual.

\subsection{Hyperbolic $3$-manifolds  and the adjoint bundle.} \label{adjoint}
There is a particularly interesting strongly acyclic bundle that exists for certain arithmetic hyperbolic $3$-manifolds:

Let $B$ be a  quaternion division algebra over an imaginary quadratic field, and $\mathfrak{o}_B$
a maximal order.  Then $\mathfrak{o}_B^{\times}$ embeds into $\PGL_2(\C)$, and acts on $\mathbb{H}^3$; let $M$ be the quotient, and $\tilde{M}$ any covering of $M$. 

Let $L$ be the set of trace-zero elements in $\mathfrak{o}_B$, considered as a $\pi_1(M)$-module via conjugation. Then $H^1(\tilde{M}, L \otimes \C) = 0$. This is a consequence of Weil local rigidity. 
In fact, the module $L$ is strongly acyclic, and explicating the proof of the Theorem shows that:
\begin{equation} \label{thm-rough}   \frac{\log | H_1(\tilde{M}, L)|}{\vol(\tilde{M})}
\longrightarrow \frac{1}{6\pi}.    \end{equation} 

This has the following consequence: 
Although the defining representation $\pi_1(\tilde{M}) \rightarrow \SL_2$
does not deform over the complex numbers, it does deform {\em modulo $p$} for many $p$:
indeed, if $p$ divides the order of $H_1(\pi_1(M), L)$, it means precisely that
there is a nontrivial map
$$\pi_1(M) \rightarrow \SL_2(\mathbb{F}_p[t]/t^2).$$ 

 It would be interesting if the existence of ``many'' such quotients
shed any light on the conjectural failure of property $\tau$; cf. \cite{LubotzkyWeiss}.

 {\em Remark.} For {\em any} compact
  hyperbolic manifold $M$, we may consider the three-dimensional flat complex bundle
  defined by composing
$\rho: \pi_1(M) \rightarrow \SL_2(\C)$ defining the hyperbolic structure,  with the adjoint action
of $\SL_2(\C)$. Again, the corresponding local system is acyclic. However, in general, this representation {\em does not have an integral structure}.  
It can sometimes be defined over the ring of integers of a number field larger than $\Q$;
but the resulting local system of $\Z$-modules is not {\em strongly} acyclic.

\subsection{Lifting torsion:  hyperbolic $n$-manifolds, $n > 3$.} \label{pullbacks}
Combined with geometric techniques, one may use Theorem \ref{main}
to obtain torsion even in certain (nonarithmetic, nonexhaustive) sequences
with $\delta = 0$. 

Let $F$ be a totally real number field and $(V,q)$ be an anisotropic quadratic space over $F$, of signature 
$(n,1)$ over one infinite place and definite over all other infinite places. Let $U \subset V$ be a $F$-rational
non-degenerate subspace of signature $(3,1)$ over one place at infinity. Let $\sigma$ denote the involutive linear map from $V$ to itself which acts as the identity on $U$ and as minus the identity on 
$U^{\perp}$.

It is proved in \cite{BHW} that there exists a -- possibly non-congruence -- lattice $\Gamma \subset 
\SO (q,F)$ such that $\Gamma$ retracts onto $\Gamma^{\sigma}$. We denote by 
$$r : \Gamma \rightarrow \Gamma^{\sigma}$$
the retraction.

The group $\Gamma^{\sigma}$ is a lattice in $\SO (U) \cong \SO_{3,1}$. We let $(L, \rho)$ be a strongly acyclic
$\Z [\Gamma^{\sigma} ]$-module. Composing $\rho$ with the retraction $r$ turns $L$ into a 
$\Z [\Gamma ]$-module that we still denote $\tilde{L}$. It is not arithmetic. 

\begin{prop*} 
1) There exists a decreasing sequence of finite index subgroups $\Gamma_N \leq \Gamma$ so that $|H_1 (\Gamma_N , \tilde{L})_{\rm tors} |$ grows exponentially with $[\Gamma : \Gamma_N]$.

2) There exists a decreasing sequence of finite index subgroups $\Gamma_N \leq \Gamma$  
such that $\cap_N \Gamma_N = \{ 1 \}$ and
$$ \liminf \frac{\log |H_1 (\Gamma_N , \tilde{L})_{\rm tors} |  } {    {\rm vol} (\Gamma_N^{\sigma} \backslash \H^3 )} > 0.$$
\end{prop*}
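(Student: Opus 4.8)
\emph{Proof plan.} The plan is to transport the exponential growth of torsion furnished by Theorem~\ref{main} on covers of the totally geodesic hyperbolic $3$-manifold $\Gamma^\sigma\backslash\H^3$ up to covers of $\Gamma\backslash\H^n$, using that $\Gamma^\sigma$ is a retract of $\Gamma$ and that $\tilde L=r^{*}L$ by construction. We may assume $\Gamma$ --- hence $\Gamma^\sigma$ and $\ker r$ --- is torsion free (otherwise pass to a finite-index subgroup). First fix a decreasing tower of finite-index subgroups $\Gamma^\sigma_N\leqslant\Gamma^\sigma$ for which the injectivity radius of $\Gamma^\sigma_N\backslash\H^3$ tends to infinity: such a tower exists because $\Gamma^\sigma$ (a closed hyperbolic $3$-manifold group) is residually finite and the length spectrum of $\Gamma^\sigma\backslash\H^3$ is discrete, so for each $N$ one may choose a finite-index normal $\Delta^\sigma_N\trianglelefteq\Gamma^\sigma$ containing no (hence, being normal, no conjugate of any) $\gamma\neq 1$ of translation length $\leqslant N$, and set $\Gamma^\sigma_N=\Delta^\sigma_1\cap\dots\cap\Delta^\sigma_N$. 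Since $\delta(\SO_{3,1})=1$ and $\dim\H^3=3$, so that the degree $\tfrac{\dim\H^{3}-1}{2}=1$ is the one producing torsion, applying Theorem~\ref{approxthm} to this tower --- legitimate by the Remark after Theorem~\ref{main}, with the ``compatible'' module for every $\Gamma^\sigma_N$ being $L$ itself --- together with the polynomial bound on $|H_0(\Gamma^\sigma_N,L)_{\mathrm{tors}}|$ of \S\ref{H0bound} and the bound on $|H_2(\Gamma^\sigma_N,L)_{\mathrm{tors}}|$ coming from Poincar\'e duality and the exact sequence \eqref{torso} (both of which use only cocompactness and torsion-freeness of the lattice), gives
\[
\frac{\log|H_1(\Gamma^\sigma_N,L)_{\mathrm{tors}}|}{\vol(\Gamma^\sigma_N\backslash\H^3)}\ \longrightarrow\ c_L
\]
for a constant $c_L>0$, the positivity being exactly the content of \S\ref{Cpositive}.

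The homological mechanism is a general remark on retracts. Write $r:\Gamma\twoheadrightarrow\Gamma^\sigma$ for the retraction, so $\Gamma=\ker r\rtimes\Gamma^\sigma$. Let $\Gamma'\leqslant\Gamma$ be any subgroup with $r(\Gamma')\subseteq\Gamma'$; then $r$ restricts to a surjection $\Gamma'\twoheadrightarrow\Lambda':=\Gamma'\cap\Gamma^\sigma$ admitting the inclusion of $\Lambda'$ as a section, and $\tilde L|_{\Gamma'}=(r|_{\Gamma'})^{*}(L|_{\Lambda'})$. Functoriality of group homology for the pairs $(\Lambda'\hookrightarrow\Gamma',\,\mathrm{id})$ and $(r|_{\Gamma'}:\Gamma'\to\Lambda',\,\mathrm{id})$ produces homomorphisms $H_{*}(\Lambda',L)\to H_{*}(\Gamma',\tilde L)\to H_{*}(\Lambda',L)$ whose composite is induced by $\mathrm{id}_{\Lambda'}$, hence is the identity; therefore $H_{*}(\Lambda',L)$ is a direct summand of $H_{*}(\Gamma',\tilde L)$ as an abelian group, and in particular $|H_1(\Gamma',\tilde L)_{\mathrm{tors}}|\geqslant|H_1(\Lambda',L)_{\mathrm{tors}}|$.

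For part~(1) take $\Gamma_N:=r^{-1}(\Gamma^\sigma_N)=\ker r\rtimes\Gamma^\sigma_N$; then $r(\Gamma_N)=\Gamma_N\cap\Gamma^\sigma=\Gamma^\sigma_N$, and $[\Gamma:\Gamma_N]=[\Gamma^\sigma:\Gamma^\sigma_N]$ since $r$ is surjective, so the mechanism above together with the first paragraph gives $|H_1(\Gamma_N,\tilde L)_{\mathrm{tors}}|\geqslant|H_1(\Gamma^\sigma_N,L)_{\mathrm{tors}}|\geqslant\exp\!\bigl(\tfrac{c_L}{2}[\Gamma:\Gamma_N]\bigr)$ for $N$ large, i.e.\ exponential growth in $[\Gamma:\Gamma_N]$. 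For part~(2) one must additionally exhaust $\ker r$: by residual finiteness of $\Gamma$ fix a decreasing chain of finite-index normal subgroups $\Delta_N\trianglelefteq\Gamma$ with $\bigcap_N\Delta_N=\{1\}$, put $K_N:=\Delta_N\cap\ker r\trianglelefteq\Gamma$ (finite index in $\ker r$, with $\bigcap_N K_N=\{1\}$), and set $\Gamma_N:=K_N\rtimes\Gamma^\sigma_N\leqslant\ker r\rtimes\Gamma^\sigma=\Gamma$ --- a subgroup, since $\Gamma^\sigma_N$ normalizes the normal subgroup $K_N$. This $\Gamma_N$ is decreasing of finite index, is $r$-invariant with $\Gamma^\sigma_N=\Gamma_N\cap\Gamma^\sigma$, and $\bigcap_N\Gamma_N=\bigl(\bigcap_N K_N\bigr)\rtimes\bigl(\bigcap_N\Gamma^\sigma_N\bigr)=\{1\}$ (the second factor trivial because the injectivity radii diverge and $\Gamma^\sigma$ is torsion free). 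Writing $\Gamma_N^\sigma:=\Gamma_N\cap\Gamma^\sigma$, which equals $\Gamma^\sigma_N$, so that $\vol(\Gamma_N^\sigma\backslash\H^3)=\vol(\Gamma^\sigma_N\backslash\H^3)$, the mechanism and the limit of the first paragraph yield
\[
\frac{\log|H_1(\Gamma_N,\tilde L)_{\mathrm{tors}}|}{\vol(\Gamma_N^\sigma\backslash\H^3)}\ \geqslant\ \frac{\log|H_1(\Gamma^\sigma_N,L)_{\mathrm{tors}}|}{\vol(\Gamma^\sigma_N\backslash\H^3)}\ \longrightarrow\ c_L\ >\ 0,
\]
so the $\liminf$ is $\geqslant c_L>0$, which is part~(2).

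Everything past Theorem~\ref{main} and the retraction of \cite{BHW} is formal: the one point worth flagging is that Theorem~\ref{main} must be invoked through Theorem~\ref{approxthm} and the $H_0,H_2$ estimates, and one should confirm that those estimates used only that $\Gamma^\sigma_N\backslash\H^3$ is closed and torsion free (this is clear) so that they apply to the possibly non-arithmetic $\Gamma^\sigma$. The remaining care --- and the main bookkeeping obstacle --- is the choice of $\Gamma_N$ in part~(2): one needs a subgroup of $\Gamma$ that simultaneously exhausts $\Gamma$ (so that $\bigcap_N\Gamma_N=\{1\}$) and meets $\Gamma^\sigma$ in exactly the prescribed good tower $\Gamma^\sigma_N$ (so that the denominator $\vol(\Gamma_N^\sigma\backslash\H^3)$ is under control), which is what the decomposition $\Gamma=\ker r\rtimes\Gamma^\sigma$ is used for; the splitting of homology under a group retraction with pulled-back coefficients is elementary.
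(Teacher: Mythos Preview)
Your argument is correct and follows the same idea as the paper: transport the torsion on the $\SO_{3,1}$ side to $\Gamma$ via the retraction $r$, using that $H_*(\Gamma^\sigma_N,L)$ is a direct summand of $H_*(\Gamma_N,\tilde L)$. For part~(1) your construction $\Gamma_N=r^{-1}(\Gamma^\sigma_N)$ is exactly the paper's.

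One inaccuracy to flag: your assertion that the $H_0$ bound of \S\ref{H0bound} ``uses only cocompactness and torsion-freeness'' is wrong --- that section works only for a specific family of congruence subgroups, and your $\Gamma^\sigma_N$ need not be congruence. Fortunately you do not need it: since $|H_0|,|H_2|\geq 1$, the alternating-sum identity \eqref{mainresult-sharp} (which comes from Theorem~\ref{approxthm} alone and requires only diverging injectivity radius) already gives $\liminf \log|H_1(\Gamma^\sigma_N,L)|/\vol(\Gamma^\sigma_N\backslash\H^3)\geq c_L>0$, and that is all either part of the Proposition demands. So drop the appeal to \S\ref{H0bound} and the claim that the limit (rather than the $\liminf$) exists.

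For part~(2) your construction differs from the paper's. The paper starts with an \emph{arbitrary} exhausting tower $\Gamma_N\leq\Gamma$ and invokes \cite{BHW} a second time to obtain, after passing to finite index, a retraction of $\Gamma_N$ onto $\Gamma_N\cap\Gamma^\sigma$; it then applies Theorem~\ref{main} to the resulting $\Gamma^\sigma_N$. You instead build $\Gamma_N=K_N\rtimes\Gamma^\sigma_N$ explicitly from the global splitting $\Gamma=\ker r\rtimes\Gamma^\sigma$, so that the restricted retraction is automatic. Your route is more elementary --- no second appeal to \cite{BHW} and no ``virtual'' retraction to upgrade --- at the cost of producing one specific tower rather than showing that any exhausting tower can be refined to work. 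Both yield the stated Proposition.
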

\proof
1) Theorem \ref{main} implies that $\Gamma^{\sigma}$ contains a decreasing sequence of finite index 
subgroups $\Gamma_N^{\sigma} \leq \Gamma^{\sigma}$ such that $|H_1 (\Gamma_N^{\sigma} , L)_{\rm
tors}|$ grows exponentially with the index $[\Gamma^{\sigma} : \Gamma_N^{\sigma}]$. 
Let $\Gamma_N$ be the preimage of $\Gamma_N^{\sigma}$ by the retraction map $r$. Then 
$$H_1 (\Gamma_N , \tilde{L} ) \twoheadrightarrow H_1 (\Gamma_N^{\sigma} , L) $$
and the conclusion follows.

2) Let $\Gamma_N \leq \Gamma$ be any decreasing sequence of finite index subgroups such
that $\cap_N \Gamma_N = \{ 1 \}$. It follows from \cite{BHW} that the map $r$ induces a
virtual retraction from $\Gamma_N$ onto $\Gamma_N^{\sigma} = \Gamma_N \cap \Gamma^{\sigma}$. Replacing each $\Gamma_N$
by a finite index subgroup we may thus assume that the retraction map $r$ induces a retraction
from $\Gamma_N$ onto $\Gamma_N^{\sigma}$. This yields injections:
$$H_1 (\Gamma^{\sigma}_N , L) \hookrightarrow H_1 (\Gamma_N , \tilde{L}).$$
Now $\cap_N \Gamma_N^{\sigma} = \{ 1 \}$ and $L$ is strongly acyclic. So that Theorem \ref{main} implies that 
$$\liminf \frac{ \log |H_1 (\Gamma_N^{\sigma} , L )_{\rm tors} | }{{\rm vol} (\Gamma_N^{\sigma} \backslash \H^3 )} >  0. $$
\qed

Note that in case 1), the intersection $\cap_N \Gamma_N$ is non trivial. It is equal to the kernel of $r$.
Case 2) is reminiscent from the results of Borel, Labesse and Schwermer \cite{BLS} who construct 
non-trivial cohomology classes for some series of examples of arithmetic groups as lifts from discrete 
series in a smaller group.

 \subsection{The hyperbolic plane; Shimura varieties.} 

In case $G= \SL_2 (\R)$, any torsion-free lattice $\Gamma \leqslant G$ has 
torsion-free cohomology; indeed, the homology of $\Gamma$ is identified
with the homology of the Riemann surface $\Gamma \backslash \mathbb{H}^2$.

More generally, it is \underline{believed} that the cohomology of Shimura varieties
(all of which have $\delta =0$, for example, $\G = \mathrm{Sp}_{n}(\R)$, or $\G = \mathrm{O}(2n, m)$, or $\G = U(n,m)$)
should have very little torsion -- precise statements in a related vein are proposed
in \cite{CalegariEmerton}. 

Evidence in this direction is provided by \cite{Boyer, Dimitrov}.

\subsection{Good groups} Following Serre \cite{Serre} we say that a group $G$ is {\it good}~\footnote{This terminology
was introduced by Serre in the course of an exercice. Unfortunately it has now become standard.} if the homomorphism of cohomology groups 
$H^n (\widehat{G} , M ) \rightarrow H^n (G,M)$
induced by the natural homomorphism $G \rightarrow \widehat{G}$ of $G$ to its profinite completion
$\widehat{G}$ is
an isomorphism for all $n$ and every finite $G$-module $M$.

Recall two simple facts, see \cite[Exercices 1 and 2 (b), p. 13]{Serre}. 
\begin{center}
{\it
\begin{enumerate}
\item Goodness is preserved by commensurability.
\item If $E/N$ is an extension of a good group $G$ by a good group $N$ such that the cohomology 
groups $H^q (N,M)$ are finite for all $q$ ($q \in {\Bbb N}$) and all finite $E$-modules  $M$, then $E$
is good.
\end{enumerate}}
\end{center}

If a lattice $\Gamma$ is good then the projective limit
$$\lim_{\substack{\longrightarrow \\ \Gamma '}} H^j ( \Gamma'  , \Z / n \Z )$$
over all its finite index subgroups $\Gamma'$ is always zero; in other terms,
given any class $\alpha \in H^j(\Gamma, M)$ there exists a finite index subgroup
$\Gamma_0 \leqslant \Gamma$ so that $\alpha | \Gamma_0$ is trivial.

Arithmetic lattices which satisfy the congruence subgroup property (CSP) are not good. On the
other hand Thurston has conjectured that lattices in $\SL_2 (\C)$ are commensurable with
the fundamental group of a $3$-manifold fibering over the circle. Such a group is an extension of
$\Z$ by either a free group or a surface group. Since these groups are good, it would follow from
Thurston's conjecture (and the above two simple facts) that lattices in $\SL_2 (\C)$ are good.

In \cite{Agol} Agol introduces a new residual condition (RFRS) under which he proves Thurston's conjecture. Commonly occurring groups which are at least virtually RFRS include surface groups
and subgroups of (abstract) right angled reflection groups. Since it follows from \cite{BHW} that 
arithmetic hyperbolic groups defined by a quadratic form are virtual subgroups of (abstract) right angled reflection groups, on gets~\footnote{A particular case is proved in \cite{GJZZ}.}:

\begin{center}
{\it Arithmetic lattices of $\SL_2 (\C)$ which are defined by a quadratic form are good.}
\end{center}

\subsection{Slow growth of coinvariants} \label{H0bound} 
In this section we show (in a special case)   -- in the notation of Theorem \ref{main} -- that  the growth
of the orders of $H_0(\Gamma_N, M)_{\tors}$ and $H_{\dim(S)-1}(\Gamma_N, M)_{\tors}$
is at most polynomial in the index.  This is compatible with Conjecture \ref{main}, 
although it is too easy a case to be serious evidence in that direction;
we include it largely for completeness. 
 
 We carry this out for a specific family of congruence subgroups;
it is very likely a similar result holds in greater generality.
 Suppose that $\G, M$ are as in that theorem, $\Gamma$ a congruence subgroup of $\G(\Q)$, and $\rho$ the representation of $\G$
 on $M \otimes \Q$.  We suppose that $\rho$ is {\em faithful}, that it has no trivial factors, 
 and that $\Gamma$ is Zariski-dense in $\G$.  These conditions guarantee that
 $H_0(\Gamma', M)$ is torsion for any finite index subgroup $\Gamma' \leqslant \Gamma$. 
 
  Consider
 the family of congruence subgroups $$\Gamma(N) := \Gamma \cap \rho^{-1}(1 + N  \ \End (M )),$$
 where we regard $1 + N  \ \End(M)$ as a (principal congruence) subgroup of $\GL(M)$. 
 Under these circumstances we show that:
 \begin{quote}The orders of the (finite) groups
 $H_0(\Gamma(N), M)$ and $H_{\dim(S)-1} (\Gamma(N), M)$ are bounded
 by a polynomial in $N$. 
 \end{quote}
 
 We shall prove the result concerning $H_0$, the result about $H_{\dim(S)-1}$ following by a dual argument.  Note that shrinking $\Gamma$ can only increase the size of $H_0$; 
we begin by replacing $\Gamma$ by a finite index subgroup so that its closure in $\G(\adele_f)$
 is a product group, i.e., of the form   $\prod_p K_p$, with $K_p \subset \G(\Q_p)$
 a compact subgroup.

Considering the exact sequence
$H_0(\Gamma (N),  M) \stackrel{\times \ell^r}{\longrightarrow} H_0(\Gamma (N), M) \rightarrow H_0(\Gamma (N),  M/\ell^r M)$
we see that 
\begin{equation} \label{product} 
| H_0(\Gamma (N) , M) | =  \prod_{\ell} | H_0(K_{\ell} [N],  M \otimes \Z_{\ell})|. 
\end{equation}
Here, $K_{\ell}[N] =  \rho(K_{\ell})  \cap 1 + N \End(M \otimes \Z_{\ell})$.
In particular, $K_{\ell}[N] = K_{\ell}[1]$ if $\ell $ does not divide $N$. Consequently, 
\begin{equation} \label{productI} 
| H_0(\Gamma (N) , M) | \leq   |H_0(\Gamma(1), M)| \cdot  \prod_{\ell | N} | H_0(K_{\ell} [N],  M \otimes \Z_{\ell})|. 
\end{equation}
Fix now a prime $\ell$ dividing $N$. 
Let $s$ be the $\ell$-valuation of $N$ if $\ell$ is odd;
if $\ell =2$, let $s = \max(3, \mbox{$2$-valuation of $N$})$. 
 Note that $K_{\ell} [N] \supseteq  K_{\ell}[\ell^s ]$.

Let $\mathfrak{g}$ be the Lie algebra of $\rho(\mathbf{G})$; 
we identify it with a subspace of $\mathrm{End}(M_{\Q})$, and, 
because $\rho$ has no invariant subspaces, 
  $\mathfrak{g}  M_{\Q} =  M_{\Q}$.    (Here we write $\mathfrak{g} M_{\Q}$
  for the image of the action map $\mathfrak{g} \times M_{\Q} \rightarrow M_{\Q}$;
  we use similar notation in what follows.)
Write $\mathfrak{g}_{\mathbb{Z}} = \mathfrak{g} \cap \End(M)$. Then 
the index of $[M: \mathfrak{g}_{\Z} M] $ is finite. 

Set  $\mathfrak{g}[\ell^s] = \mathfrak{g} \cap \ell^s \mathrm{End}(M)$ and $M_{\ell} = M \otimes \Z_{\ell}$.
Let $U_{\ell}$ be  the space 
 spanned by $\{ g v - v \; : \;  g \in K_{\ell}[\ell^s], v \in M_{\ell}\}$. We claim
 $$ U_{\ell} \supset \{X v \; : \; X \in \mathfrak{g} [\ell^s], v \in M_{\ell}\}.$$ 
 
Indeed, given $X \in \mathfrak{g} [\ell^s]$,  set $g = \exp(X) = \sum_{i \geq 0} \frac{X^i}{i!} \in GL(M_{\ell})$; 
the series converges in $\End(M_{\ell})$. One checks that in fact $g \in K_{\ell}[\ell^s]$.
Then, for $v \in M_{\ell}$, 
$$Xv = \sum_{i \geq 1} (-1)^i \frac{(g-1)^i v }{i}   = (g-1) \left( \sum_{i \geq 1}  (-1)^i\frac{(g-1)^{i-1}v}{i}  \right).$$

 Since $\frac{(g-1)^{i-1} v}{i} \in M_{\ell}$ for each $i \geq 1$, we see 
indeed that $X v \in U_{\ell}$.  Since $H_0(K_{\ell}[N], M) $ is isomorphic
to a quotient of $M/U_{\ell}$, we deduce
 $$ | H_0(\Gamma(N), M)| \leq |H_0(\Gamma(1), M)| \cdot   (8N)^{\mathrm{rank}(M)^2} \cdot  [M: \mathfrak{g}_{\Z} M].$$

\section{Questions}  \label{s:quest}

\subsection{Trivial coefficients.} 
For simplicity, we confine ourselves to hyperbolic $3$-manifolds and phrase our results in terms of homology, since in this case the interesting torsion is in $H_1$, which is simply the abelianized fundamental group. 

If one considers trivial coefficients, the Cheeger-M{\"u}ller theorem expresses the {\em product}:
$$| H_1(M, \Z)_{\tors}| . R $$
as a regularized Laplacian determinant; here $R$ is defined as follows: take $\omega_1, \dots, \omega_N$ an $L^2$-basis of harmonic $1$-forms, 
$\gamma_1, \dots, \gamma_N$ a basis for $H_1(M, \Z)$, and put
$R  := \det  \left(  \int_{\gamma_i} \omega_j  \right)_{1 \leq i,j \leq N}^{-2}.$

Thus, to understand whether or not the torsion grows, we need to understand whether or not the quantity $R$ can be very large- -- in particular, can it be exponentially large in the volume of $M$?
(Note, however, that even a complete understanding of $R$ would not settle the question of torsion growth; there remains the closely related and extremely difficult issue of small eigenvalues of the Laplacian). 

 In general, we see no reason that $R$ should be small. However,
{\em in the arithmetic case}, numerical experiments suggest that $H_1(M, \Z)_{\tors}$
is large whether or not the $H_1(M, \Q)$ is zero.    Moreover,  we saw in the combinatorial case (Lemma \ref{gaction})
that one may prove upper bounds for (combinatorial) regulators using the auxiliary structure of a group action; one may suspect that Hecke operators might play a similar role. Accordingly, we formulate:

\begin{conj} \label{regconj} 
Suppose that $M_k$ is a sequence of congruence arithmetic hyperbolic $3$-manifolds,
of injectivity radius approaching $\infty$. Then
$$ \lim_{k} \frac{ \log R(M_k) } {\vol(M_k)} = 0.$$ 
\end{conj}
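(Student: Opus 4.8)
\emph{Proof proposal.} The plan is to convert the statement into one about the covolume of an integral lattice, and then to separate an easy direction from a hard one. Fix $M=M_k$ and let $\Lambda=H_1(M,\Z)_{\free}$, viewed as a lattice in $H_1(M,\R)$ equipped with the metric dual to the $L^2$-metric on harmonic $1$-forms. Under the period isomorphism $v\mapsto(\int_v\omega_j)_j$ attached to an $L^2$-orthonormal basis $\omega_1,\dots,\omega_N$ of harmonic forms, this metric becomes the standard one on $\R^N$ and $\Lambda$ becomes the row lattice of the period matrix $P=(\int_{\gamma_i}\omega_j)$, so $R(M)=|\det P|^{-2}=\vol(\Lambda)^{-2}$; by Poincar\'e duality $\Lambda$ is dual to $H^1(M,\Z)_{\free}$ with its $L^2$-metric, hence also $R(M)=\vol(H^1(M,\Z)_{\free})^2$. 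Thus the conjecture asserts exactly that $\log\vol(\Lambda_k)=o(\vol M_k)$, and I would prove the two one-sided bounds separately. For the bookkeeping I would first invoke $b_1(M_k)=o(\vol M_k)$ (vanishing of the first $L^2$-Betti number of $\H^3$, via L\"uck-type approximation), so that by Hadamard's and Minkowski's inequalities it suffices to bound the successive minima of $\Lambda_k$ from above and below by $e^{o(\vol M_k)}$; since $b_1$ grows at most slowly in $\vol$ in a congruence tower, it in fact suffices to bound them by quantities polynomial in $\log\vol M_k$.

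\emph{The easy direction} is $\vol(\Lambda_k)\le e^{o(\vol M_k)}$, i.e.\ $R(M_k)\ge e^{-o(\vol M_k)}$. A Weitzenb\"ock computation on $\H^3$ gives, for harmonic $\omega$, the pointwise bound $\sup_M|\omega|\le C\|\omega\|_{L^2}$ with $C$ absolute (pass to the universal cover and use a mean-value inequality), so the dual norm of a loop $\gamma$ is at most $C\,\ell(\gamma)$. It then suffices to generate $H_1(M_k,\Z)$ by loops of length $e^{o(\vol M_k)}$, which follows from $\mathrm{diam}(M_k)=O(\log\vol M_k)$, a consequence of the uniform spectral gap (property $(\tau)$) for the congruence family. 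This half is essentially unconditional.

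\emph{The hard direction} is $\vol(\Lambda_k)\ge e^{-o(\vol M_k)}$, equivalently $\vol(H^1(M_k,\Z)_{\free})\le e^{o(\vol M_k)}$: a single integral cohomology class whose harmonic representative had exponentially small $L^2$-norm would already defeat it, and no purely geometric argument controls this. The approach I would try mirrors Lemma~\ref{gaction}, with the Hecke algebra playing the role of the finite group ring. Here $H^1(M_k,\Z)_{\free}$ is a faithful module over a commutative Hecke algebra $\mathbb{T}_k$, finite free over $\Z$; its $\C$-cohomology decomposes into $o(\vol M_k)$ Hecke eigenspaces, each of bounded dimension, and known Ramanujan-type bounds show each eigensystem is pinned down by polynomially many Hecke operators of polynomially bounded norm, so one can produce integral vectors of controlled size in each rational eigenspace. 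If, in addition, the idempotents cutting out these eigenspaces over $\Z$ had ``size'' $e^{o(\vol M_k)}$---that is, if the discriminant of $\mathbb{T}_k$, equivalently the congruence modules between distinct eigenforms, were subexponential in $\vol M_k$---then the argument of Lemma~\ref{gaction} would deliver the required lower bound on the regulator.

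\emph{The main obstacle} is precisely this last point. The denominator of the projector onto a $\Qbar$-eigensystem is its congruence number, and congruence numbers are intimately tied to the torsion in $H^*(M_k,\Z)$, which the companion Theorem~\ref{main} shows can grow exponentially. Hence the naive transcription of Lemma~\ref{gaction} breaks down, and a genuinely new idea is needed to disentangle ``bad'' integral structure (which may legitimately be large, as torsion) from the regulator (which should not be). Equivalently, via Cheeger--M\"uller one has $R(M_k)\cdot|H_1(M_k,\Z)_{\tors}|=\exp\bigl(\vol(M_k)/6\pi-\epsilon_{M_k}\bigr)$ with $\epsilon_{M_k}$ the correction coming from eigenvalues of $\Delta_1$ near $0$, so the conjecture would follow from the expected growth $|H_1(M_k,\Z)_{\tors}|=e^{(1/6\pi+o(1))\vol(M_k)}$ together with the absence of accumulating small eigenvalues---and, as emphasized throughout the paper, the small-eigenvalue problem is the crux. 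Absent an arithmetic input forcing periods of automorphic forms to be as small as the integral structure permits, I would expect only a conditional result: contingent on a quantitative form of property $(\tau)$ controlling the Hecke lattice, or on the trivial-coefficient case of the torsion-growth conjecture.
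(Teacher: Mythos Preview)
The statement you are addressing is labeled \emph{Conjecture} in the paper, and the paper offers no proof; immediately after stating it the authors write ``we do not know how to prove this even if, for example, we know that $\dim H_1(M_k,\C)=1$ for every $k$,'' and they go on to identify the core difficulty as producing an explicit generator of $H_1(M,\Z)$ rather than merely of $H_1(M,\Q)$. So there is no ``paper's own proof'' to compare against, and your proposal is, appropriately, not a proof but an outline that ends by conceding the argument is conditional.

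Your discussion is nonetheless well aligned with the paper's framing. The ``easy direction'' $R(M_k)\ge e^{-o(\vol M_k)}$ that you sketch (sup-norm bound on harmonic forms via injectivity radius, diameter bound via property $(\tau)$, Hadamard applied to a generating set of loops, together with $b_1=o(\vol)$) is a reasonable unconditional argument and is not in the paper; it is a genuine contribution to clarifying where the difficulty lies. For the ``hard direction'' you correctly locate the obstruction: your Hecke-idempotent strategy imitates Lemma~\ref{gaction}, but the denominators of Hecke projectors are congruence numbers, which are entangled with the very torsion the paper shows can grow exponentially. The paper says essentially the same thing more briefly: ``one may suspect that Hecke operators might play a similar role'' to the finite group action in Lemma~\ref{gaction}, without claiming to carry this out. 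Your Cheeger--M\"uller reformulation at the end also matches the paper's observation that small Laplace eigenvalues are the other face of the problem.

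In short: there is no gap to diagnose because you have not claimed a proof; your write-up is a cogent analysis of why Conjecture~\ref{regconj} is open, and it is consistent with---and in places more detailed than---the paper's own commentary.
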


To clarify the difficulty here, we point out that we do not know how to prove this even if, for example, we know that $\dim H_1(M_k, \C) = 1$ for every $k$. Rather, the difficulty is this:  if $\dim H_1(M, \Q) = 1$ it is usually rather easy to find an explicit element $\gamma \in \pi_1(M)$
whose image generates $H_1(\Q)$ {\em but it may not generate $H_1(M, \Z)$.} 
 (It is likely that this is related to the Gromov norm on $H_2$.)

As remarked, we  certainly do not conjecture this in the nonarithmetic case, and in fact are more inclined to think it false.  In a related vein, we suspect that the answer to the following question is YES, in contrast to the arithmetic case. 

\medskip 

{\em Question.  Do there exist   hyperbolic $3$-manifolds of arbitrarily large injectivity radius with torsion-free $H_1$?}

\subsection{Weight one modular forms, modulo $p$.}

Now suppose $G = \SL_2(\R)$. Curiously enough, even though $\rank(G) - \rank(K) = 0$
and there is no torsion in the homology of (torsion-free) lattices, 
there is a natural candidate for a phenomenon that ``mirrors'' the exponential growth of torsion. 
This was observed in the joint work of the second author with F. Calegari, cf. \cite{CV}. 
We explicate it only in the most concrete case:

Let $\pi: X_0(N) \rightarrow \Spec \Z[\frac{1}{6N}]$ be the modular curve of level $N$ --  regarded
as a $\Z[\frac{1}{6N}]$-scheme -- and let $\Omega = \Omega^1_{\mathscr{E}/X_0(N)}$
be the bundle of relative $1$-forms corresponding to the universal generalized elliptic curve;
let $\omega$ be its pull-back to $X_0(N)$ by means of the zero-section. 
Set $M_1(N) = \mathrm{R}^1\pi_* \omega$, which we identify (via its global sections) with
a finite rank module over $\Z[1/N]$.   
\medskip 

{\em Question.  Does the size $| M_1(N)_{\tors}|$ grow exponentially with $N$? }

\medskip

Suppose that $M_1(N) [p]$ is nonzero; the long exact sequence in cohomology
associated to the sequence of sheaves $\omega\stackrel{\times p}{\rightarrow} \omega \rightarrow 
\omega/p\omega$
shows that there is a  level $N$ weight $1$ modular form {\em modulo $p$} that fails to lift to characteristic zero.     
Parallel to analytic torsion: the size of $M_1(N)$ is related
to the determinant of a holomorphic Laplacian.

 \subsection{The $p$-part.}
 
 Throughout this paper, we have been concerned with the ``crude'' question
 of the size of torsion homology; we may also ask, more finely, about its group structure.  
 
 Notation as in Conjecture \ref{conjmain}; let $p$ be a prime. If $X$ is a finite abelian group, 
 we denote by $X_p$ the set of $x\in X$ so that $p^n x =0$ for some $n \geq 0$. 
  
  \medskip
{\em Question.  What can one say about limit $\lim \frac{\log  | H_j(\Gamma_N, M)_{p}  |}{[\Gamma: \Gamma_N]}$?}
\medskip
 
See  Silver and Williams \cite[Theorem 4.2]{SilverWilliams2} for cyclic covers. 
 The question may be of most interest when the $\Gamma_N$ are obtained by adding $p$-power level structure to $\Gamma_1$. In this case, one expects the asymptotic behavior of $H_j(\dots)_p$ to be related to the dimension of a suitable eigenvariety. We refer to the works of Calegari and Emerton \cite{CE,CE2} for further discussion of this.  These suggest that the torsion predicted by our conjecture \ref{conjmain} involves larger and larger sporadic primes.
  
In a different vein, we may ask:
\medskip 

{\em Question.
 What is the {\em distribution} of the isomorphism class of $H_i(\Gamma, M)_{p}$, when one varies $\Gamma$ through arithmetic subgroups? } 

\medskip 

This question is very vague, of course; one needs to be specific about the variation of $\Gamma$. 
In particular, it is desirable to ensure that the normalizer of each $\Gamma$ is as small
as possible, so that the picture is not clouded by extra automorphisms. 
For example, the normalizers of the standard subgroups $\Gamma_0(\mathfrak{p})$ of $\SL_2(\Z[i])$, 
where $\mathfrak{p}$ is a prime of $\Z[i]$, might be a suitable family. 

For example, let us suppose we have fixed a family $\{\Gamma_{\alpha}\}_{\alpha \in A}$ of 
arithmetic subgroups, and consider:
$$X_k := \mbox{fraction of $\alpha \in A$ for which $\dim H_i(M) \otimes \mathbb{F}_p =k$.}$$
If the distribution is governed by ``Cohen-Lenstra heuristics'' (see below), we expect
that $X_k$ might decay very rapidly as $k$ grows -- as $p^{-k^2}$. 
On the other hand, the existence of Hecke operators may cause a substantial deviation (this phenomena may be similar to the observed difference in eigenvalue spacings between arithmetic and nonarithmetic Fuchsian groups \cite{Sarnak}); one might perhaps expect that $X_k$ would decay rather 
as $p^{-k}$.  We do not know; for a certain model of (usually nonarithmetic) hyperbolic $3$-manifolds this question has been studied by Dunfield and Thurston \cite{DT}. The probability that
the homology group of a random $3$-manifold $H_1 (M , \mathbb{F}_p)$ is non-zero 
is of size roughly $p$. Again, because the series of $1/p$
over primes diverges, this suggests that the first homology of a random $3$-manifold is typically
finite, but is divisible by many primes, see \cite{Kowalski} for a quantified version of this.

{\em Remark.} Let $t \geq 0$. There is a unique probability distribution $\mu_t$ on isomorphism classes of 
finite abelian $p$-groups  -- the {\em Cohen-Lenstra distribution with parameter $t$},
cf. \cite{CL}  -- characterized in the following equivalent ways:
\begin{enumerate}
\item The distribution of the cokernel of a random map $\Z_p^{N+t} \rightarrow \Z_p^N$ ({\em random} according to the additive Haar measure on the space of such maps) approaches $\mu_t$, as $N \rightarrow \infty$. 
\item $\mu_t(A)$ is proportional to $|A|^{-t} |\Aut(A)|^{-1}$. 
\item Let $G$ be any finite abelian group; the expected number of homomorphisms from a $\mu_t$-random group into $G$ equals $|G|^{-t}$. 
\end{enumerate}

\bibliography{bibli}

\bibliographystyle{plain}

\end{document}